\documentclass[12pt]{amsart}
\usepackage{amsfonts,amsmath,amssymb,amsthm}
\allowdisplaybreaks[1]
\usepackage{tikz-cd}
\usepackage{tikz-3dplot}
\usepackage{subcaption}
\usepackage{extarrows}
\usepackage{enumerate}
\usepackage{bm}
\usepackage[a4paper, margin=1in]{geometry}
\usepackage{hyperref} 

\hypersetup{
    colorlinks=true,
    linkcolor=black,
    filecolor=magenta,      
    urlcolor=cyan,
    pdftitle={Your Document Title},
    bookmarks=true
}

\usepackage{dsfont}
\usepackage{cleveref} 

\crefname{theorem}{theorem}{theorems}
\Crefname{theorem}{Theorem}{Theorems}
\crefname{definition}{definition}{definitions}
\Crefname{definition}{Definition}{Definitions}

\newtheorem{theorem}{Theorem}[section]
\newtheorem{lemma}[theorem]{Lemma}
\newtheorem{proposition}[theorem]{Proposition}
\newtheorem{corollary}[theorem]{Corollary}
\theoremstyle{definition}
\newtheorem{definition}[theorem]{Definition}
\newtheorem{question}[theorem]{Question}
\newtheorem{example}[theorem]{Example}

\theoremstyle{remark}
\newtheorem{remark}[theorem]{Remark}

\begin{document}
\begin{sloppypar}
\title{Asymptotic behavior of modular representations over abelian $p$-groups}
\author{Cheng Meng}
\address{Cheng Meng\\ Yau Mathematical Sciences Center, Tsinghua University, Beijing 100084, China. \emph{Email:} {\rm cheng319000@mail.tsinghua.edu.cn}}

\subjclass[2020]{13D02, 20C20, 26A42, 60B99}
\date{\today}
\begin{abstract}
In this paper, we prove some results on the asymptotic behavior arising in modular representation theory over abelian $p$-groups. First, we embed the representation ring of a cyclic $p$-group into a real algebra of functions. Second, we calculate the asymptotic order of the dimension of the core of $n$-th tensor power of a direct sum of syzygies and cosyzygies of the trivial module, which is of the form $C\gamma^nn^\alpha$. This result leads to a negative answer to a question by Benson and Symonds, that is, the dimension of the core of $M^{\otimes n}$ for certain $\Omega$-algebraic module $M$ is not eventually recursive. Third, we give a systematic way of computing the core series of $\Omega$-algebraic modules. Finally, we show the existence of a transcendental core series, which comes from iterated syzygy modules of the trivial representation.
\end{abstract}
\maketitle
\tableofcontents
\section{Introduction}
This paper explores some asymptotic behavior of modular representations of finite abelian groups. We focus on the following aspects:
\begin{enumerate}
\item Limits in representation rings over cyclic groups;
\item Asymptotic formula for core-dimension of syzygies and cosyzygies of the trivial representation;
\item Computation of core series of $\Omega$-algebraic modules;
\item Algebraicity and transcendence of core series.
\end{enumerate}
We will study the case where the field has characteristic $p$, and the group is a finite abelian $p$-group.
\subsection{Cyclic case} The Green ring, or representation ring over a cyclic $p$-group $\mathbb{Z}/p^e\mathbb{Z}$ has been studied in \cite{Green}, where the decomposition of tensor products of indecomposable objects is given. We call this ring $\Gamma_e$, which depends only on $e$ and the characteristic. The formula for tensor products of a general $\mathbb{Z}/p^e\mathbb{Z}$-representation can be described using multiplication in $\Gamma_e$.

From another point of view, Han and Monsky studied in \cite{HanMonsky} the representation theory of the $k$-objects, which is a finitely generated $k[T]$-module annihilated by a power of $T$. They construct the tensor product of two $k$-objects and explore the structure of the representation ring. Their theory of $k$-objects is a foundation of computation of Hilbert-Kunz multiplicity in many cases, especially the diagonal case. Works following Han and Monsky's method can be found in \cite{HM2} \cite{HM3CSTZ}\cite{arxivAnalysis}\cite{HM4SMIR}\cite{HM5}.

It can be observed that Han-Monsky's representation ring is actually equal to a direct limit of Green rings $\Gamma_e$. So, Han and Monsky's formula for tensor product and Green's formula for tensor product are equivalent.

However, when we apply Han and Monsky's method to compute the Hilbert-Kunz multiplicity, some limit behaviors often arise from a sequence $\{V_e\}_e$ where $V_e \in \Gamma_e$. For example, we may want to consider when
$$e \to \ell(e)=\ell(V_e\otimes_{k[\mathbb{Z}/p^e\mathbb{Z}]}k)$$
has the form $\ell(e)\sim Cp^{e\alpha}$, and calculate the value of $C,\alpha$. We would also like to see if certain limit behavior passes from $\{V_e\},\{W_e\}$ to the sequence of tensor products $\{V_e\otimes_kW_e\}$.
In this sense, the study of the tensor product on a single $\Gamma_e$ is not sufficient; we need to check the compatibility of the tensor product for different $\Gamma_e$'s. This relies on the communication between $\Gamma_e$ and $\Gamma_{e'}$ for $e \neq e'$, including functors like the restriction, induction and inflation.

Now we introduce the main theorem in this part which explains the meaning of ``taking limit of representations in $\Gamma_e$". Define
$$D(t_1,t_2,t_3)=\lim_{e \to \infty}\frac{\ell(k[T_1,T_2]/(T_1^{\lceil t_1p^e \rceil},T_2^{\lceil t_2p^e \rceil},(T_1+T_2)^{\lceil t_3p^e\rceil}))}{p^{2e}},$$
which exists and is a continuous function $\mathbb{R}^3 \to \mathbb{R}$, see \Cref{3.4}. Let $\mathcal{F}^+$ be the space of functions which are locally constant outside $[0,1]$, increasing and concave on $[0,\infty)$, and vanish at $0$ with bounded two-sided derivatives, and $\mathcal{F}$ be the $\mathbb{R}$-linear space generated by $\mathcal{F}^+$. For $f_1,f_2 \in \mathcal{F}$, define their product to be the real function $[0,1] \to \mathbb{R}$:
$$f(t)=f_1*f_2(t)=\int_{[0,1^+]^2}D(t_1,t_2,t)d(-f'_1(t_1))d(-f'_2(t_2)).$$
Also, define
$$\alpha_{a,e}=\max\{\min\{t,a/p^e\},0\} \in \mathcal{F}.$$
\begin{theorem}
Under the above notations,
\begin{enumerate}
\item (\Cref{3.15}) $(\mathcal{F},+,*)$ is a commutative $\mathbb{R}$-algebra.
\item (\Cref{3.14}) let $\Gamma_e$ be the representation ring of $\mathbb{Z}/p^e\mathbb{Z}$, $V_i=k[g]/(g-1)^i$ be the $i$-dimensional indecomposable representation and $\delta_i=[V_i]$ be its class in $\Gamma_e$. Then the $\mathbb{Z}$-linear map $\Gamma_e \to \mathcal{F}$ which maps $\delta_a$ to $p^e\alpha_{a,e}$ is an embedding of rings.
\item (\Cref{3.19}) there is a norm $\|\|$ on $\mathcal{F}$ making it a normed algebra.
\end{enumerate}
\end{theorem}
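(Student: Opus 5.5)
The plan is to work through the three parts in the order (2), (1), (3). The embedding in (2) supplies the algebraic identities on a dense set, and continuity of $*$ then transfers them to all of $\mathcal{F}$.

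\textbf{Step 1: the ring map in (2).} First I will verify that $\delta_a\mapsto p^e\alpha_{a,e}$ is multiplicative. For $f=\alpha_{a,e}$, the function is piecewise linear with a single kink at $a/p^e$. Hence $-f'$ jumps by $1$ there, and $d(-f')$ is the Dirac mass $\delta_{a/p^e}$ (with the convention that the kink at $1$ lives at $1^+$). This gives
$$p^{2e}\,\alpha_{a,e}*\alpha_{b,e}(t)=p^{2e}D(a/p^e,b/p^e,t).$$
Next I use the homogeneity $D(p^{-e}x)=p^{-2e}D(x)$ and the identification of $D(a,b,c)$ at integer arguments with $\ell(k[T_1,T_2]/(T_1^a,T_2^b,(T_1+T_2)^c))$. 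The latter equals $\dim_k\big((V_a\otimes V_b)\otimes_{k[g]}k[g]/(g-1)^c\big)$, since $T_1+T_2$ is the action of $g\otimes g-1$ up to a unit. Writing $V_a\otimes V_b=\bigoplus_i V_{c_i}$ by Green's decomposition, this dimension is $\sum_i\min(c_i,c)$. Plugging in $c=tp^e$ gives $\sum_i p^e\alpha_{c_i,e}(t)$ in the piecewise-linear regime, and continuity of $D$ extends this to all real $t$. Injectivity is immediate because the functions $\alpha_{a,e}$, $1\le a\le p^e$, have kinks at distinct points and so are linearly independent. I also need compatibility across $e$, namely $p^e\alpha_{a,e}=p^{e+1}\alpha_{pa,e+1}/p$ under inflation. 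This lets me regard $\bigcup_e p^{-e}\Gamma_e\otimes\mathbb{R}$ as a subspace $\mathcal{P}$ of $\mathcal{F}$ on which $*$ is commutative, associative and unital (with unit $\alpha_{1,0}=\min(t,1)$), because $\Gamma_e$ is.

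\textbf{Step 2: closure and algebra axioms in (1).} Commutativity on all of $\mathcal{F}$ is immediate from the symmetry $D(t_1,t_2,t)=D(t_2,t_1,t)$. Bilinearity is clear from linearity in the measures. For closure I must show that $f_1*f_2\in\mathcal{F}$. For $f_i\in\mathcal{F}^+$, the measures $d(-f_i')$ are finite positive measures on $[0,1^+]$, so $f_1*f_2$ is a positive mixture of the functions $t\mapsto D(t_1,t_2,t)$. Each of these is in $\mathcal{F}^+$: it equals $\ell$-type functions $\sum\min(c_i,\cdot)$ in the limit, so it is concave, increasing, zero at $0$ and constant for $t\ge t_1+t_2$. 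I would need to check that the last property indeed makes it locally constant outside $[0,1]$, or else restrict to $[0,1]$ as in the definition. Mixtures preserve these properties and the derivative bounds.

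Associativity is the main obstacle. My approach is to approximate $f\in\mathcal{F}^+$ by elements of $\mathcal{P}$: take the piecewise-linear interpolation at the grid $p^{-e}\mathbb{Z}$, which is a nonnegative combination of the $\alpha_{a,e}$. The key estimate is that $*$ is continuous in a suitable topology. Concretely, I want $\|f*g\|_\infty\le C\,\|f\|\,\|g\|$, where $\|f\|$ is the total variation of $f'$ plus $|f'(0)|$. This should follow from $0\le D(t_1,t_2,t)\le \min(t_1,t_2,t)$-type Lipschitz bounds together with the fact that $D$ is Lipschitz. Since approximation in that norm is weak-$*$ convergence of $d(-f')$ with bounded mass, and $D$ is uniformly continuous on compacts, the products converge. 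Associativity then passes to the limit.

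\textbf{Step 3: the norm in (3).} I will define $\|f\|$ to be the infimum of $\mu_1(1)+\mu_2(1)$ over decompositions $f=g_1-g_2$ with $g_i\in\mathcal{F}^+$, where $\mu_i(1)=g_i'(0^+)$ is the total mass of $d(-g_i')$ (essentially $g_i'(0)$). The unit has norm $1$. Submultiplicativity reduces to the positive case: for $g,h\in\mathcal{F}^+$, $(g*h)'(0^+)\le g'(0)h'(0)$. This holds because $\partial_tD(t_1,t_2,0^+)=1$, i.e.\ $D(t_1,t_2,t)\le t$, so the mass of $g*h$ is $\int\!\int 1\,d\mu_g\,d\mu_h$. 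It remains to check that this is a norm rather than a seminorm. If $\|f\|=0$, then $f$ is a limit of differences of functions with small slopes, which forces $f'\equiv0$ and hence $f=0$.
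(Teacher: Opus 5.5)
\textbf{Main gap: Step 1 does not prove that $\delta_a\mapsto p^e\alpha_{a,e}$ is multiplicative.} This also undermines part (1), because your associativity argument needs $*$ to be associative on each $\mathcal{F}_e$ before you pass to limits.

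Your grid computation only shows
$p^{2e}\,\alpha_{a,e}*\alpha_{b,e}(t)=D(a,b,p^et)=\sum_i\min(c_i,p^et)$
for $t\in p^{-e}\mathbb{Z}$, that is, when the third argument of $D$ is an integer. The step ``continuity of $D$ extends this to all real $t$'' is invalid, because $p^{-e}\mathbb{Z}$ is discrete. Continuity plus concavity in $t$ only tells you that $D(a,b,\cdot)$ lies on or above the chord on each $[c,c+1]$.

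That $D(a,b,s)$ is exactly linear in $s$ between consecutive integers is the real content of the embedding. It needs an input your proposal never uses: for $q=p^{e'}$,
$V_{aq}\otimes V_{bq}\cong\bigoplus_c V_{cq}^{\oplus qB(a,b,c)}$.
The paper proves Lemma 3.13 this way. It writes $V_{aq}=\operatorname{Ind}^G_H V_a$ with $G=\mathbb{Z}/p^eq$ and $H=\mathbb{Z}/p^e$, then uses the projection formula and $\operatorname{Res}^H_G\operatorname{Ind}^G_H V_b\cong V_b^{\oplus q}$. Combined with homogeneity, this gives
$D(a,b,c'/q)=q^{-2}D(aq,bq,c')=\sum_c B(a,b,c)\min(c,c'/q)$
for every integer $c'$. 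That is the identity on the dense set $\mathbb{Z}[1/p]$, and only at that point does continuity finish the argument.

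Your remark $p^e\alpha_{a,e}=p^{e+1}\alpha_{pa,e+1}/p$ is the functional shadow of exactly this compatibility. It corresponds to $\operatorname{Ind}$, not to inflation. You use it only to build $\mathcal{P}$, never to prove multiplicativity.

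\textbf{Smaller errors, all fixable.}

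\emph{The ``up to a unit'' claim.} The element $g\otimes g-1$ acts as $T_1+T_2+T_1T_2$. This is not a unit multiple of $T_1+T_2$ in $k[T_1,T_2]/(T_1^a,T_2^b)$: modulo $T_1+T_2$ it becomes $-T_1^2\neq 0$ once $\min(a,b)\ge 3$. The equality of lengths you need is still true, by the automorphism argument of Lemma 2.11.

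\emph{The claimed unit.} The function $\alpha_{1,0}=\min(t,1)$ is not a unit. Since $D(1,t_2,t)=D(t_2,t,1)=t_2t$ on $[0,1]^2$, you get $\alpha_{1,0}*f=f(1)\,\alpha_{1,0}$. So $\alpha_{1,0}$ is an idempotent, the normalized regular representation. The units $\varphi_e(\delta_1)=\min(p^et,1)$ of the $Z_e$ change with $e$. The statement does not claim a unit, so simply drop this, including ``the unit has norm $1$''.

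\emph{The derivative at $0$ in Step 3.} In fact $\partial_t D(t_1,t_2,0^+)=\min\{t_1,t_2\}$, not $1$. Your inequality $(g*h)'(0^+)\le g'(0^+)h'(0^+)$ still holds because $\min\{t_1,t_2\}\le 1$. So your slope-at-$0$ norm does make $\mathcal{F}$ a normed algebra, and it is a legitimate alternative to the paper's $\inf\{f_1(1)+f_2(1)\}$. The paper's norm is weaker than yours, since $f_i(1)\le f_i'(0^+)$. Its advantage is that it is exactly multiplicative on $\mathcal{F}^+$, because $D(t_1,t_2,1)=t_1t_2$.

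\emph{Constancy on $[1,\infty)$.} The point you left open follows from $(T_1+T_2)^q=T_1^q+T_2^q$. This gives $D(t_1,t_2,t)=t_1t_2$ for $t\ge 1$ and $t_1,t_2\in[0,1]$.
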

Here the map $\Gamma_e\to \mathcal{F}$ is compatible with the map $\frac{1}{p}\operatorname{Ind}:p\Gamma_e \to \Gamma_{e+1}$.

This main theorem explains the meaning of ``limits of $\{V_e\}$, where $V_e \in \mathbb{R}\otimes_\mathbb{Z}\Gamma_e$": we embed the class of $V_e$ into $\mathcal{F}$ and view it as a real function, and then take pointwise limit. The main theorem says we can take tensor product of these limits which is given by an integral transform.
\subsection{Asymptotic formula for core dimensions}
The idea behind the construction of $\mathcal{F}$ is to find a sequence of indecomposable objects parametrized by a countable parameter $n$ and find their asymptotic distribution with respect to $n$. For example, the indecomposable $k$-objects are exactly $k[T]/(T^n)$, which are parametrized by $n \in \mathbb{Z}_{\geq1}$.

The same idea can be run on any countable classes of representations with good asymptotic distribution. It turns out that the core of direct sums of syzygies and cosyzygies often follows a normal distribution asymptotically. Therefore, we can extract an asymptotic formula for the core dimension in terms of the mean and variance of this normal distribution.

We recall some definitions in modular representation theory. The non-projective part of a $G$-representation $M$ is called the core of $M$, denoted by $\operatorname{core}(M)$. In \cite{BensonGamma}, the authors consider the following numbers
$$c^G_n(M)=\dim_k \operatorname{core}(M^{\otimes n}),\gamma_G(M)=\lim_{n \to \infty}\sqrt[n]{c^G_n(M)}.$$
The limit always exists and we have $\log c^G_n(M)\sim n\log \gamma_G(M)$. Therefore, we may further consider the behavior of $c^G_n(M)/\gamma_G^n(M)$. The main theorem in this part says
$$c^G_n(M)/\gamma_G^n(M) \sim Cn^\alpha$$
for some constant $C,\alpha$ when $M$ is a direct sum of syzygies or cosyzygies of the trivial module. 

A precise description of $C,\gamma,\alpha$ relies on the data of $G$ and the decomposition of $M$, which we describe below. Let $r$ be the number of generators of $G$, $D=|G|$, $M=\oplus_{i \in \mathbb{Z}}\Omega^i(k)^{a_i}\neq 0$, $\gamma=\sum_i a_i$, $X$ be a random variable such that $P(X=i)=a_i/\gamma$, and the mean and the variance of $X$ are $\mu$ and $\sigma^2$ respectively. Let $Z \sim N(0,1)$ be a random variable.
\begin{theorem}[See \Cref{4.15}]
Under the above notations, if $\mu \neq 0$, then
$$c^G_n(M)\sim \gamma^n\cdot n^{r-1}\cdot |\mu|^{r-1}\cdot \frac{D}{2(r-1)!}.$$
If $\mu=0$ and $\sigma \neq 0$, then
$$c^G_n(M)\sim \gamma^n\cdot n^{(r-1)/2}\cdot \sigma^{r-1}\cdot E(|Z|^{r-1})\cdot \frac{D}{2(r-1)!}.$$
If $\mu=\sigma=0$, then
$$c^G_n(M)=\gamma^n.$$
\end{theorem}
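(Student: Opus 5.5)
The plan is to reduce the statement to a moment computation for a sum of i.i.d. integer random variables, using two standard facts: the tensor product rule for syzygies of $k$, and the growth rate of $\dim_k\Omega^m(k)$ for an abelian $p$-group.

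\textbf{Step 1: reduction to a random walk.} For any finite group we have $\Omega^i(k)\otimes\Omega^j(k)\cong\Omega^{i+j}(k)\oplus(\text{projective})$, and tensoring with a projective gives a projective. Expanding $M^{\otimes n}=\bigl(\bigoplus_i\Omega^i(k)^{a_i}\bigr)^{\otimes n}$ therefore gives
$$\operatorname{core}(M^{\otimes n})\cong\bigoplus_{m\in\mathbb{Z}}\operatorname{core}\Omega^m(k)^{N_m},\qquad N_m=\gamma^n\,P(S_n=m),$$
where $S_n=X_1+\dots+X_n$ with the $X_j$ i.i.d. copies of $X$. Since $\Omega^m(k)$ is indecomposable and non-projective, this means
$$c^G_n(M)=\gamma^n\,E\bigl[d(S_n)\bigr],\qquad d(m)=\dim_k\Omega^m(k).$$
In the case $\mu=\sigma=0$ we have $X\equiv0$, so $M=k^\gamma$ and $c_n^G(M)=\gamma^n$ at once.

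\textbf{Step 2: asymptotics of $d(m)$.} Write $G=\prod_{s=1}^r\mathbb{Z}/p^{e_s}$, so that $kG=k[x_1,\dots,x_r]/(x_s^{p^{e_s}})$ and $D=\dim_k kG$. The minimal resolution of $k$ is the tensor product of the $2$-periodic resolutions of the cyclic factors, so its $j$-th term is free of rank $b_j=\binom{j+r-1}{r-1}$. By exactness,
$$d(m)=\sum_{j=0}^{m-1}(-1)^{m-1-j}b_jD+(-1)^m\qquad (m\ge 0).$$
Pairing consecutive terms of this alternating sum gives
$$d(m)=\frac{D}{2(r-1)!}\,m^{r-1}+O(m^{r-2}+1).$$
Duality gives $\Omega^{-m}(k)\cong\Omega^m(k)^*$, so $d(-m)=d(m)$, and hence
$$d(m)=\frac{D}{2(r-1)!}|m|^{r-1}+O(|m|^{r-2}+1)\quad\text{for all } m\in\mathbb{Z}.$$
When $r=1$ the values of $d$ alternate between $1$ and $D-1$, and this is only "$D/2$ on average". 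There I would instead need equidistribution of the parity of $S_n$, or the statement has to be read with $r\ge2$. I would check how the paper treats this degenerate case; I expect this to be the main delicate point. The other point needing care is getting the exact leading constant $D/2(r-1)!$ right, which I would verify carefully from the alternating binomial sum.

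\textbf{Step 3: moments.} It remains to find the asymptotics of $E|S_n|^{r-1}$, and to check that the error term $O\bigl(E|S_n|^{r-2}+1\bigr)$ is of lower order.

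If $\mu\ne0$: the $X_j$ are bounded, so $S_n/n\to\mu$ almost surely with $|S_n/n|\le\max_i|i|$. Bounded convergence then gives $E|S_n|^{r-1}\sim|\mu|^{r-1}n^{r-1}$, and the error term is $O(n^{r-2})$.

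If $\mu=0$ and $\sigma\neq0$: the central limit theorem gives $S_n/\sqrt n\Rightarrow\sigma Z$. The standard bound $E S_n^{2k}=O(n^k)$ for centered bounded summands gives uniform integrability of $|S_n/\sqrt n|^{r-1}$. Hence
$$E|S_n|^{r-1}\sim\sigma^{r-1}E(|Z|^{r-1})\,n^{(r-1)/2},$$
while the error term is $O\bigl(n^{(r-2)/2}+1\bigr)$.

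Multiplying these moment asymptotics by $\gamma^n D/(2(r-1)!)$, via Step 1 and Step 2, yields both asymptotic formulas in the statement.
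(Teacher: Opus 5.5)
Your proposal is correct and follows the paper's own argument. The paper likewise writes $c^G_n(M)=\gamma^n E(F(Y_n))$ with $F(m)=\dim_k\Omega^m(k)=\frac{D}{2(r-1)!}|m|^{r-1}+O(|m|^{r-2})$ (Corollary 4.7, from Tate's resolution), then uses the law of large numbers or the central limit theorem with convergence of moments; your bounded-convergence and uniform-integrability arguments justify that last step more carefully than the paper's bare assertion that $E(Z_n^\alpha)\to E(Z^\alpha)$.

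Your $r\ge 2$ caveat matches the paper's implicit hypothesis, since Proposition 4.6 and Lemma 4.14 assume $r\ge 2$, i.e.\ $\alpha=r-1\ge 1$. The caveat is genuinely needed: for cyclic $G$ one has $\Omega^2(k)\cong k$. For example, $M=k\oplus\Omega^2(k)$ with $D>2$ has $\mu=1$, but $c^G_n(M)=2^n$ rather than the predicted $2^nD/2$.
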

Here the direct sum distribution of $\operatorname{core}(M^{\otimes n})$ approaches the normal distribution by the central limit theorem. Therefore, in the proof of the main theorem in this part, the language of probability theory is necessary.
A similar result on the dimension of $\operatorname{soc}\operatorname{core}(M^{\otimes n})$ is proved in \Cref{4.18}.

A consequence of the above theorem is the following theorem which answers \cite[Question 14.2]{BensonGamma} in negative. For the definition of $\Omega$-algebraic and eventually recursive sequences, see subsection 4.6 or \cite{BensonGamma}. 
\begin{theorem}[See \Cref{4.21}]
Let $G$ be an abelian $p$-group minimally generated by $r$ elements where $r$ is even. Then for $M=\oplus_{i \in \mathbb{Z}}\Omega^i(k)^{a_i}\neq 0$ with $\sum ia_i=0$, $M$ is $\Omega$-algebraic while $c^G_n(M)$ is not eventually recursive.     
\end{theorem}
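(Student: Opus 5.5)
The proof has two halves: $M$ is $\Omega$-algebraic, and $c^G_n(M)$ is not eventually recursive.

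\textbf{$\Omega$-algebraicity.} I take the definition from \cite{BensonGamma}: $M$ is $\Omega$-algebraic if only finitely many isomorphism classes of indecomposable non-projective summands occur among the $\Omega^i(M^{\otimes n})$, up to $\Omega$-shifts. Since $\Omega^i(k)\otimes\Omega^j(k)\cong\Omega^{i+j}(k)\oplus(\text{proj})$, we get
$$\operatorname{core}(M^{\otimes n})=\bigoplus_m \Omega^m(k)^{b_{n,m}}.$$
Hence every non-projective indecomposable summand of every $M^{\otimes n}$ is a shift of $k$, so $M$ is $\Omega$-algebraic. The hypothesis $\sum ia_i=0$ is not needed for this half.

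\textbf{Asymptotics.} Since $M\neq0$, we have $\gamma=\sum a_i\geq1$ and $\mu=\sum ia_i/\gamma=0$. If $\sigma=0$, then $a_i=0$ for $i\neq0$, so $M=k^{a_0}$; I will treat this degenerate case separately at the end. Otherwise $\sigma>0$, and \Cref{4.15} gives
$$c^G_n(M)\sim C\gamma^n n^{(r-1)/2}, \qquad C=\sigma^{r-1}E(|Z|^{r-1})\frac{D}{2(r-1)!}>0.$$
Because $r$ is even, $r-1$ is odd, so the exponent $(r-1)/2$ is a half-integer.

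\textbf{Non-recursiveness.} An eventually recursive sequence satisfies a linear recurrence with constant coefficients for $n\gg0$. Its generating function is therefore rational, and the sequence equals $\sum_j P_j(n)\lambda_j^n$ for large $n$, with polynomials $P_j$ and complex roots $\lambda_j$. I will use the standard asymptotic analysis of such exponential polynomials. Let $\rho=\max|\lambda_j|$, let $d$ be the maximal degree of $P_j$ among roots with $|\lambda_j|=\rho$, and let $c_j$ be the corresponding leading coefficients. Then
$$\frac{u_n}{\rho^n n^d}=\sum_j c_j\omega_j^n+o(1), \qquad |\omega_j|=1.$$
The trigonometric polynomial $\sum_j c_j\omega_j^n$ does not tend to $0$: its mean square average is $\sum_j|c_j|^2>0$. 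It is bounded, so $u_n=O(\rho^n n^d)$, and it is bounded below in absolute value along a positive-density set of $n$.

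Comparing with $u_n\sim C\gamma^n n^{(r-1)/2}$:
\begin{itemize}
\item Taking $n$-th roots along that set shows $\rho=\gamma$.
\item If $d<(r-1)/2$, the upper bound $u_n=O(\gamma^n n^d)$ contradicts the growth of $u_n$.
\item If $d>(r-1)/2$, then $u_n/(\gamma^n n^d)\to0$, contradicting the lower bound along the positive-density set.
\end{itemize}
So $d=(r-1)/2$, which is impossible because $d$ is an integer. Hence $c^G_n(M)$ is not eventually recursive. The main obstacle is making this dichotomy rigorous. The cleanest route is the Ces\`aro mean-square argument above, which avoids needing the $\omega_j^n$ terms to converge.

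\textbf{Degenerate case.} If $\sigma=0$, then $M=k^{a_0}$ and $c^G_n(M)=a_0^n$, which is recursive. So the theorem as stated must exclude this case: it needs $\sigma\neq0$, i.e. some $a_i\neq0$ with $i\neq0$. The argument above applies exactly when $\sigma\neq0$; when $\sigma=0$ the conclusion fails, and this exception should be noted with the proof.
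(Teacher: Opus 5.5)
Your proof is correct and follows the paper's route: \Cref{4.15} gives $c^G_n(M)\sim C\gamma^n n^{(r-1)/2}$ with $C>0$ and a half-integer exponent, and this is incompatible with the integer degree $d$ that governs the dominant terms of a linear recurrence. Your Ces\`aro mean-square lower bound on the dominant trigonometric polynomial makes rigorous the paper's looser claim that such sequences behave like $C_n\gamma^n n^k$ with $C_n$ ``constant or oscillating bounded''; you are also right that the degenerate case must be excluded, since for $M=k^{a_0}$ the hypotheses hold but $\sigma=0$ and $c^G_n(M)=a_0^n$ is recursive, so the statement needs some $a_i\neq 0$ with $i\neq 0$ --- the paper's proof simply asserts $\sigma\neq 0$ without justification.
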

\subsection{Computing core series of $\Omega$-algebraic modules}
We call
$$c_M(z)=\sum_{n \geq 0}c^G_n(M)z^n$$
the core series of $M$. When $n=0$, we adopt the convention that $M^{\otimes 0}=k$ and $c^G_n(M)=1$. \Cref{4.21} is equivalent to the fact that there is an algebraic core series which is not rational. We move on to check more cases of core series and explore their behavior. 

We first focus on the $\Omega$-algebraic case. In this case, the core series is always algebraic by a result of \cite{CHU}. In Chapter 5, we will present a systematical way of computing such series using finite data on the decomposition of tensor products. It turns out that many such series are algebraic but not rational. 

The core statement we use here is the \textbf{formal Cauchy's residue theorem} (see \Cref{5.10}), which allows us to calculate the formal contour integrals $[u^{-1}]f(u,z)$ using residues at ``small" poles of $f$. This theorem has been widely used in analytic combinatorics, and in this paper it allows us to evaluate core series very quickly. 

Our method of computation leads to the following description of the core series. Assume the core of all tensor powers of the $\Omega$-algebraic module $M$ lie in the orbit of $M_1,\ldots,M_r$, and $[\otimes M]$ gives an $r*r$-matrix $A \in \operatorname{Mat}_r(\mathbb{N}[u,u^{-1}])$ on the free $\mathbb{Z}[u,u^{-1}]$-module generated by classes $[M_i]$'s. For a polynomial in $\mathbb{C}((z))[[u]]$, a root $u=\rho(z) \in \overline{\mathbb{C}((z))}$ is called small if $\operatorname{ord}_z\rho(z)>0$, otherwise it is big. Let $N$ be a direct sum of syzygies and cosyziges of $M_i$'s.
\begin{theorem}[See \Cref{5.16}]
Let $h(u,z)=\det(1-zA)$, which can be viewed as a Laurent polynomial in $u$. Assume all small roots of $h(u,z)$ are $\rho_{1}(z),\ldots,\rho_{r}(z)$.  Then
$$\sum_{n \geq 0}\ell(N\otimes M^n)z^n \in \mathbb{C}(z,\rho_{i}(z)|1 \leq i \leq r).$$
If $h(u,z)=u^{-\nu}h_1(u,z)$ such that $h_1(u,z)$ is a polynomial in $u$ which is not divisible by $u$, $\deg_u h_1(u,z)=D$, then
$$[\mathbb{C}(z,f(z)):\mathbb{C}(z)]\leq D!.$$
If moreover $A$ is a $1*1$ matrix, then 
$$[\mathbb{C}(z,f(z)):\mathbb{C}(z)]\leq D(D-1)\ldots (D-\nu+1).$$
Also, the above holds for the core series of an $\Omega$-algebraic module.
\end{theorem}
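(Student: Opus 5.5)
We follow the generating-function formalism set up before \Cref{5.10}. We encode $N\otimes M^{\otimes n}$ (modulo projectives) as a vector $v_n\in \mathbb{N}[u,u^{-1}]^r$, where the coefficient of $u^j[M_i]$ records the multiplicity of $\Omega^j(M_i)$ in the core. The $\Omega$-algebraicity hypothesis says that tensoring with $M$ acts on these classes by the matrix $A$, so $v_n=A^n v_0$. Summing over $n$ gives
$$\sum_n v_n z^n=(1-zA)^{-1}v_0=\frac{\operatorname{adj}(1-zA)\,v_0}{h(u,z)}.$$
The length $\ell(\Omega^j(M_i))$ is a quasi-linear or explicitly computable function of $j$; for the trivial module it is piecewise polynomial in $|j|$, with different formulas for $j\ge 0$ and $j<0$. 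The series $\sum_n\ell(N\otimes M^n)z^n$ is therefore obtained by pairing the vector series with weights $w_i(u)$. Each $w_i(u)$ is a sum of two rational functions of $u$, one for positive and one for negative powers, of the form $\sum_{j\geq0}P(j)u^{\pm j}$. The pairing is then expressed as a constant-term extraction $[u^{-1}]F(u,z)$ with $F\in \mathbb{C}(u,z)$ whose denominator is $h(u,z)$ times powers of $(1-u)$ and of $u$.

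The next step applies the formal residue theorem \Cref{5.10}. It gives $[u^{-1}]F(u,z)$ as the sum of residues of $F$ at its small poles. These poles are the small roots $\rho_1,\dots,\rho_r$ of $h$ together with possibly $u=0$. The splitting into $j\geq0$ and $j<0$ parts is arranged so that the pole at $u=1$ is big, or so that we can substitute $u\mapsto u^{-1}$ appropriately. Each residue at $\rho_i$ is a rational expression in $z$ and $\rho_i(z)$, including the multiple-root case through derivatives. The residue at $u=0$ is a rational function of $z$, since it is computed from finitely many Laurent coefficients. This proves the first containment.

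For the degree bounds, the small roots are among the $D$ roots of $h_1(u,z)$ over $\overline{\mathbb{C}(z)}$, so the compositum $\mathbb{C}(z,\rho_1,\dots,\rho_r)$ lies in the splitting field of $h_1$, which has degree at most $D!$. For a $1\times1$ matrix we use Puiseux and Newton polygon theory. Writing $h=1-zA(u)$ with $A(u)=u^{-\nu}a(u)$ shows that exactly $\nu$ roots are small, namely those with $u^\nu\sim z a(0)$. The sum of residues at the small roots is a symmetric function of $\rho_1,\dots,\rho_\nu$. It therefore lies in the field generated by the elementary symmetric functions of one $\nu$-subset of the $D$ roots. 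The Galois conjugates of that field correspond to at most $D(D-1)\cdots(D-\nu+1)$ choices, taking ordered choices as a safe upper bound, which gives the second bound. The same symmetric-function argument also suggests how to strengthen the general case.

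The main obstacle will be the bookkeeping in the second step. We must ensure the weights $w_i$ combine into a genuinely rational $F$ whose only extra small pole is $u=0$, and handle the fact that the length function treats $\Omega^j$ for $j\ge0$ and $j<0$ asymmetrically. We will try first to split $F=F_++F_-$ and apply \Cref{5.10} separately, after substituting $u\mapsto u^{-1}$ in $F_-$, checking that small and big roots swap consistently. The core series of an $\Omega$-algebraic module is the special case $N=k$, with $\ell$ replaced by the core dimension, which is the length of the nonprojective summand and is given by the same type of weights.
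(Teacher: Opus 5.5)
There is a genuine gap in your second step, exactly at the point you defer as ``bookkeeping''.

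\Cref{5.10} applies only to $[u^{-1}]$ of elements of $\mathbb{C}((u))((z))$. Take $f=\sum_{m,n}a_{m,n}u^mz^n$ and $g=\sum_{k\ge k_0}g_ku^k\in\mathbb{C}((u))$. Then $[u^{-1}](fg)=\sum_n\bigl(\sum_m g_{-1-m}a_{m,n}\bigr)z^n$, whose weights vanish for $m>-1-k_0$. Derivatives and substitutions $u\mapsto\xi u$ do not change this. But the weights $\ell(\Omega^m(M_j))$ are nonzero for all $m$, so no single extraction $[u^{-1}]F$ of the kind you describe gives the pairing. If you expand the rational function $\sum_{j\ge0}P(j)u^{-j}$ in $\mathbb{C}((u))$ anyway, you silently replace your weights by weights supported on $m<0$, and the error is a two-sided sum.

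Your fallback is to split off the half carrying the weights for $m\ge0$ and substitute $u\mapsto u^{-1}$ there. That identity is valid. However, \Cref{5.10} then produces residues at the small roots of $h(u^{-1},z)$. These are the reciprocals of the big roots of $h$, i.e.\ of the $D-\nu$ roots of $h_1$ of negative $z$-order. So, as planned, you only land in the splitting field of $h_1$. That suffices for the $D!$ bound. It does not give the containment in $\mathbb{C}(z,\rho_1,\ldots,\rho_r)$. Your $1\times1$ argument, which presumes that only small-root residues occur, also does not apply as written.

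The paper's missing idea is never to extract the positive half at all:
\begin{itemize}
\item On each residue class mod $Q$ (isolated by \Cref{5.14}), write the weight as a polynomial in $m$ and $[m]^-$, using $[m]^+=m+[m]^-$ (e.g.\ $|m|=2[m]^-+m$).
\item The pure-$m$ part is a two-sided sum $\sum_mP(m)a_{m,n}$. You get it by applying $\partial_u$ and setting $u=1$. This is legitimate because every $z^n$-coefficient of $f_j(\xi_Q^iu,z)$ is a Laurent polynomial in $u$, and it yields an element of $\mathbb{C}(z)$. This is exactly the two-sided error noted above.
\item Only the $[m]^-$ part needs $[u^0]$, with multipliers such as $u/(1-u)^s\in\mathbb{C}[[u]]$ (\Cref{5.12}, \Cref{5.13}). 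Its only small poles are $u=0$ and the $\xi_Q^{-i}\rho_j$.
\end{itemize}

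Alternatively, you can repair your own route in either of two ways. First, the residues of a function in $\mathbb{C}(z)(u)$ over all points of $\mathbb{P}^1(\overline{\mathbb{C}(z)})$ sum to zero, and those at $0$, $\infty$ and roots of unity lie in $\mathbb{C}(z)$. Second, the big-root contribution is fixed by every automorphism of the splitting field of $h_1$ that preserves the set of small roots. Hence it lies in the field generated by the coefficients of the small-root factor of $h_1$, which is contained in $\mathbb{C}(z,\rho_1,\ldots,\rho_r)$.

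Once this is in place, your degree reasoning goes through. In fact your symmetric-function argument gives the sharper bound $\binom{D}{\nu}$, and it works for any size of $A$, because $h_1$ always has exactly $\nu$ small roots.
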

We calculate some concrete examples of core series, including the case $(G,M)=(V_4,\Omega^2(k)\oplus (\Omega^{-1}(k))^2),(V_4,\Omega^3(k)\oplus (\Omega^{-2}(k))^2),(V_4,\Omega^{-3}(k)\oplus (\Omega^{-1}(k))^3\oplus\Omega(k)^3\oplus \Omega^3(k))$. We also calculate the core series in \cite[Example 15.1]{BensonGamma} and \cite[Section 5, example (4)]{CHU} as below. 
\begin{theorem}[See \Cref{5.22}]
Let $G=\mathbb{Z}/3\times \mathbb{Z}/3$ and $M$ is given by the following diagram  
\begin{center}
\begin{tikzpicture}[thick, line cap=round, shorten >=4pt, shorten <=4pt]

\coordinate (A) at (0,0);
\coordinate (B) at (1,1);
\coordinate (C) at (2,0);
\coordinate (D) at (-1,1);
\coordinate (E) at (-2,0);
\coordinate (F) at (-1,-1);

\draw (A) -- (B);
\draw (A) -- (D);
\draw (A) -- (F);
\draw (B) -- (C);
\draw (D) -- (E);
\draw (E) -- (F);

\fill (A) circle (2pt);
\fill (B) circle (2pt);
\fill (C) circle (2pt);
\fill (D) circle (2pt);
\fill (E) circle (2pt);
\fill (F) circle (2pt);

\node at (D) [below=6pt] {$a$};
\node at (B) [below=6pt] {$b$};

\end{tikzpicture}    
\end{center}
Then
\begin{align*}
c_M(z)=1+z\left(\frac{3(2+3z-8z^2-12z^3)}{(1-4z^2)(1-9z^2)}
+\frac{18z}{(1-2z)\sqrt{1-4z^2}}\right).
\end{align*}
\end{theorem}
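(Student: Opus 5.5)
We will follow the method developed in Chapter 5, which leads to \Cref{5.16}. The first task is to identify the finite list of modules whose syzygy orbits contain the cores of all tensor powers of $M$. The module $M$ is the $6$-dimensional module of \cite[Example 15.1]{BensonGamma}, which is $\Omega$-algebraic by \cite[Section 5, example (4)]{CHU}. We first decompose $M\otimes M$ explicitly over $k[\mathbb{Z}/3\times\mathbb{Z}/3]=k[x,y]/(x^3,y^3)$, where $x=a-1$ and $y=b-1$. We then keep tensoring each new non-projective summand with $M$ until the list of indecomposables closes up to $\Omega$-shifts. We expect the list to consist of $M$ itself, a few ``string-type'' modules of the same shape, and syzygies $\Omega^i(k)$ of the trivial module. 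The square-root term $\sqrt{1-4z^2}$ with $2z$ strongly suggests that a two-dimensional-like random walk on $\Omega$-degrees appears, with rate $2$ per step and mean zero. This is the situation of \Cref{4.15} with $\mu=0$, so we expect summands of the form $\Omega^{1}(k)\oplus\Omega^{-1}(k)$ among the pieces. The rational term with poles at $\pm1/2$ and $\pm1/3$ should come from the parts with trivial $u$-dependence, namely the periodic or $\Omega$-stable summands.

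Second, we encode the tensor action as a matrix $A\in\operatorname{Mat}_r(\mathbb{N}[u,u^{-1}])$ on the classes $[M_i]$, where $u$ records the $\Omega$-shift. The generating function $\sum_n [M^{\otimes n}]z^n$ is then $(1-zA)^{-1}$ applied to $[k]$. To extract dimensions we need $\dim\Omega^j(M_i)$ as an explicit function of $j$. For $\Omega^j(k)$ over a rank-$2$ group, this dimension is $3|j|+1$ for $j\neq 0$, since $D=9$ and $r=2$ give linear growth. Writing this as a sum of Laurent series in $u$ evaluated against the $u^0$-coefficient turns $c_M(z)$ into a formal contour integral $[u^{-1}]F(u,z)$, with $F$ a rational function in $u$ and $z$.

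Third, we apply the formal Cauchy residue theorem \Cref{5.10}: $[u^{-1}]F$ equals the sum of residues at the small poles of $F$, that is, the small roots of $h(u,z)=\det(1-zA)$ together with $u=0$ if needed. For the random-walk block we expect a factor like $1-z(u+u^{-1})$ or $1-2z(u+u^{-1})/2$. Its small root is $\rho(z)=\bigl(1-\sqrt{1-4z^2}\bigr)/(2z)$, and the residue computation produces the factor $1/\sqrt{1-4z^2}$. The prefactor $18z/(1-2z)$ should come from the linear dimension weight $3|j|$ summed over the geometric contributions. The remaining blocks give the rational part $3(2+3z-8z^2-12z^3)/((1-4z^2)(1-9z^2))$. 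Finally, we add $1$ for $n=0$ and check the first few coefficients, for instance $c_1=6$, against direct computation.

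The main obstacle is the first step: computing the decompositions of $M\otimes M_i$ correctly, including the $\Omega$-shifts, and verifying that the list closes. Over $\mathbb{Z}/3\times\mathbb{Z}/3$ this is a wild-type algebra, so the decompositions must be found by hand using the explicit string structure of the diagram, for example via the basis $a,b$ and generic Jordan type arguments, or by computer. Once $A$ is fixed, the residue computation is routine.
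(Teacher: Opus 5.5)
\textbf{There is a genuine gap.} Your proposal describes the general machinery of Section 5 correctly: encode $\otimes M$ on $\Omega$-orbits by a Laurent-polynomial matrix, weight by quasi-polynomial dimension functions, then extract $[u^m]$-sums by residues or explicit expansion. Your guess of the random-walk factor $1/(1-z(u+u^{-1}))$ with small root $(1-\sqrt{1-4z^2})/(2z)$ is also right. But this theorem is a specific computation, and you neither supply its inputs nor anticipate them correctly. The step you call the main obstacle is essentially the whole proof.

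In the paper, the closed list is $M$, $M^*$ and the $3$-dimensional module $N=k\uparrow_{\langle g\rangle}^G$, with
\[
A=\begin{pmatrix} u & u^{-1} & 1\\ u^{-1} & u & 1\\ 0 & 0 & 3u\end{pmatrix}
\]
taken from Benson--Symonds. So, modulo projectives, $M\otimes M\cong\Omega M\oplus\Omega^{-1}M^*\oplus N$. No $\Omega^i(k)$ occurs in $\operatorname{core}(M^{\otimes n})$ for $n\geq 1$, contrary to your expectation. The second essential input is the dimension functions. The paper shows $\ell(\Omega^n M)=\ell(\Omega^{-n}M^*)=9|n|+\frac{9}{2}+\frac{3}{2}(-1)^n$. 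It gets this from the eventually periodic minimal resolution of $M$ over $S=k[t_1,t_2]/(t_1^3)$ (Betti numbers $2,4,4,\dots$) together with $\beta^R_M(z)=\beta^S_M(z)/(1-z^2)$, which is valid because $t_2^2M=0$. It also uses $\ell(\Omega^n N)=3$ or $6$ according to parity. Your proposal has no route to either of these.

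Where you do commit to numbers, they are wrong and would give a different series. The formula $\dim\Omega^j(k)=3|j|+1$ is false: $\dim\Omega(k)=\dim\mathfrak{m}=8$, and in general the growth is $\frac{9}{2}|j|+O(1)$ by \Cref{4.7}. A linear weight of slope $3$ against $J=1/(1-z(u+u^{-1}))$ would produce $6z/((1-2z)\sqrt{1-4z^2})$, not the $18z/((1-2z)\sqrt{1-4z^2})$ in the statement. The $18$ is $9\cdot 2$: $9$ is the slope of $\ell(\Omega^n M)$, and $\sum_m |m|[u^m]J=2z/((1-2z)\sqrt{1-4z^2})$. Moreover, $J$ only appears because $f_1+f_2=J$ and $\ell(\Omega^m M)=\ell(\Omega^m M^*)$, which lets the $M$- and $M^*$-orbits be merged.

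The rational part also does not come from summands with trivial $u$-dependence, since the $N$-block has entry $3u$. It comes from the period-$2$ parts of the dimension functions. One evaluates $J$ and $f_3=zu/((1-3uz)(u-z-zu^2))$ at $u=\pm 1$, as in \Cref{5.14}. This gives
\[
\frac{6(1+z)}{1-4z^2}+\frac{3z(1+10z+6z^2)}{(1-4z^2)(1-9z^2)}.
\]
To turn your outline into a proof you need exactly these ingredients:
\begin{enumerate}
\item the list $M,M^*,N$ with the matrix $A$;
\item the formulas for $\ell(\Omega^n M)$ and $\ell(\Omega^n N)$;
\item the explicit evaluation at $u=\pm1$ together with the $|m|$-weighted sum for $J$.
\end{enumerate}
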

\subsection{One non-$\Omega$-algebraic module and one transcendental core series}
In Section 6, we study direct sums of the following classes of modules
$$\Omega^j_S\Omega^i_R(k),\Omega^j_S(R),S$$
where $R=kH,S=kG$, $G$ surjects onto $H$, and any $H$ representation is identified with a $G$-representation via inflation; equivalently this identifies an $R$-module with an $S$-module via surjection $S \twoheadrightarrow  R$.

The above classes are closed under tensor product, but they lie in countably many orbits under $\Omega_S$, so modules in these classes are in general not $\Omega$-algebraic. We have the following result.
\begin{theorem}[See \Cref{6.4}]
Let
$$M=\Omega_S\Omega_R(k)\oplus \Omega_S\Omega^{-1}_R(k)\oplus\Omega_S^{-1}\Omega_R(k)\oplus \Omega_S^{-1}\Omega^{-1}_R(k).$$
Then the core series of $M$ is transcendental.
\end{theorem}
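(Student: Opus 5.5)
We will compute $c^G_n(M)$ exactly and show that its asymptotics are incompatible with a D-finite (in particular algebraic) series. The setup is $S=kG$ and $R=kH$ with $G\twoheadrightarrow H$. The natural choice is $G=\mathbb{Z}/p\times\mathbb{Z}/p$ (or $V_4$) with $H=\mathbb{Z}/p$ a quotient, so that $\Omega_R^{\pm1}(k)$ inflated to $G$ is a small module.

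\textbf{Step 1: tensor product rules.} Establish the rules for tensor products in the class $\Omega^j_S\Omega^i_R(k)$, $\Omega^j_S(R)$, $S$ modulo projectives. Tensoring over $k$ with inflations, we have
\[
\Omega_S^{j}\Omega_R^{i}(k)\otimes\Omega_S^{j'}\Omega_R^{i'}(k)\cong \Omega_S^{j+j'}\bigl(\Omega_R^i(k)\otimes\Omega_R^{i'}(k)\bigr)\oplus\text{proj}.
\]
Over the cyclic group $H$, we have $\Omega_R^i(k)\otimes\Omega_R^{i'}(k)\cong\Omega_R^{i+i'}(k)\oplus(\text{free }R\text{-modules})$. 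The free $R$-summands inflate to $R$, whose $\Omega_S$-shifts are the second family. In addition, $R\otimes \Omega^i_R(k)$ is a multiple of $R$, and $R\otimes R\cong R^{|H|}$.

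\textbf{Step 2: combinatorial model for $M^{\otimes n}$.} Using Step 1, $M^{\otimes n}$ decomposes as a sum over words in $\{\pm1\}^2$ of $\Omega_S^{a}\Omega_R^{b}(k)$ together with a controlled number of copies of $\Omega_S^{a}(R)$. Here $a$ and $b$ are sums of independent $\pm1$ steps, i.e.\ two independent simple random walks. The core dimension of each piece can be made explicit:
\begin{itemize}
\item $\dim\Omega_S^{a}(R)$ is linear in $|a|$, via a resolution of $R$ over $S$ (e.g.\ $S$ is free over a complementary subgroup).
\item $\dim \Omega_S^a\Omega_R^b(k)$ is similarly linear in $|a|$, with coefficients depending on the parity or residue class of $b$.
\end{itemize}
Summing gives a closed formula for $c_n^G(M)$. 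It is built from sums like $\sum_a\binom{n}{k}|a|$ and central binomial coefficients, together with the free-part contributions, which are counted by lattice-path numbers for the $R$-walk hitting the ``free'' terms. I expect
\[
c_n\sim C\,4^n\,n^{\beta}\quad\text{or}\quad c_n\sim C\,4^n\,n^{\beta}\log n .
\]

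\textbf{Step 3: transcendence.} The aim is to show that either a $\log n$ factor appears, or that the exponent/constant pattern violates the Flajolet criterion. That criterion says algebraic series have coefficients asymptotic to $C\gamma^n n^{\beta}/\Gamma(\beta+1)$ with $\beta\notin\{-1,-2,\dots\}$ and no logarithmic terms. A cleaner target is that the generating function contains a term like a product $\frac{1}{\sqrt{1-4z}}\cdot\frac{1}{\sqrt{1-4z}}$ composed in a Hadamard-product way, e.g.\ $\sum\binom{2n}{n}^2z^n$. That series is an elliptic-integral series, which is D-finite but transcendental, with asymptotics $16^n/(\pi n)$. The exponent $\beta=-1$ with irrational constant $1/\pi$ is forbidden for algebraic series. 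So I would organize the computation to isolate such a Hadamard product: the $S$-walk and the $R$-walk are independent, and both contribute factors of order $n^{-1/2}$ or $|a|$. Then I would use the remaining terms being algebraic (via Theorem 5.16-style residue computations) to conclude $c_M(z)$ is transcendental.

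\textbf{Main obstacle.} The main difficulty is Step 2's precise bookkeeping of the free-$R$ summands generated at each tensor step and their $\Omega_S$-shifts. If that is awkward, I would instead pass to the asymptotics directly: compute $c_n$ up to $o(\gamma^n n^{\beta})$ via the central limit theorem, as in Theorem 4.15. I would then show the leading exponent is a negative integer or carries a $\log n$, which already precludes algebraicity.
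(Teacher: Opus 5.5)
There is a genuine gap, and it starts with the choice of groups.

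\textbf{The choice of $H$.} The theorem concerns the paper's specific pair $H=V_4$, $G=\mathbb{Z}/4\times\mathbb{Z}/4$; in the Settings, $G$ and $H$ have the same rank $r\geq 2$. With $H$ cyclic, as you propose, the statement is actually false.

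For cyclic $H$ one has $\Omega_R^2(k)\cong k$, so $\Omega_R^{-1}(k)\cong\Omega_R(k)$; when $|H|=2$ even $\Omega_R(k)\cong k$. By your Step 1 rules, every non-projective summand of $M^{\otimes n}$ then lies in the $\Omega_S$-orbit of $k$, $\Omega_R(k)$ or $R$. Hence $M$ is $\Omega$-algebraic, and its core series is algebraic by \cite{CHU}. For instance, for $V_4\twoheadrightarrow\mathbb{Z}/2$ you get $M\cong\Omega_S(k)^2\oplus\Omega_S^{-1}(k)^2$, whose core series is $c(2z)$ with $c(z)$ the algebraic series of \Cref{5.3}.

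Your own remark that $\dim\Omega^a_S\Omega^b_R(k)$ depends on $b$ only through a residue class is exactly the symptom. The $R$-walk carries no weight $|b|$, so there is no product of two independent $|\cdot|$-weights and no Hadamard square. Also, the kernel of $G\to H$ is then cyclic, so $\dim\Omega^a_S(R)=[G:K]\dim\Omega^a_{kK}(k)$ is bounded, not linear in $|a|$.

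In the paper's setting, $\dim\Omega^i_R(k)=2|i|+1$ grows, so $a_{ij}=\dim\Omega^j_S\Omega^i_R(k)$ contains the term $8|ij|$. There is also a second source of the square. Write $N=\Omega_R(k)\oplus\Omega^{-1}_R(k)$, so $M=\Omega_S(N)\oplus\Omega^{-1}_S(N)$. The number of free $R$-summands $s_n=\frac14(6^n-c^H_n(N))$ contains $A_n=\sum_\delta\binom{n}{\delta}|n-2\delta|$. Multiplying it by $\sum_\delta\binom n\delta\dim\Omega^{n-2\delta}_S(R)=4(2A_n+2^n)$ gives another $A_n^2$ contribution. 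With $B=2^n$ and $C=\binom{n}{n/2}$, exact bookkeeping yields:
\begin{itemize}
\item $c^G_n(M)=4A^2+6AB+2B^2+6^n(2A+B)$ for odd $n$;
\item $c^G_n(M)=4A^2+2AB+4AC+6^n(2A+B)$ for even $n$.
\end{itemize}

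\textbf{The asymptotic fallback.} Detecting transcendence from leading-order asymptotics, via the central limit argument of \Cref{4.15}, cannot work even in the correct setting. The dominant term is $2\cdot 6^nA_n\sim c\,12^n\sqrt n$, coming from the free $R$-summands, and it is compatible with algebraicity. The transcendental piece $4A_n^2\sim\frac{8}{\pi}n\,4^n$ is exponentially smaller, and its leading exponent is the harmless integer $1$.

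The obstruction only appears after three steps:
\begin{enumerate}
\item Compute $c^G_n(M)$ exactly.
\item Discard the pieces with algebraic generating functions: $AB$, $B^2$, $6^nA$, $6^nB$ are Hadamard products of algebraic series with geometric sequences.
\item Expand $A_n^2$ to subleading order with the Stirling series (for even $n$, use $A_n^2+A_nC_n=n(n+1)\binom{n}{n/2}^2$). After subtracting $c_1n4^n+c_24^n$, which has a rational generating function, a nonzero $c_3\,4^n/n$ term remains. This forces a logarithmic singularity.
\end{enumerate}
That is the paper's argument. Your Hadamard-product heuristic is the right mechanism, but it must be run with $H$ of rank at least $2$ and with exact rather than asymptotic bookkeeping.
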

We end up this paper with questions on a more general class of series, that is, the $D$-finite series: do modules in the above class have $D$-finite core series, and is a core series $D$-finite in general?

\section{Representation theory of $k$-objects}
In this section, we study the representation theory of $k$-objects, i.e. finitely generated $k[T]$-module supported on the maximal ideal $(T)$. The concepts of the tensor product and the representation ring of $k$-objects are first studied and introduced in \cite{HanMonsky}. In this paper, we will recall results from \cite{HanMonsky}, including the length function and tensor product of $k$-objects, and then we will point out the equivalence of $k$-objects and modular representations of cyclic groups. We fix a field $k$ of characteristic $p>0$ in this section. We follow the convention that any nonpositive power generates the unit ideal, for example, if $T \in k[T]$ and $i \leq 0$, then $(T^i)=k[T]$ as a $k[T]$-ideal. The notation $\ell$ refers to the length of a module. For $k[T]$-module supported on $T$, this is the same as the vector space dimension over $k$.

\subsection{$k$-objects and the additive structure on classes of $k$-objects}
\begin{definition}
\begin{enumerate}
\item A \textup{$k$-object} $M$ with respect to $T$ is a finitely generated $k[T]$-module annihilated by a power of $T$.
\item The direct sum of two $k$-objects is the direct sum as $k[T]$-module.

\item $\Gamma$ is the quotient of the free abelian group over symbols $[M]$ where $M$ runs through isomorphic classes of $k$-objects by the relations $[M\oplus N]-[M]-[N]$. 
\item Let $V_i=k[T]/(T^i)$, $\delta_i=[k[T]/(T^i)] \in \Gamma$ be its class in $\Gamma$ for $i\geq 1$.
\item Let $\Gamma_e=\sum_{1 \leq i \leq p^e}\mathbb{Z}\delta_i$.
\end{enumerate}    
\end{definition}
We see $k[T]$ is a PID. From the structure theorem of PID, we deduce:
\begin{proposition}
\begin{enumerate}
\item For each $i$ there is a unique indecomposable $i$-dimensional $k$-object which is $V_i$.
\item $\Gamma$ is a free abelian group with basis $\delta_i,i \geq 1$ and the addition satisfies $[M]+[N]=[M\oplus N]$.
\item Every element in $\Gamma$ can be expressed as $[M]-[N]$, where $M,N$ are two $k$-objects.
\item $\Gamma_e=\oplus_{1 \leq i \leq p^e}\mathbb{Z}\delta_i$. A $k$-object $M$ is annihilated by $T^{p^e}$ if and only if $[M] \in \Gamma_e$.
\end{enumerate}   
\end{proposition}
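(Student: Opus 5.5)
The plan is to deduce everything from the structure theorem for finitely generated modules over the PID $k[T]$, combined with the fact that the only monic irreducible factor of $T^m$ is $T$. Let $M$ be a $k$-object, annihilated by $T^m$. The structure theorem gives
\[
M \cong k[T]^{s}\oplus \bigoplus_j k[T]/(f_j^{n_j}),
\]
with each $f_j$ irreducible. Since $T^m$ kills $M$, there is no free part, so $s=0$. Also each $f_j^{n_j}$ divides $T^m$, which forces $f_j=T$ up to a unit. Hence $M\cong\bigoplus_i V_i^{m_i}$ for nonnegative integers $m_i$. The structure theorem also says the multiplicities $m_i$ are uniquely determined by $M$, for instance via $\dim_k T^{i-1}M/T^iM$.

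For (1), suppose $M$ is indecomposable. Then the decomposition has exactly one summand, so $M\cong V_i$. Comparing dimensions gives $i=\dim_k M$. Conversely, $V_i$ is indecomposable: it is cyclic with a local endomorphism ring $k[T]/(T^i)$, or alternatively one can invoke uniqueness of the elementary divisors.

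For (2), consider the homomorphism $\phi$ from the free abelian group on isomorphism classes to $\bigoplus_{i\ge1}\mathbb{Z}$ sending $[M]\mapsto(m_i(M))_i$. Uniqueness of the multiplicities makes $\phi$ well defined, and $m_i$ is additive on direct sums, so $\phi$ kills the defining relations and descends to $\Gamma$. In $\Gamma$ the relations give $[M]=\sum m_i\delta_i$, so the $\delta_i$ span $\Gamma$. Since $\phi(\delta_i)$ is the $i$-th standard basis vector, the $\delta_i$ are also independent. Therefore $\Gamma$ is free with basis $\{\delta_i\}$, and $[M]+[N]=[M\oplus N]$ holds by the defining relations.

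For (3), write $x\in\Gamma$ as $\sum a_i\delta_i$ and split the coefficients by sign. Setting $M=\bigoplus_{a_i>0}V_i^{a_i}$ and $N=\bigoplus_{a_i<0}V_i^{-a_i}$ gives $x=[M]-[N]$.

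For (4), the basis in (2) makes the sum $\sum_{i\le p^e}\mathbb{Z}\delta_i$ direct. For the second claim, $T^{p^e}$ kills $\bigoplus V_i^{m_i}$ if and only if $T^{p^e}$ kills each $V_i$ with $m_i>0$. This happens exactly when every such $i$ satisfies $i\le p^e$, which by (2) is equivalent to $[M]=\sum m_i\delta_i\in\Gamma_e$.

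The only point requiring care is the well-definedness of $\phi$, i.e. the uniqueness part of the structure theorem; every other step is bookkeeping.
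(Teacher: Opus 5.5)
Your proposal is correct and takes the same route as the paper, which deduces all four parts directly from the structure theorem for finitely generated modules over the PID $k[T]$. You have simply written out the bookkeeping the paper leaves implicit: there is no free part, every elementary divisor is a power of $T$, and uniqueness of the multiplicities makes the map $\Gamma\to\bigoplus_{i\ge1}\mathbb{Z}$ well defined.
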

\begin{definition}
Let $M$ be a $k$-object with respect to $T$. 
\begin{enumerate}
\item Write $M=\oplus_{i \geq 1}(k[T]/T^i)^{e_i}$, then we define $e_{M,T}(i)=e_i$. It is a function $\mathbb{Z}_{>0} \to \mathbb{N}$ whose value at $i$ is the multiplicity of $k[T]/(T^i)$ in $M$.
\item We define $\ell_{M,T}(i): \mathbb{Z} \to \mathbb{Z}$ to be the following function
\begin{equation*}
\ell_{M,T}(i) = \left\{
        \begin{array}{ll}
            0 & \quad i \leq 0 \\
            \ell(M/T^iM) & \quad i \geq 1
        \end{array}
    \right.
\end{equation*}
and call it \textbf{the length function of $M$}. 
\end{enumerate}
We omit $T$ if the action of $T$ on $M$ is clear.
\end{definition}
By definition, $[M]=\sum_{i \geq 1}e_{M}(i)\delta_i$. For $\gamma\in \Gamma$, we define $e_\gamma=e_M-e_N$ and $\ell_\gamma=\ell_M-\ell_N$ where $M,N$ are two $k$-objects satisfying $\gamma=[M]-[N]$. This definition does not depend on the choice of $M,N$.
\begin{proposition}\label[proposition]{5.2 e_M and l_M relation}
Let $M$ be a $k$-object. Then: 
\begin{enumerate}
\item \begin{equation*}
\ell_M(n) = \left\{
        \begin{array}{ll}
            0 & \quad n \leq 0 \\
            e_{M}(1)+2e_{M}(2)+\ldots+ne_{M}(n)+ne_{M}(n+1)+\ldots & \quad n \geq 1.
        \end{array}
    \right.
    \end{equation*}
\item \begin{equation*}
\ell_{M}(n)-\ell_M(n-1) = \left\{
        \begin{array}{ll}
            0 & \quad n \leq 0\\
            e_{M}(n)+e_{M}(n+1)+\ldots & \quad n \geq 1.
        \end{array}
    \right.
\end{equation*}
\item for any $n \geq 1$,
\begin{equation*}
e_{M}(n)=\ell_M(n)-\ell_M(n-1)-(\ell_{M}(n+1)-\ell_M(n))\\
=2\ell_M(n)-\ell_M(n+1)-\ell_M(n-1).
\end{equation*}
In particular, the structure of $M$ is determined by its length function.
\end{enumerate}
\end{proposition}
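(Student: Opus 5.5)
By additivity, it suffices to treat a single indecomposable summand and then sum. Both sides of each identity are additive in $M$: the functions $e_M$ and $\ell_M$ send direct sums to sums, and so do the length of $M/T^nM = M\otimes_{k[T]} k[T]/(T^n)$ and the right-hand sides of (1)–(3). So I reduce to $M=V_i=k[T]/(T^i)$, where $e_{V_i}(j)$ equals $1$ if $j=i$ and $0$ otherwise. For $n\ge 1$ we have $V_i/T^nV_i=k[T]/(T^{\min(i,n)})$, hence $\ell_{V_i}(n)=\min(i,n)$. The claimed formula in (1) gives $i\cdot 1$ when $i\le n$ and $n\cdot 1$ when $i> n$, which is exactly $\min(i,n)$. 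For $n\le 0$ both sides vanish by definition. Summing over the summands of $M$ (finitely many, so all sums are finite) proves (1).

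For (2), I take differences. For $n\le 0$ both $\ell_M(n)$ and $\ell_M(n-1)$ are $0$. For $n\ge1$, note that $\ell_{V_i}(n)-\ell_{V_i}(n-1)=\min(i,n)-\min(i,n-1)$. This includes the case $n=1$, where $\ell_{V_i}(0)=0=\min(i,0)$. The difference is $1$ if $i\ge n$ and $0$ otherwise. Summing over summands gives $\sum_{j\ge n}e_M(j)$.

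Statement (3) follows by applying (2) at $n$ and at $n+1$ and subtracting. For $n\ge1$ both indices $n$ and $n+1$ are at least $1$, so the tail sums telescope to $e_M(n)$. The last rewriting, $2\ell_M(n)-\ell_M(n+1)-\ell_M(n-1)$, is algebra. Finally, since $M\cong\bigoplus_i V_i^{e_M(i)}$ by the structure theorem (the preceding proposition), $M$ is determined up to isomorphism by $e_M$, and hence by $\ell_M$.

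There is no real obstacle here. The only points needing care are the boundary convention $\ell_M(n)=0$ for $n\le0$, which makes (2) valid at $n=1$, and the finiteness of the sums, which holds because $M$ is finitely generated.
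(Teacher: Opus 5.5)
Your proposal is correct and follows essentially the same route as the paper, which simply says that (1) follows from the definitions and that (2) and (3) follow from (1). You supply the omitted details by decomposing $M$ into the summands $V_i$, computing $\ell_{V_i}(n)=\min(i,n)$, and then taking first and second differences; your handling of the boundary case $n=1$ via the convention $\ell_M(0)=0$ is exactly what is needed.
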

\begin{proof}
(1) follows from the definition of $\ell_M(n)$ and $e_{M}(n)$; (2) and (3) follow from (1).
\end{proof}
The length function of a $k$-object is usually easier to compute than the multiplicity of each indecomposable components, and we can recover multiplicities from its length function. So, \textit{the role of the length function of a $k$-object is similar to the role of the character of an ordinary representation.}

We characterize all length functions of $k$-objects.
\begin{theorem}
A function $\ell:\mathbb{N}_{\geq 1} \to \mathbb{N}_{\geq 1}$ gives the length function of a nonzero $k$-object if and only if
\begin{enumerate}
\item $\ell$ is increasing;
\item $\ell$ is concave, that is, $2\ell(n+1)\geq \ell(n)+\ell(n+2)$ for $n \geq 0$ where we denote $\ell(0)=0$;
\item $\ell$ is eventually a constant.
\end{enumerate}
\end{theorem}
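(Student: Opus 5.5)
The plan is to go through the explicit formulas of \Cref{5.2 e_M and l_M relation}, proving the two implications separately. Throughout, the function $\ell$ is extended by $\ell(0)=0$ (and $\ell(n)=0$ for $n\le 0$), as in the definition of the length function.

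\emph{Necessity.} Let $M\neq 0$ be a $k$-object. By part (2) of \Cref{5.2 e_M and l_M relation}, for $n\ge 1$ we have $\ell_M(n)-\ell_M(n-1)=\sum_{j\ge n}e_M(j)\ge 0$, so $\ell_M$ is increasing. These first differences form a non-increasing sequence in $n$, since each is a tail sum of nonnegative numbers. Concavity follows, and the endpoint case $n=0$ reads $2\ell(1)\ge \ell(2)$ with $\ell(0)=0$. Part (3) of the same proposition gives this directly as $2\ell_M(n+1)-\ell_M(n)-\ell_M(n+2)=e_M(n+1)\ge 0$ for all $n\ge 0$. Because $M$ is finitely generated and killed by some $T^N$, we have $e_M(j)=0$ for $j>N$. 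Hence the differences vanish for $n>N$, and $\ell_M$ is eventually constant, equal to $\ell(M)$. Finally, the values lie in $\mathbb{N}_{\ge1}$: $\ell_M(1)=\ell(M/TM)\ge 1$ by Nakayama since $M\neq 0$, and monotonicity then gives $\ell_M(n)\ge 1$ for all $n\ge 1$.

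\emph{Sufficiency.} Given $\ell$ satisfying (1)--(3), set $e(n)=2\ell(n)-\ell(n+1)-\ell(n-1)$ for $n\ge 1$. Concavity, applied at index $n-1\ge 0$, gives $e(n)\ge 0$. Since $\ell$ is eventually constant, $e(n)=0$ for all large $n$. So $M=\bigoplus_{n\ge1}V_n^{e(n)}$ is a well-defined $k$-object. It remains to check $\ell_M=\ell$. Put $d(n)=\ell(n)-\ell(n-1)$. Then $e(n)=d(n)-d(n+1)$ and $d(n)\to 0$, so telescoping gives $\sum_{j\ge n}e(j)=d(n)$. By part (2) of \Cref{5.2 e_M and l_M relation}, $\ell_M$ has the same first differences as $\ell$ for every $n\ge1$. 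Both functions vanish at $0$, so $\ell_M=\ell$. Moreover $M\neq0$, since $\ell_M(1)=\ell(1)\ge1$.

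The only point needing care is this telescoping step in the sufficiency direction. It uses that $d(n)\to 0$, which is exactly the eventual constancy hypothesis (3), so that the tail sums of $e$ recover $d$ with no leftover constant. Uniqueness of the $k$-object with a given length function follows from part (3) of the proposition.
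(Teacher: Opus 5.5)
Your proof is correct and is essentially the paper's. Sufficiency is identical: take $e(n)$ to be the second differences, which are nonnegative by concavity and eventually zero, then check $\ell_M=\ell$. You do that check by telescoping first differences, where the paper checks $\sum_i\min\{n,i\}e_i=\ell(n)$ directly. The one variation is in necessity: the paper argues intrinsically via the surjections $T^iM/T^{i+1}M\twoheadrightarrow T^{i+1}M/T^{i+2}M$ given by multiplication by $T$, while you read concavity off $e_M(n+1)\ge 0$ from the decomposition formulas. Both work, and your Nakayama remark covers the codomain $\mathbb{N}_{\geq 1}$, which the paper leaves implicit.
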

\begin{proof}
If $\ell=\ell_M$, it is easy to see $\ell$ is increasing. $\ell$ is convex since $T^iM/T^{i+1}M$ surjects onto $T^{i+1}M/T^{i+2}M$ for any $i \geq 0$. If $T^iM=0$, we see $\ell(j)=\ell(i)$ for any $j \geq i$. So $\ell$ satisfies (1) (2) (3). Conversely if $\ell$ satisfies (1) (2) (3), then we can solve $e(n)=2\ell(n)-\ell(n+1)-\ell(n-1),n \geq 2$ and $e(1)=2\ell(1)-\ell(2)$ from $\ell(n)$. They are nonnegative by convexity of $\ell$. Since $\ell(n)$ is eventually constant, $e(n)$ is eventually zero. So $M=\oplus \delta_n^{e_n}$ is a finite direct sum, so it is a $k$-object. Now it suffices to verify $\sum_i \min\{n,i\}e_i=\ell(n)$, which is straightforward.   
\end{proof}
\subsection{Multilinear forms on $\Gamma$}
In this subsection, we introduce an $s$-fold multilinear map on $\Gamma$ associated with a polynomial $\phi$ of $s$-variables over $k$ without a constant term.

Since $\Gamma$ is the free abelian group with basis $\delta_i, i \geq 1$, to define a multilinear map $\Gamma^s \to \Gamma$, we only need to define it on tuples of basis element $\delta_i$'s. For any $\phi \in k[T_1,T_2,\ldots,T_s]$ without a constant term, we define the multilinear map $B_\phi: \Gamma^s \to \Gamma$ by specifying $B_\phi(\delta_{t_1},\ldots,\delta_{t_s})$ as follows: $$B_\phi(\delta_{t_1},\ldots,\delta_{t_s})=k[T_1,T_2,\ldots,T_s]/(T_1^{t_1},T_2^{t_2},\ldots,T_s^{t_s})$$ 
as a $k$-object with respect to $T=\phi \in k[T_1,\ldots,T_s]$. By convention on the nonpositive power, $B_\phi(\delta_{t_1},\ldots,\delta_{t_s})=0$ if there is $t_i \leq 0$. Here $T$ acts nilpotently since $\phi$ has no constant term. This gives the multilinear form $B_\phi$. The importance of this definition is reflected in the following proposition on $s$-fold tensor product:
\begin{proposition}\label[proposition]{5.2 multilinear form property}
Let $M_i$ be a $k$-object with respect to $f_i$ for $1 \leq i \leq s$. Take $\phi \in k[T_1,T_2,\ldots,T_s]$ and define $B_\phi$ as above. Let $\otimes_k M_i=(\otimes_k)_{1 \leq i \leq s} M_i$ be the $s$-fold tensor product of all $M_i$'s. Then $\phi(\underline{f})$ acts on $ \otimes_k M_i$ which makes $\otimes_k M_i$ a $k$-object with respect to $\phi(\underline{f})$, and as a $k$-object, $[\otimes_k M_i]=B_\phi([M_1],\ldots,[M_s])$.  
\end{proposition}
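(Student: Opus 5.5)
By multilinearity of both sides and additivity of the tensor product over direct sums, I would reduce the claim to indecomposable inputs. Each $M_i$ is a finite direct sum of cyclic $k$-objects $k[f_i]/(f_i^{t})$, and $\otimes_k$ distributes over direct sums. So it suffices to treat $M_i=k[f_i]/(f_i^{t_i})$ with $t_i\ge 1$ for every $i$. Zero summands contribute nothing to either side, which matches the convention $B_\phi(\ldots,\delta_{t},\ldots)=0$ for $t\le 0$.

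In the cyclic case, the key step is to identify the tensor product with a quotient of a polynomial ring. Let $A=k[T_1,\ldots,T_s]$ and define a $k$-algebra map $A\to \otimes_k M_i$ by sending $T_i$ to $1\otimes\cdots\otimes f_i\otimes\cdots\otimes 1$. Since $M_i\cong k[T_i]/(T_i^{t_i})$ as a $k[T_i]$-module, with $f_i$ acting as $T_i$, I would use the standard isomorphism
$$\bigotimes_k k[T_i]/(T_i^{t_i})\cong k[T_1,\ldots,T_s]/(T_1^{t_1},\ldots,T_s^{t_s}).$$
This is an isomorphism of $A$-modules, so in particular of $k[\phi]$-modules. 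Under it, the operator $\phi(\underline f)$, meaning $\phi$ evaluated on the commuting operators $f_i$ acting in the $i$-th factor, corresponds to multiplication by $\phi(T_1,\ldots,T_s)$.

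It remains to check that $\otimes_k M_i$ is a $k$-object and to compare classes. Because $\phi$ has no constant term, $\phi(\underline f)$ lies in the ideal generated by the nilpotent commuting operators $1\otimes\cdots\otimes f_i\otimes\cdots\otimes 1$. Hence it is nilpotent: a high enough power of $\phi$ lies in $(T_1^{t_1},\ldots,T_s^{t_s})$. The space is also finite dimensional, so $\otimes_k M_i$ is a $k$-object with respect to $\phi(\underline f)$. Isomorphic $k[T]$-modules have the same class, so $[\otimes_k M_i]=B_\phi(\delta_{t_1},\ldots,\delta_{t_s})$ directly from the definition of $B_\phi$.

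For general $M_i$, I would decompose each as $\bigoplus_j k[f_i]/(f_i^{t_{ij}})$ and expand the tensor product into a direct sum of tensor products of cyclic pieces. The action of $\phi(\underline f)$ respects this decomposition, since each $f_i$ preserves each summand of $M_i$. Taking classes and applying multilinearity of $B_\phi$ then gives the formula.

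The only real point of care is making precise that the $k[T]$-action by $\phi(\underline f)$ on the tensor product is the evaluation of $\phi$ at the commuting operators $f_i$, so that the isomorphism is $\phi$-equivariant. Everything else is routine bookkeeping.
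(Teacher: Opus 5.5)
Your proof is correct and is essentially the natural argument. You reduce, via multilinearity and the fact that each $f_i$ preserves the summands of $M_i$, to cyclic $k$-objects, and then identify $\bigotimes_k k[T_i]/(T_i^{t_i})$ with $k[T_1,\ldots,T_s]/(T_1^{t_1},\ldots,T_s^{t_s})$ so that $\phi(\underline f)$ becomes multiplication by $\phi$, which is exactly how $B_\phi$ is defined. The paper gives no proof of its own here and simply defers to \cite[Theorem 5.7]{arxivAnalysis}, whose content amounts to this same identification.
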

This is proved in \cite[Theorem 5.7]{arxivAnalysis}. Denote the coefficient of the bilinear form 
$B_\phi$ by $$B_\phi(\mathbf{t},r)=e_{k[T_1,T_2,\ldots,T_s]/(T_1^{t_1},T_2^{t_2},\ldots,T_s^{t_s}),\phi}(r).$$ 
That is, we also view $B_\phi$ as a map $\mathbb{Z}^s\times \mathbb{Z}_{>0} \to \mathbb{Z}$ by abusing the notation. Define
$$D_\phi(\mathbf{t},r)=\ell_{k[T_1,T_2,\ldots,T_s]/(T_1^{t_1},T_2^{t_2},\ldots,T_s^{t_s}),\phi}(r)=\ell(k[T_1,T_2,\ldots,T_s]/(T_1^{t_1},T_2^{t_2},\ldots,T_s^{t_s},\phi^r)).$$ 
From the definition we see for any $\mathbf{t} \in \mathbb{Z}^s$, $B_\phi(\delta_{t_1},\ldots,\delta_{t_s})=\sum_{r \geq 1} B_\phi(\mathbf{t},r)\delta_r$ and if 
$r \geq 1$, $B_\phi(\mathbf{t},r)=2D_\phi(\mathbf{t},r)-D_\phi(\mathbf{t},r+1)-D_\phi(\mathbf{t},r-1)$.
\begin{corollary}[\cite{arxivAnalysis}, Proposition 5.8]
Let $M_i$ be a $k$-object with respect to $f_i$ for $1 \leq i \leq s$. Take $\phi \in k[T_1,\ldots,T_s]$. Then
\begin{enumerate}
\item $e_{B_\phi([M_1],\ldots,[M_s]),\phi(\underline{f})}(r)=\sum_{\mathbf{t} \geq 1}\prod_{1 \leq i \leq s}e_{M_i,f_i}(t_i) B_\phi(\mathbf{t},r)$.
\item $\ell_{B_\phi([M_1],\ldots,[M_s]),\phi(\underline{f})}(r)=\sum_{\mathbf{t} \geq 1}\prod_{1 \leq i \leq s}e_{M_i,f_i}(t_i) D_\phi(\mathbf{t},r)$.
\end{enumerate}
\end{corollary}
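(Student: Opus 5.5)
The plan is to reduce both formulas to the basic case of indecomposable objects, using multilinearity. Write $[M_i]=\sum_{t\geq 1}e_{M_i,f_i}(t)\delta_t$ for each $i$. By \Cref{5.2 multilinear form property}, the class of $\otimes_k M_i$ as a $k$-object with respect to $\phi(\underline{f})$ is $B_\phi([M_1],\ldots,[M_s])$. Since $B_\phi$ is multilinear on $\Gamma^s$ by construction, we can expand
$$B_\phi([M_1],\ldots,[M_s])=\sum_{\mathbf{t}\geq 1}\prod_{1\leq i\leq s}e_{M_i,f_i}(t_i)\,B_\phi(\delta_{t_1},\ldots,\delta_{t_s}).$$
This is a finite sum, because each $e_{M_i,f_i}$ has finite support.

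For (1), we use that $\gamma\mapsto e_\gamma(r)$ is additive on $\Gamma$: it is the $\delta_r$-coordinate in the basis $\{\delta_i\}$. Applying it to the expansion above, we only need $e_{B_\phi(\delta_{t_1},\ldots,\delta_{t_s})}(r)=B_\phi(\mathbf{t},r)$. This is exactly the definition of $B_\phi(\mathbf{t},r)$, namely the multiplicity of $\delta_r$ in $k[T_1,\ldots,T_s]/(T_1^{t_1},\ldots,T_s^{t_s})$ regarded as a $k$-object with respect to $\phi$. Since $B_\phi$ is well defined on $\Gamma$, it does not matter that the left side of (1) is a class rather than a specific module.

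For (2), we argue the same way, using that $\gamma\mapsto\ell_\gamma(r)$ is additive on $\Gamma$. This holds because the length function of a direct sum is the sum of the length functions, and the extension $\ell_\gamma=\ell_M-\ell_N$ is well defined, as noted after the definition. It then remains to check $\ell_{B_\phi(\delta_{t_1},\ldots,\delta_{t_s})}(r)=D_\phi(\mathbf{t},r)$, which is again the definition: $\ell(A/\phi^rA)$ with $A=k[T_1,\ldots,T_s]/(T_1^{t_1},\ldots,T_s^{t_s})$. One could also derive (2) from (1) using \Cref{5.2 e_M and l_M relation}(1), writing $\ell(r)=\sum_n\min\{n,r\}e(n)$ and exchanging the finite sums.

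The only delicate point is a notational one. We must check that the $k$-object structure on $\otimes_k M_i$ via $\phi(\underline{f})$ is the one \Cref{5.2 multilinear form property} refers to, so that its class really equals the multilinear expression. That proposition gives this directly. Boundary conventions, where some $t_i\leq 0$ gives zero, do not arise, since the sum runs over $\mathbf{t}\geq 1$.
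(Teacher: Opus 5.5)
Your argument is correct and is essentially the intended one. The paper gives no proof of its own: it cites the companion paper and presents the statement as an immediate corollary of the multilinear-form proposition. The content is exactly your expansion of $B_\phi([M_1],\ldots,[M_s])$ by multilinearity, followed by the additive functionals $\gamma\mapsto e_\gamma(r)$ and $\gamma\mapsto \ell_\gamma(r)$, with $B_\phi(\mathbf{t},r)$ and $D_\phi(\mathbf{t},r)$ read off from their definitions; your alternative derivation of (2) from (1) via $\ell(r)=\sum_n \min\{n,r\}\,e(n)$ is also valid.
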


\subsection{Tensor products and modular representations of cyclic groups}
In this subsection, we consider the tensor product of $k$-objects, which gives a ring structure on $\Gamma$. This tensor product comes from the bilinear form associated with the polynomial $T_1+T_2$ introduced in last subsection. We will denote this tensor product by $\tilde{\otimes}_k$ temporarily; later we will show this is equivalent to the tensor product of modular representations, and use the terminology $\otimes_k$ or $\otimes$ again.   
\begin{definition}
The tensor product of two $k$-objects $M,N$ over $k$, denoted by $M\tilde\otimes_kN$, is a $k$-object with underlying space $M\otimes_kN$ and $T$-action given by $T(m\otimes n)=Tm\otimes n+m\otimes Tn$. 
\end{definition}
\begin{proposition}
\begin{enumerate}
\item $[M\tilde\otimes_kN]=B_{T_1+T_2}([M],[N])$.
\item $\Gamma$ is a unital commutative ring with unit $[k]=\delta_1$, addition $[M]+[N]=[M\oplus N]$ and multiplication $[M][N]=[M\tilde\otimes_k N]$.
\item $\Gamma_e$ is the subring of $\Gamma$ generated by classes of $k$-objects annihilated by $T^{p^e}$.
\end{enumerate}
\end{proposition}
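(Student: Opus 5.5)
Part (1) is immediate from \Cref{5.2 multilinear form property} applied with $s=2$, $\phi=T_1+T_2$, $f_1=f_2=T$. The $k$-object $M\tilde\otimes_k N$ is exactly $M\otimes_k N$ with $\phi(\underline f)=T\otimes 1+1\otimes T$ acting, so $[M\tilde\otimes_k N]=B_{T_1+T_2}([M],[N])$. In particular the product $[M][N]:=[M\tilde\otimes_kN]$ is the restriction to classes of a $\mathbb{Z}$-bilinear map $\Gamma\times\Gamma\to\Gamma$. It extends uniquely to all of $\Gamma$, since every element is $[M]-[N]$, and it is well defined and biadditive because $\tilde\otimes_k$ distributes over $\oplus$ up to isomorphism of $k[T]$-modules.

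For (2), I will check the ring axioms on actual $k$-objects and then extend by bilinearity. Commutativity comes from the swap $m\otimes n\mapsto n\otimes m$, which is a $k$-linear isomorphism commuting with the $T$-actions. Associativity comes from the canonical isomorphism $(M\otimes N)\otimes P\cong M\otimes(N\otimes P)$; on both sides $T$ acts as $T\otimes1\otimes1+1\otimes T\otimes1+1\otimes1\otimes T$. For the unit, $k=V_1$ has $T$ acting by $0$, so $k\otimes M\cong M$ with the same $T$-action, which gives $\delta_1$ as identity. Since each identity holds on generators $[M]$ and both sides are multilinear, it holds on all of $\Gamma$.

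For (3), I first show that $\Gamma_e$ is closed under multiplication; this is the step needing characteristic $p$. If $T^{p^e}M=0$ and $T^{p^e}N=0$, then on $M\otimes N$ the operators $T\otimes 1$ and $1\otimes T$ commute. Hence
$$(T\otimes1+1\otimes T)^{p^e}=T^{p^e}\otimes 1+1\otimes T^{p^e}=0$$
by the Frobenius identity in characteristic $p$. By the earlier proposition characterizing $\Gamma_e$, this means $[M\tilde\otimes N]\in\Gamma_e$. Since $\Gamma_e$ is spanned by classes $\delta_i$, $i\le p^e$, bilinearity gives closure on the whole of $\Gamma_e$. It also contains $\delta_1$, so it is a subring. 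Conversely, $\Gamma_e$ is additively generated by classes of $k$-objects killed by $T^{p^e}$, namely the $\delta_i$ with $i\le p^e$. So the subring they generate contains $\Gamma_e$ and is contained in it, hence equals $\Gamma_e$. The only nonformal point is this Frobenius step; everything else is bookkeeping of isomorphisms.
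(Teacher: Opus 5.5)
Your proof is correct and follows the paper's. Part (1) is the definition of $B_{T_1+T_2}$ (equivalently, the proposition on $s$-fold tensor products with $\phi=T_1+T_2$), and part (3) rests on exactly the paper's identity $(T\otimes 1+1\otimes T)^{p^e}=T^{p^e}\otimes 1+1\otimes T^{p^e}$. The only difference is in (2): the paper simply cites Han--Monsky, whereas you check commutativity, associativity and the unit directly through the canonical swap, associativity and unit isomorphisms, then extend by bilinearity, which is a valid self-contained replacement for the citation.
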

\begin{proof}
(1) is true by definition. (2) is proved in \cite{HanMonsky}. (3) is true since $T^{p^e}(m\otimes n)=T^{p^e}m\otimes n+m\otimes T^{p^e}n$.    
\end{proof}
The set of $k$-objects annihilated by $T^{p^e}$ corresponds to modular representations of the cyclic $p$-group $\mathbb{Z}/p^e\mathbb{Z}$. Actually, if $G=\langle g \rangle=\mathbb{Z}/p^e\mathbb{Z}$, then $k[G]=k[g]/(g^{p^e}-1)\cong k[T]/(T^{p^e})$ via the isomorphism $g \to 1+T$. In this sense, a $G$-representation $V$ is a $k$-object with respect to $T=g-1$ and $G$ acts on a $k$-object via $gx=(T+1)x$.
\begin{lemma}
Let $M,N$ be two $k$-objects annihilated by $T^{p^e}$ and view them as $G$-representations via $g=1+T$. Then the $T$-action on the tensor product $M\otimes_k N$ as $G$-representations is given by
$$T(m\otimes n)=Tm\otimes Tn+Tm\otimes n+m\otimes Tn.$$
In other words, the $G$-representation tensor product satisfies $[M\otimes_k N]=B_{T_1T_2+T_1+T_2}([M],[N])$.
\end{lemma}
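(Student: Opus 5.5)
The computation is direct: we compute the action of $g$ on $M\otimes_k N$ and then translate into the language of $k$-objects via \Cref{5.2 multilinear form property}. In the group representation tensor product, $g$ acts diagonally, $g(m\otimes n)=gm\otimes gn$. Writing $g=1+T$ on each factor, we get
\[
g(m\otimes n)=(m+Tm)\otimes(n+Tn)=m\otimes n+Tm\otimes n+m\otimes Tn+Tm\otimes Tn .
\]
Since the $T$-action on the tensor product representation is, by the dictionary above, the action of $g-1$, subtracting $m\otimes n$ gives the claimed formula
\[
T(m\otimes n)=Tm\otimes Tn+Tm\otimes n+m\otimes Tn .
\]
As a check, $T^{p^e}=g^{p^e}-1=0$ on $M\otimes_k N$, so the tensor product is again a $k$-object annihilated by $T^{p^e}$.

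For the second assertion, let $T_1=T\otimes 1$ and $T_2=1\otimes T$ act on $M\otimes_k N$. These commute, and $T_1T_2(m\otimes n)=Tm\otimes Tn$. The computation above therefore says that the $T$-action on the $G$-tensor product is exactly $\phi(T_1,T_2)$ with
\[
\phi=T_1T_2+T_1+T_2\in k[T_1,T_2],
\]
which has no constant term. We apply \Cref{5.2 multilinear form property} with $s=2$, $M_1=M$ with respect to $f_1=T$, and $M_2=N$ with respect to $f_2=T$. Viewing $M\otimes_k N$ as a $k[T_1,T_2]$-module in the natural way, $\phi(\underline f)$ acts as described, and the proposition yields $[M\otimes_k N]=B_\phi([M],[N])$ as a $k$-object with respect to $\phi(\underline f)$, i.e.\ with respect to the $G$-action's $T=g-1$.

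There is no real obstacle here. The only point needing care is the identification of the multilinear-form setup: in \Cref{5.2 multilinear form property}, ``$\phi(\underline f)$ acting on $\otimes_k M_i$'' means $T_i$ acts as $f_i$ on the $i$-th factor and as the identity on the others. This is precisely how $T_1$ and $T_2$ act above, so the proposition applies verbatim.
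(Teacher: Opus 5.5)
Your proof is correct and follows the paper's argument: expand $g(m\otimes n)=(1+T)m\otimes(1+T)n$ and subtract $m\otimes n$. You then read off $\phi=T_1T_2+T_1+T_2$ through the multilinear-form proposition, a step the paper leaves implicit. Your side check that $T^{p^e}=g^{p^e}-1$ annihilates $M\otimes_k N$ is a harmless addition.
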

\begin{proof}
By definition $g(m\otimes n)=gm\otimes gn$, which means $(1+T)(m\otimes n)=(1+T)m\otimes (1+T)n$, so the equality holds.   
\end{proof}
\begin{lemma}\label[lemma]{2.11}
Let $\phi \in k[T_1,T_2,\ldots,T_s]$ be a polynomial, $\phi=a_1T_1+\ldots+a_sT_s+\phi',\phi' \in (T_1,\ldots,T_s)^2$ and $a_i \neq 0$ for all $i$. Then $D_\phi(\mathbf{t},r)=D_{T_1+\ldots+T_s}(\mathbf{t},r)$ and $B_\phi(\mathbf{t},r)=B_{T_1+\ldots+T_s}(\mathbf{t},r)$ for all $(\mathbf{t},r) \in \mathbb{N}^{s+1}_{\geq 1}$.   
\end{lemma}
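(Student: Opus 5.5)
The plan is to find an automorphism of the ring $A=k[T_1,\ldots,T_s]/(T_1^{t_1},\ldots,T_s^{t_s})$, or more precisely an isomorphism of $k[T]$-modules, carrying the action of $\phi$ to the action of $T_1+\ldots+T_s$. By \Cref{5.2 e_M and l_M relation}(3), $B$ is determined by $D$, so it is enough to prove $D_\phi(\mathbf{t},r)=D_{T_1+\ldots+T_s}(\mathbf{t},r)$, that is,
$$\ell\bigl(A/\phi^rA\bigr)=\ell\bigl(A/(T_1+\ldots+T_s)^rA\bigr)\quad\text{for all } r\ge 1.$$

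First I reduce to the case $a_i=1$ for all $i$. The substitution $T_i\mapsto a_i^{-1}T_i$ is a $k$-algebra automorphism of $k[T_1,\ldots,T_s]$. It preserves each ideal $(T_i^{t_i})$, so it induces an automorphism of $A$. It sends $\phi$ to a polynomial of the form $T_1+\ldots+T_s+\phi''$ with $\phi''\in(T_1,\ldots,T_s)^2$, and this does not change the length of the quotient. So I may assume $\phi=T_1+\ldots+T_s+\phi'$.

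The key step is to handle $\phi'$. A change of variables $T_1\mapsto T_1-\phi'$ is not allowed, because it does not preserve the ideal $(T_1^{t_1})$. Instead I use the $k$-object structure. Let $\psi=T_1+\ldots+T_s$ and $\mathfrak m=(T_1,\ldots,T_s)$. Then $\phi=\psi+\phi'$ with $\phi'\in\mathfrak m^2$, and $A$ is a $k[\psi]$-module. The aim is to show that $\phi A=\psi A$ up to a unit, or more generally that $\phi^rA$ and $\psi^rA$ have equal length for every $r$. It would suffice to write $\phi=\psi u$ in $A$ with $u$ a unit, which gives $\phi^rA=\psi^rA$ directly. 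But $\phi'$ need not be divisible by $\psi$. The monomial $T_1T_2$ is an example: in $k[T_1,T_2]/(T_1^{t_1},T_2^{t_2})$ there is no reason for $\psi\mid T_1T_2$.

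The fallback is to work with the actual isomorphism type. Here $A\cong\bigotimes_i k[T_i]/(T_i^{t_i})$. The statement is then a "Green ring is independent of the group law" result: for the formal group law $T_1+T_2+T_1T_2$ versus $T_1+T_2$, this is the known equivalence between Han--Monsky and Green. For general $\phi$ I would argue by semicontinuity and deformation. Put $\phi_\lambda=\psi+\lambda\phi'$. The Jordan type of multiplication by $\phi_\lambda$ on the finite-dimensional space $A$ is constant on a Zariski open set of $\lambda$, and the ranks of $\phi_\lambda^r$ are lower semicontinuous.

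The main obstacle is proving that the generic Jordan type coincides with the type at $\lambda=0$, i.e.\ that $\psi$ already achieves the maximal rank $\operatorname{rank}\phi_\lambda^r$ for every $r$. I would attack this by filtering $A$ by $\mathfrak m$-adic degree. On the associated graded ring, which is $A$ itself with its grading, multiplication by $\phi$ has leading form $\psi$. A standard filtration argument then gives $\dim\phi^rA\ge\dim\psi^rA$, since the initial forms of $\phi^rA$ contain $\psi^r\operatorname{gr}A$. For the reverse inequality, the plan is to use a triangular automorphism of $A$ of the form $T_i\mapsto T_i+(\text{higher order})$ that respects the ideals $(T_i^{t_i})$. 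This includes maps of the form $T_i\mapsto T_iv_i$ with $v_i$ units, which is the case where each term of $\phi'$ is divisible by some $T_i$, and every term is. The idea is to absorb $\phi'=\sum_i T_ig_i$ by setting $T_i\mapsto T_i(1+g_i)$ iteratively and to solve order by order in $\mathfrak m$-adic degree. That process terminates because $A$ is Artinian. It would show that $\phi$ and $\psi$ are conjugate under an automorphism of $A$, which gives equality of all the lengths.
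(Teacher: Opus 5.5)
Your proposal is correct and rests on the same idea as the paper: write $\phi'=\sum_i T_ig_i$ with $g_i\in\mathfrak m$, apply the unit-twisting substitution $T_i\mapsto T_i(1+g_i)$, which preserves each ideal $(T_i^{t_i})$, and combine it with a linear rescaling. The paper does this in $k[[T_1,\ldots,T_s]]$ with a triangular choice of the $f_i$; you do it directly on $A$.

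The deformation, semicontinuity and filtration-inequality detour is unnecessary, and so is the iteration. After your normalization $a_i=1$, the single substitution sends $T_1+\cdots+T_s$ exactly to $T_1+\cdots+T_s+\phi'=\phi$. It is also an automorphism of $A$: it induces the identity on $\mathfrak m/\mathfrak m^2$, so it is surjective because $\mathfrak m$ is nilpotent, and a surjective linear map of the finite-dimensional space $A$ is bijective.
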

\begin{proof}
Since $B_\phi$ depends on $D_\phi$, it suffices to prove the equality for $D_\phi$. We need to show
$$\ell(k[T_1,\ldots,T_s]/(T_1^{t_1},\ldots,T_s^{t_s},\phi^r))=\ell(k[T_1,\ldots,T_s]/(T_1^{t_1},\ldots,T_s^{t_s},(T_1+\ldots+T_s)^r)).$$
Both sides are annihilated by a power of $(T_1,\ldots,T_s)$, so both lengths stay the same after completion, and it suffices to prove
$$\ell(k[[T_1,\ldots,T_s]]/(T_1^{t_1},\ldots,T_s^{t_s},\phi^r))=\ell(k[[T_1,\ldots,T_s]]/(T_1^{t_1},\ldots,T_s^{t_s},(T_1+\ldots+T_s)^r)).$$
Now by assumption on $\phi$ we can write
$$\phi=a_1T_1(1+f_1(T_1,\ldots,T_s))+a_2T_2(1+f_2(T_2,\ldots,T_s))+\ldots+a_sT_s(1+f_s(T_s))$$
for some polynomials $f_1,\ldots,f_s$ without constant terms. Note that there is an automorphism $\Phi$ of $k[[T_1,\ldots,T_s]]$ that is identity on $k$, maps $(T_1,\ldots,T_s)$ to itself (hence is continuous with respect to the adic topology) and sends $T_i$ to $T_i(1+f_i)$. 
\begin{align*}
\ell(k[[T_1,\ldots,T_s]]/(T_1^{t_1},\ldots,T_s^{t_s},\phi^r))=\\
\ell(k[[T_1,\ldots,T_s]]/((T_1(1+f_1))^{t_1},\ldots,(T_s(1+f_s))^{t_s},\phi^r))=\\
\ell(k[[T_1,\ldots,T_s]]/(T_1^{t_1},\ldots,T_s^{t_s},(a_1T_1+\ldots+a_sT_s)^r))=\\
\ell(k[[T_1,\ldots,T_s]]/(T_1^{t_1},\ldots,T_s^{t_s},(T_1+\ldots+T_s)^r)).
\end{align*}
The first equality holds since $1+f_i$ is a unit in $k[[T_1,\ldots,T_s]]$, the second and third equalities hold by applying $\Phi^{-1}$ and a linear transformation.
\end{proof}
\begin{theorem}\label{2.12}
The tensor products of two $k$-objects as $k$-objects and $G$-representations are isomorphic. Therefore, $\Gamma_e$ is the representation ring of $\mathbb{Z}/p^e\mathbb{Z}$ over a field of characteristic $p$.   
\end{theorem}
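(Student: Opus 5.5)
The plan is to compare the two tensor products through their associated multilinear forms and then apply \Cref{2.11}. Let $M,N$ be $k$-objects annihilated by $T^{p^e}$. By the proposition on tensor products of $k$-objects, $[M\tilde\otimes_k N]=B_{T_1+T_2}([M],[N])$. By the preceding lemma, the $G$-representation tensor product has $T$ acting by $T_1T_2+T_1+T_2$, so $[M\otimes_k N]=B_{T_1T_2+T_1+T_2}([M],[N])$.

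The polynomial $\phi=T_1+T_2+T_1T_2$ has the form required in \Cref{2.11}: $s=2$, $a_1=a_2=1\neq 0$, and $\phi'=T_1T_2\in(T_1,T_2)^2$. Hence $B_\phi(\mathbf{t},r)=B_{T_1+T_2}(\mathbf{t},r)$ for all $\mathbf{t}\ge 1$ and $r\ge1$. For nonpositive $t_i$, both forms vanish by the convention on nonpositive powers.

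Next, expand $[M]=\sum e_M(t_1)\delta_{t_1}$ and $[N]=\sum e_N(t_2)\delta_{t_2}$. Using bilinearity, or part (1) of the corollary from \cite{arxivAnalysis}, the multiplicity of $V_r$ is
$$\sum_{\mathbf{t}\ge1} e_M(t_1)e_N(t_2)B(\mathbf{t},r),$$
and this is the same whether $B=B_\phi$ or $B=B_{T_1+T_2}$. So the two tensor products have identical multiplicity functions. Since $k[T]$ is a PID, a $k$-object is determined up to isomorphism by its multiplicity function, and therefore $M\tilde\otimes_k N\cong M\otimes_k N$ as $k[T]$-modules, hence as $G$-modules via $g=1+T$.

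For the second assertion, the isomorphism $k[G]\cong k[T]/(T^{p^e})$ identifies $G$-representations with $k$-objects annihilated by $T^{p^e}$. Direct sums correspond, and by the first part tensor products correspond as well. The trivial representation $k$ corresponds to $V_1$, which is the unit $\delta_1$. Thus the representation ring of $\mathbb{Z}/p^e\mathbb{Z}$ is the subring of $\Gamma$ spanned by $\delta_1,\dots,\delta_{p^e}$, which is $\Gamma_e$ by the earlier proposition.

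The only substantive input is \Cref{2.11}, which is already proved. The one point that needs care is confirming that the abstract identity of classes in $\Gamma$ really yields an isomorphism of modules. This follows from the uniqueness of the decomposition into the $V_i$, as explained above.
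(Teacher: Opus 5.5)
Your proof is correct and follows the same route as the paper. You express both tensor products through the bilinear forms $B_{T_1+T_2}$ and $B_{T_1T_2+T_1+T_2}$ and use \Cref{2.11} to show their structure constants $B(\mathbf{t},r)$ coincide. Your additional remarks, that a $k$-object is determined up to isomorphism by its multiplicities over the PID $k[T]$ and that $\Gamma_e$ is therefore the Green ring, spell out steps the paper's short proof leaves implicit.
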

\begin{proof}
The tensor product of $k$-objects is given by $B_{T_1+T_2}$ and the tensor product of $G$-representations is given by $B_{T_1T_2+T_1+T_2}$. By \Cref{2.11}, $B_{T_1+T_2}(t_1,t_2,t_3)=B_{T_1T_2+T_1+T_2}(t_1,t_2,t_3)$ and $B_\phi:\Gamma\times\Gamma \to \Gamma$ is determined by the coefficients $B_\phi(t_1,t_2,t_3)$. Thus $B_{T_1+T_2}=B_{T_1T_2+T_1+T_2}$, and we are done.  
\end{proof}
The above theorem tells us that we can discard the notation $\tilde{\otimes}_k$ and say $M\otimes_k N$ is also the tensor product of $M,N$ as $k$-objects.
\begin{remark}
For a non-cyclic group $G$, the analogous statement for \Cref{2.12} fails. We consider the following example.

Let $G=\mathbb{Z}/3\times\mathbb{Z}/3$ over a field $k$ of characteristic $3$. We see $R=kG=k[s,t]/(s^3,t^3)$. Let $R_1=k[s_1,t_1]/(s_1^3,t_1^3),R_2=k[s_2,t_2]/(s_2^3,t_2^3),R_3=R_1\otimes_kR_2=k[s_1,t_1,s_2,t_2]/(s_1^3,t_1^3,s_2^3,t_2^3)$. Let $M=k[s,t]/(s^3,t^3,s+t)$ be an $R$-module. Let $M_1=k[s_1,t_1]/(s_1^3,t_1^3,s_1+t_1),M_2=k[s_2,t_2]/(s_2^3,t_2^3,s_2+t_2)$. Let $\varphi:R \to R_3,s \to s_1+s_2,t \to t_1+t_2$, $\psi:R \to R_3,s \to s_1+s_2+s_1s_2,t \to t_1+t_2+t_1t_2$. We see $M_1\otimes_kM_2$ is an $R_3=R_1\otimes_kR_2$-module. The two tensor products $\otimes_k,\tilde{\otimes}_k$ correspond to the following two pushforwards of $M_3$:
$$M\tilde\otimes_kM=\varphi_*M_3,M\otimes_k M=\psi_*M_3.$$
We compute
\begin{align*}
\varphi_*(M_3)\otimes_R R/(s,t)=\varphi_*(M_3/(\varphi(s),\varphi(t))M_3)\\
=\varphi_*k[s_1,t_1,s_2,t_2]/(s_1^3,t_1^3,s_2^3,t_2^3,s_1+t_1,s_2+t_2,s_1+s_2,t_1+t_2)\\
\overset{s_1 \to -s_2,t_1\to -t_2}{=}\varphi_*k[s_2,t_2]/(s_2^3,t_2^3,s_2+t_2).
\end{align*}
Since $\varphi_*$ does not change the length, this module has length $3$.

Note that $(1+s_2)^{-1}$ and $(1+t_2)^{-1}$ make sense in $R_2$, so
\begin{align*}
\psi_*(M_3)\otimes_R R/(s,t)=\psi_*(M_3/(\psi(s),\psi(t))M_3)\\
=\psi_*k[s_1,t_1,s_2,t_2]/(s_1^3,t_1^3,s_2^3,t_2^3,s_1+t_1,s_2+t_2,s_1+s_2+s_1s_2,t_1+t_2+t_1t_2)\\
\overset{s_1 \to -s_2(1+s_2)^{-1},t_1\to -t_2(1+t_2)^{-1}}{=}\psi_*k[s_2,t_2]/(s_2^3,t_2^3,s_2+t_2,s_2(1+s_2)^{-1}+t_2(1+t_2)^{-1})\\
=\psi_*k[s_2,t_2]/(s_2^3,t_2^3,s_2+t_2,s_2(1+t_2)+t_2(1+s_2))\\
=\psi_*k[s_2,t_2]/(s_2^3,t_2^3,s_2+t_2,s_2t_2).
\end{align*}
This module has length $2$. Therefore, the two tensor products $\otimes_k$ and $\tilde{\otimes}_k$ are distinct.
\end{remark}
\subsection{Operations on representations}
In this subsection, we recall 3 operations on representations. Let $H \subset G$ be a subgroup.
\begin{enumerate}
\item If $V$ is a $G$-representation, the restriction on $H$ gives an $H$-representation, denoted by $\operatorname{Res}^H_GV$.
\item If $W$ is an $H$-representation, $\operatorname{Ind}^G_HW=kG\otimes_{kH}W$ is the induction of $W$, which is a $G$-representation. We also denote it by $W\uparrow_H^G$.
\item If $K \lhd G$ and $V$ is an $H=G/K$-representation, we view it as a $G$-representation via $gx=(g+K)x$. The $G$-representation we get is called the inflation of $V$, denoted by $\operatorname{Inf}^G_HV$. If there is no confusion, we may also say $V$ is a $G$-representation by abusing the notation.
\end{enumerate}
We omit the indices $G,H$ if they are clear from context.

Let $q$ be a power of $p$, $G=\mathbb{Z}/p^eq\mathbb{Z}$ and $H=\mathbb{Z}/p^e\mathbb{Z}$, then there is a unique injection $H \hookrightarrow{}G$ and there is a unique surjection $G \twoheadrightarrow H$. The first map gives the restriction and induction, and the second map gives the inflation. Let $V_a$ be the $a$-dimensional indecomposable $k$-object.
\begin{enumerate}
\item If $a \leq p^e$, view $V_a$ as an $H$-representation, we have $\operatorname{Ind}^G_HV_a=V_{aq}$.
\item If $a \leq p^eq$ and $a=bq+r$, $0 \leq r \leq q-1$, view $V_a$ as a $G$-representation, we have $\operatorname{Res}^H_GV_a=(q-r)V_{b}+rV_{b+1}$.
\item If $a \leq p^e$, view $V_a$ as an $H$-representation, we have $\operatorname{Inf}^G_HV_a=V_{a}$.
\end{enumerate}
We also say the above equalities hold when $V_a$ is replaced with its class $\delta_a$, so $\operatorname{Ind}^G_H,\operatorname{Res}^H_G,\operatorname{Inf}^G_H$ become maps between $\Gamma_e$'s.
\subsection{$\Gamma$ and axioms for a representation ring}
From \cite{BensonBanach}, we see that to study the asymptotic behavior of tensor products of representations, we can embed representation rings into a Banach algebra and study the spectrum of elements in the Banach algebra. When the Banach algebra is semisimple, we can recover the ring structure and spectrums from the Gelfand transformation of completion. This is the case where $G=\mathbb{Z}/p^e\mathbb{Z}$ since $\Gamma_e$ is semisimple. 

In \cite{BensonBanach}, Benson gave an axiomatic definition of a representation ring, that is, a ring with a free $\mathbb{Z}$-basis $x_i,i \in \mathfrak{I}$ such that the product $x_ix_j=\sum_{i,j,k}c_{i,j,k}x_k$ for some $c_{i,j,k}\in\mathbb{N}$ and for each pair $(i,j)$, this is a finite sum, plus some axioms on the dual, the trivial representation $\mathds{1}$ and the regular representation $\rho$. The representation rings of finite groups are motivating examples of a general representation ring. 

Now we compare the ring $\Gamma$ with Benson's axioms. We see $\Gamma$ is not a representation ring, but it satisfies most of the axioms.
\begin{enumerate}
\item $\Gamma_e$'s satisfy the axioms for representation rings since they are representation rings of groups. They form a filtration of $\Gamma$: $\Gamma_1 \subset \Gamma_2 \subset \ldots \subset \Gamma$, and $\Gamma=\cup_e\Gamma_e$.
\item The maps $\Gamma_e \to \Gamma$ for different $e$'s are compatible with inflations.
\item The dimension function is compatible with the embeddings $\Gamma_e \subset \Gamma_{e'}$. Thus the dimension function can be defined on $\Gamma$.
\item $\Gamma$ satisfies all the axioms on duals, $\mathds{1}$, and dimension. This is true since all these axioms are verified on subrings, $\Gamma_e \to \Gamma$ is an embedding such that $\Gamma=\cup_e\Gamma_e$. However, there is no element $\rho \in \Gamma$ satisfying the axiom on regular representations for $\Gamma$. What we have is a sequence of elements $\rho_e$ such that $\rho_e x=\dim x\cdot \rho_e$ for any $x \in \Gamma_e$, and $\Gamma_e$'s form a filtration of $\Gamma$.
\end{enumerate}
The fact (3) leads to the following result.
\begin{theorem}
Consider the norm on $\Gamma$ defined by the dimension, that is, a norm satisfying $\|\sum a_i\delta_i\|=\sum |a_i|\dim \delta_i=\sum |a_i|i$. Then the completion $\hat{\Gamma}$ of $\Gamma$ is a Banach $*$-algebra with product induced from $\Gamma$ and $*$ being the identity map.    
\end{theorem}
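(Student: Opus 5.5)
The key fact to establish is submultiplicativity of the norm on $\Gamma$, that is, $\|xy\|\leq\|x\|\|y\|$. Once we have it, the multiplication on $\Gamma$ extends by continuity to the completion $\hat\Gamma$, and $\hat\Gamma$ is a Banach algebra. We then check the $*$-axioms for the identity map.

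First, note that $\|\cdot\|$ is the $\ell^1$-norm on the free abelian group $\Gamma$ with weights $i=\dim\delta_i$. It is therefore a genuine norm on $\mathbb{R}\otimes\Gamma$, and its completion is the weighted space $\ell^1(\mathbb{Z}_{>0},i)$, which is Banach. For submultiplicativity, we reduce to basis elements. Write $x=\sum a_i\delta_i$ and $y=\sum b_j\delta_j$, so that $xy=\sum_{i,j}a_ib_j\,\delta_i\delta_j$. The product $\delta_i\delta_j=[V_i\otimes_kV_j]=\sum_r B_{T_1+T_2}((i,j),r)\delta_r$ has nonnegative coefficients $B(i,j,r)$, since it is the class of an actual $k$-object. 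Hence
\[
\|\delta_i\delta_j\|=\sum_r B(i,j,r)\,r=\dim_k(V_i\otimes_kV_j)=ij=\|\delta_i\|\|\delta_j\|.
\]
This is exactly fact (3): the dimension function is multiplicative on $\Gamma$, and it coincides with the norm on the positive cone. The triangle inequality then gives
\[
\|xy\|\leq\sum_{i,j}|a_i||b_j|\,\|\delta_i\delta_j\|=\Bigl(\sum_i|a_i|i\Bigr)\Bigl(\sum_j|b_j|j\Bigr)=\|x\|\|y\|.
\]
The unit $\delta_1$ has norm $1$.

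Next, we extend the product. A bounded bilinear map on a normed space extends uniquely to a continuous bilinear map on its completion. Associativity, commutativity, distributivity and submultiplicativity then pass to $\hat\Gamma$ by density and continuity. So $\hat\Gamma$ is a commutative unital Banach algebra; we take real scalars, or complexify if needed.

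Finally, the involution. Duality on each $\Gamma_e$ fixes every $\delta_i$, since $V_i^*\cong V_i$ for cyclic groups; this is fact (4). So the identity map serves as the involution. It is trivially conjugate-linear on the real algebra, or we extend it conjugate-linearly after complexification. It is an involution and an isometry, and $(xy)^*=y^*x^*$ holds because of commutativity.

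The only substantive step is the computation $\|\delta_i\delta_j\|=ij$, which rests on nonnegativity of the coefficients $B(i,j,r)$ together with the dimension count. I do not expect any real obstacle here.
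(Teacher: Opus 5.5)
Your proof is correct and follows the paper's argument. You establish submultiplicativity $\|xy\|\leq\|x\|\|y\|$ of the dimension norm, extend the product to the completion by continuity, and take the identity as the involution using self-duality $\delta_a^*=\delta_a$; where the paper checks submultiplicativity inside each $\Gamma_e$, you derive it directly from the nonnegativity of $B(i,j,r)$ and $\dim_k(V_i\otimes_k V_j)=ij$, and you rightly note that the completion must be taken of $\mathbb{R}\otimes\Gamma$ (or its complexification, with the identity on the basis extended conjugate-linearly), a precision the paper leaves implicit.
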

\begin{proof}
To prove $\hat{\Gamma}$ has a product structure, it suffices to prove $\|xy\|\leq \|x\|\|y\|$ for $x,y \in \Gamma$, which can be verified on each $\Gamma_e$. We also observe that for $a \leq p^e$, $\delta_a^*=\delta_a$ is independent of choice of $e$, so the claim for $*$ also holds.   
\end{proof}

\section{The real algebra $\mathcal{F}$}
In this section, we introduce a real algebra $\mathcal{F}$ of real-valued functions whose multiplication is given by integration along a kernel function, and embed $\Gamma_e$ into $\mathcal{F}$. The nature of $\Gamma$ and $\mathcal{F}$ are different: the embeddings of different $\Gamma_e$'s into $\Gamma$ are compatible with inflations of representations while embeddings into $\mathcal{F}$ are compatible with induction of representations composed with multiplication by $1/p$. We also show that $\mathcal{F}$ is a normed algebra.

\subsection{The definitions of $\mathcal{F}$ and $*$-product}
\begin{definition}\label[definition]{3.1}
We define $\mathcal{F}^+$ to be the space of the following continuous functions $f$:
\begin{enumerate}
\item $f((-\infty,0])=0$.
\item $f|_{[1,\infty)}$ is a constant.
\item $f|_{[0,\infty)}$ is concave and increasing.
\item $f'_{\pm}$ is bounded, or equivalently, $f'_+(0)<\infty$.
\end{enumerate}
Let $\mathcal{F}$ be the real vector space spanned by $\mathcal{F}^+$ in the space of all functions $\mathbb{R} \to \mathbb{R}$. Since $\mathcal{F}^+$ is a cone in the space of real functions, we have $\mathcal{F}=\mathcal{F}^+-\mathcal{F}^+$.
\end{definition}
\begin{proposition}
$\mathcal{F}$ is exactly the space of Lipschitz continuous functions $f$ such that $f((-\infty,0])=0$, $f|_{[1,\infty)}$ is a constant and $f'$ is of bounded variation on its domain.    
\end{proposition}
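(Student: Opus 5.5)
We prove the two inclusions separately, working throughout with one-sided derivatives on $[0,1]$. The key tool is the classical fact that a function is a difference of two concave functions exactly when its one-sided derivative has bounded variation. Let $\mathcal{L}$ denote the set of Lipschitz functions $f$ with $f((-\infty,0])=0$, $f|_{[1,\infty)}$ constant and $f'$ of bounded variation.

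First, $\mathcal{F}\subset\mathcal{L}$. Since $\mathcal{L}$ is a real vector space, it suffices to show $\mathcal{F}^+\subset\mathcal{L}$. Take $f\in\mathcal{F}^+$.
\begin{enumerate}
\item On $[0,\infty)$, $f$ is concave and increasing, so $f'_+$ is nonincreasing and nonnegative.
\item Its values lie between $0$ and $f'_+(0)<\infty$, so $f$ is Lipschitz on $[0,\infty)$. On $(-\infty,0]$ it is identically zero, and since $f$ is continuous it is Lipschitz on all of $\mathbb{R}$.
\item The derivative $f'$ (say $f'_+$) is $0$ on $(-\infty,0)$, monotone on $[0,1)$ and $0$ on $[1,\infty)$.
\item Its total variation is therefore at most $2f'_+(0)$, plus the jumps at $0$ and $1$, which are also bounded by $f'_+(0)$.
\end{enumerate}
The conditions on $(-\infty,0]$ and $[1,\infty)$ are stable under differences, so $\mathcal{F}\subset\mathcal{L}$.

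Conversely, let $f\in\mathcal{L}$ and set $g=f'_+$ on $[0,1)$. Then $g$ has bounded variation and is bounded.
\begin{enumerate}
\item Write $g=c-(h_1-h_2)$ with $h_1,h_2$ nondecreasing, $h_1(0)=h_2(0)=0$ and $c=g(0)$. For instance, take $h_1$ to be the positive variation of $-g$ on $[0,t]$ and $h_2$ the negative variation.
\item Choose a constant $K$ with $K\ge \sup_{[0,1)}h_1$ and $K\ge \sup_{[0,1)}(h_2-c)$.
\item Define $u_1=K-h_1+c$ and $u_2=K-h_2$ on $[0,1)$, and $u_1=u_2=0$ on $[1,\infty)$. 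Both are nonincreasing and nonnegative, and $u_1-u_2=g$ on $[0,1)$.
\item Set $f_i(t)=\int_0^{t}u_i$ for $t\ge 0$ and $f_i=0$ for $t\le 0$. Each $f_i$ is continuous, vanishes on $(-\infty,0]$, is concave and increasing on $[0,\infty)$ (as an integral of a nonincreasing, nonnegative function), is constant on $[1,\infty)$, and has $f'_{i,+}(0)=u_i(0)<\infty$. Hence $f_i\in\mathcal{F}^+$.
\item Since $f$ is Lipschitz, it is absolutely continuous, so $f(t)=\int_0^t g$ on $[0,1]$. Thus $f=f_1-f_2$ on $[0,1]$, on $(-\infty,0]$ (both sides are $0$), and on $[1,\infty)$ (both sides are constant and agree at $1$ by continuity).
\end{enumerate}
This gives $f\in\mathcal{F}$.

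\textbf{Main obstacle.} The delicate point is bookkeeping at the endpoints $0$ and $1$. The decreasing pieces must remain nonnegative on $[0,1)$ so that each $f_i$ is increasing, and must drop to $0$ at $1$ so that each $f_i$ is constant on $[1,\infty)$. Choosing $K$ large as above handles both requirements. Note also that the claim $f=\int_0^t f'$ relies on Lipschitz continuity giving absolute continuity; bounded variation of $f'$ alone would not suffice.
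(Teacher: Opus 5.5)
Your proof is essentially the paper's: Jordan-decompose the derivative on $[0,1]$, shift both monotone pieces by a common constant so they have a fixed sign, set them to $0$ outside $[0,1]$, and integrate. The paper shifts increasing pieces to be negative and then negates the resulting convex decreasing primitives, which is the same construction with signs flipped. You also spell out the inclusion $\mathcal{F}\subset\mathcal{L}$, which the paper treats as evident.

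One slip needs fixing: your two conditions on $K$ are mismatched. The conditions $K\ge\sup h_1$ and $K\ge\sup(h_2-c)$ only give $u_1\ge c$ and $u_2\ge -c$. For instance, $c=-1$, $h_2\equiv 0$, $h_1$ rising to $1$ and $K=1$ satisfy both, yet $u_1=-h_1<0$. You need $K\ge\sup(h_1-c)$ and $K\ge\sup h_2$ instead; any sufficiently large $K$ satisfies these, and then the argument goes through.
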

\begin{proof}
Any function $f \in \mathcal{F}$ satisfies the above conditions. Conversely if $f$ satisfies the above conditions, we will prove $f \in \mathcal{F}=\mathcal{F}^+-\mathcal{F}^+$. Let $f'$ be its pointwise derivative. Since $f$ is Lipschitz, so it is of bounded variation, $f'$ exists almost everywhere and the fundamental theorem of Calculus $f(x)=f(x)-f(0)=\int^x_0f'(t)dt$ holds. Since $f'$ is of bounded variation, we can write $f'=f'_+-f'_-$ where $f'_\pm$ are increasing. Then $f'_\pm$ are bounded, so we can replace $f'_\pm$ by $f'_\pm-C$ to assume they all have negative values on $[0,1]$. We may also assume they have $0$ values outside $[0,1]$. Let $f_\pm=\int_0^x f'_\pm(t)dt$, we see $f=f_+-f_-=-f_--(-f_+)$ and $f_+,f_-$ are bounded, convex, decreasing, locally constant outside $[0,1]$. So $-f_\pm \in \mathcal{F^+}$ and $f \in \mathcal{F}^+-\mathcal{F}^+$.  
\end{proof}
\begin{proposition}\label[proposition]{3.3}
Let $K$ be a continuous function from an open set containing $[0,1]^2 $ to $\mathbb{R}$ such that $K(0,\cdot)=K(\cdot,0)=0$. Then for any $\alpha_1,\alpha_2 \in \mathcal{F}^+$, the Riemann-Stieltjes integral
$$\int_{[0,1^+]^2}K(t_1,t_2)d\alpha'_1(t_1)d\alpha'_2(t_2)$$
is well-defined. Here the integral of the continuous function $K$ is taken with respect to any monotone extension of $\alpha'_1,\alpha'_2$, and the value does not depend on the choice of the extension.
\end{proposition}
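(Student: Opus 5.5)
The plan is to reduce the double integral to the classical Riemann--Stieltjes theory for a continuous integrand against a product of finite measures, and then show that the endpoint behaviour forced by $K(0,\cdot)=K(\cdot,0)=0$ removes any dependence on how $\alpha'_i$ is extended.

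First, the integrators. For $\alpha\in\mathcal{F}^+$, the function $\alpha$ is concave on $[0,\infty)$, so its right derivative $\alpha'_+$ is non-increasing and right-continuous on $[0,\infty)$. It is bounded, since $\alpha'_+(0)<\infty$, and it is identically $0$ on $[1,\infty)$ because $\alpha$ is constant there. The left derivative $\alpha'_-$ agrees with $\alpha'_+$ except at countably many points. Any monotone function $\beta$ on an interval containing $[0,1]$ that agrees with $\alpha'$ at its points of differentiability differs from $\alpha'_+$ only at jump points, and on $(0,1]$ it lies between $\alpha'_+$ and $\alpha'_-$. The integral over $[0,1^+]$ is to be read as the Riemann--Stieltjes integral over a slightly larger interval $[0,1+\epsilon]$, or over $[-\epsilon,1+\epsilon]$, on which $K$ is defined. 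The notation $1^+$ records that the jump of $\alpha'$ at $1$ (from $\alpha'_-(1)$ to $0$) is included.

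Second, existence. For a monotone (hence bounded-variation) integrator $\beta$ and a continuous integrand, the one-dimensional Riemann--Stieltjes integral exists and equals the Lebesgue--Stieltjes integral against the signed measure $d\beta$. In two variables, the product $d\beta_1\,d\beta_2$ is a finite signed product measure on $[-\epsilon,1+\epsilon]^2$. Continuity of $K$ makes it uniformly continuous on this compact square. So for any partition of mesh $<\delta$ and any choice of tags, the Riemann--Stieltjes sums differ from the Lebesgue integral by at most $\omega_K(\delta)\,\mathrm{TV}(\beta_1)\,\mathrm{TV}(\beta_2)$, where $\omega_K$ is the modulus of continuity of $K$. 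This gives existence and identifies the integral with $\int K\,d(\mu_1\otimes\mu_2)$, where $\mu_i=d\beta_i$. Equivalently, one can iterate: $t_1\mapsto\int K(t_1,t_2)\,d\beta_2(t_2)$ is continuous by uniform continuity, and then one integrates against $\beta_1$.

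Third, independence of the extension, which I expect to be the main obstacle. Here it matters exactly what "monotone extension" means at the endpoints. Two monotone extensions agreeing with $\alpha'$ on $(0,1)$ off a countable set induce the same measure on the open interval $(0,1)$. This holds because a measure is determined by the distribution function at continuity points, and a monotone function is right-continuous off a countable set. The only freedom is therefore in the atoms at $0$ and at $1$, together with anything outside $[0,1]$.

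Outside $(0,1]$, the extension must be constant on each side wherever $\alpha'$ is constant: it equals $0$ on $(1,1+\epsilon]$, and any value $\ge\alpha'_+(0)$ on $[-\epsilon,0)$. So the only possible discrepancy is the mass of an atom at $t_i=0$. That atom contributes terms of the form $c\int K(0,t_2)\,d\mu_2$ or $c\int K(t_1,0)\,d\mu_1$, and these vanish by the hypothesis $K(0,\cdot)=K(\cdot,0)=0$.

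At $1$, every admissible extension has the same jump $\alpha'_-(1)-0$, since $\alpha'=0$ to the right of $1$ and $\alpha'\to\alpha'_-(1)$ from the left. Hence $\mu_i$ restricted to $(0,1+\epsilon]$ is canonical. This completes the proof of independence.

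The care needed in this step is to justify that Riemann--Stieltjes sums with arbitrary tags still converge when the integrators have atoms. This works precisely because the integrand $K$ is continuous, even though $\beta_i$ may jump.
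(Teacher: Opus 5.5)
Your argument is correct. It differs from the paper only in that the paper gives no argument of its own: it defers entirely to \cite[Proposition 2.22 and Remark 2.23]{arxivAnalysis}. You instead give a self-contained proof, which proceeds as follows.
\begin{enumerate}
\item You identify the two-dimensional Riemann--Stieltjes integral with integration of $K$ against the product of the Lebesgue--Stieltjes measures $d\beta_1\otimes d\beta_2$. Your mesh estimate by $\omega_K(\delta)$ times the total masses is valid for monotone integrators, once each atom at a partition point is split between the two adjacent cells.
\item You observe that the induced measure on $(0,1+\epsilon]$ is canonical. This includes the jump $-\alpha'_-(1)$ at $1$ that the notation $1^+$ records.
\item Hence all dependence on the extension sits in the endpoint atom $\beta_i(0^+)-\beta_i(0)$, and this is killed by $K(0,\cdot)=K(\cdot,0)=0$.
\end{enumerate}
What your version buys is transparency. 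It shows exactly where the vanishing of $K$ on the axes is used, and why the right endpoint needs no hypothesis.

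One correction: drop the alternative reading of the integral over $[-\epsilon,1+\epsilon]$. On $[-\epsilon,0)$ a monotone extension is not tied to $\alpha'$. Indeed $\alpha'$ is $0$ there, which is incompatible with monotonicity as soon as $\alpha'_+(0)>0$, so the extension need not be constant on $[-\epsilon,0)$. It can then carry mass on $[-\epsilon,0)$, where $K$ need not vanish, and the value of the integral would depend on the extension. Your sentence asserting constancy on $[-\epsilon,0)$ is therefore unjustified.

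It is also unnecessary. With the integral taken over $[0,1+\epsilon]$, as the notation $[0,1^+]$ indicates, the only free datum at the left end is the value $\beta_i(0)\ge\alpha'_{i,+}(0)$. That is exactly what your atom-at-$0$ argument handles.
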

\begin{proof}
This is proved by \cite[Proposition 2.22 and Remark 2.23]{arxivAnalysis}.    
\end{proof}
We reintroduce our function $D_\phi$ from another point of view. It is a special case of the multivariate $h$-function introduced in \cite[Definition 3.1]{arxivAnalysis}
\begin{definition}\label[definition]{3.4}
Let $\phi \in k[T_1,T_2]$ be a polynomial without constant term. The kernel function of $\phi$, denoted by $D_\phi$, is the following function $\mathbb{R}^3 \to \mathbb{R}$:
$$(t_1,t_2,t_3) \to \lim_{e \to \infty}\frac{\ell(k[T_1,T_2]/(T_1^{\lceil t_1p^e \rceil},T_2^{\lceil t_2p^e \rceil},\phi^{\lceil t_3p^e\rceil}))}{p^{2e}}.$$
Here the existence is proved in \cite[Proposition 3.5]{arxivAnalysis}. When $t_i \leq 0$ for some $i$, we denote $(T_1^{\lceil t_1p^e \rceil},T_2^{\lceil t_2p^e \rceil},\phi^{\lceil t_3p^e \rceil})=k[T_1,T_2]$ by convention, so in this case $D_\phi(t_1,t_2,t_3)=0$. We omit $\phi$ when $\phi=T_1+T_2$.
\end{definition}
\begin{remark}
When $t_1,t_2,t_3\in\mathbb{Z}_{>1}$, we have
$$D_\phi(t_1,t_2,t_3)=\ell(k[T_1,T_2]/(T_1^{t_1},T_2^{t_2},\phi^{t_3})),$$
so it coincides with the definition in subsection 2.2.
\end{remark}
\begin{remark}\label[remark]{3.6}
Properties of $D=D_{T_1+T_2}$ are explored in \cite[Section 3 and Section 4]{arxivAnalysis}. We list some of them here:
\begin{enumerate}
\item \cite[Proposition 3.3]{arxivAnalysis} $D$ is a continuous function on $\mathbb{R}^3$. It is concave in each variable on $[0,\infty)$.
\item \cite[Proof of Lemma 4.45]{arxivAnalysis} $\frac{\partial}{\partial t^+}D(t_1,t_2,t)|_{t=0}=\min\{t_1,t_2\}$.
\item \cite[Proposition 4.3(1)]{arxivAnalysis} $D(t_1,t_2,t_3)$ is invariant under any permutation of $t_1,t_2,t_3$.
\item \cite[Proposition 4.3(3)]{arxivAnalysis} $D(t_1,t_2,t)=0$ if $t \leq 0$ and $D(t_1,t_2,1)=t_1t_2$ if $t_1,t_2 \in [0,1]$.
\item \cite[Proposition 4.3(4)]{arxivAnalysis} $D(p^ea,p^eb,p^ec)=p^{2e}D(a,b,c)$ for any $e\in\mathbb{N}$, $a,b,c \in \mathbb{R}$.
\end{enumerate}
\end{remark}
\begin{definition}
For $f_1,f_2 \in \mathcal{F}$, define their product to be the real function $[0,1] \to \mathbb{R}$:
$$f(t)=f_1*f_2(t)=\int_{[0,1^+]^2}D(t_1,t_2,t)d(-f'_1(t_1))d(-f'_2(t_2)).$$
It is well-defined as a function by \Cref{3.3} and bilinearity.
\end{definition}
\subsection{The associativity of $*$-product}
In this subsection, we will prove that $*$ is a binary operation on $\mathcal{F}$ which is associative. Therefore, $\mathcal{F}$ has a real algebra structure.
\begin{lemma}\label[lemma]{3.8}
Suppose $f,g \in \mathcal{F}^+$, then $f*g \in \mathcal{F}^+$. Therefore, $*$ is a map $\mathcal{F}\times\mathcal{F}\to \mathcal{F}$. Also for $f,g \in \mathcal{F}$, $f*g=g*f$. 
\end{lemma}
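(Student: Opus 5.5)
Commutativity is immediate from the definition. By \Cref{3.6}(3), $D(t_1,t_2,t)=D(t_2,t_1,t)$. By Fubini for the product Riemann--Stieltjes measure $d(-f_1')\,d(-f_2')$, interchanging the roles of $t_1$ and $t_2$ then gives $f*g=g*f$. For general $f,g\in\mathcal{F}$ this follows from the case $f,g\in\mathcal{F}^+$ by bilinearity. Likewise, once we know $f*g\in\mathcal{F}^+$ for $f,g\in\mathcal{F}^+$, writing $f=f_1-f_2$ and $g=g_1-g_2$ with $f_i,g_i\in\mathcal{F}^+$ and expanding bilinearly shows $f*g\in\mathcal{F}^+-\mathcal{F}^+=\mathcal{F}$. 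So the real content is closure of the cone $\mathcal{F}^+$.

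Fix $f,g\in\mathcal{F}^+$. The measures $\mu_f=d(-f')$ and $\mu_g=d(-g')$ are nonnegative on $[0,1^+]$, because $f'$ is decreasing; they have finite total mass $f'_+(0)$ and $g'_+(0)$ respectively, the jump to $0$ at $1^+$ included. We then verify the four conditions of \Cref{3.1} for $h(t)=\int D(t_1,t_2,t)\,d\mu_f\,d\mu_g$, extended by the same formula to all $t\in\mathbb{R}$:
\begin{enumerate}
\item For $t\le 0$, $D(t_1,t_2,t)=0$ by \Cref{3.6}(4), so $h=0$ there.
\item For $t\ge 1$ and $t_1,t_2\in[0,1]$, the value $D(t_1,t_2,t)$ equals $D(t_1,t_2,1)=t_1t_2$. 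One way to see this: $D$ is increasing in $t$ and bounded by $\min(t_1t_2,\dots)$. More concretely, $T_1^{\lceil t_1p^e\rceil}$ and $T_2^{\lceil t_2p^e\rceil}$ already kill $(T_1+T_2)^{\lceil t_1p^e\rceil+\lceil t_2p^e\rceil}$, and when $t_1,t_2\le 1$ the $p^e$-th power $(T_1+T_2)^{p^e}=T_1^{p^e}+T_2^{p^e}$ already lies in the ideal. So $h$ is constant on $[1,\infty)$.
\item For $t\ge0$, each integrand $t\mapsto D(t_1,t_2,t)$ is concave and increasing by \Cref{3.6}(1). Length functions increase in the exponent, and the limit preserves this. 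Nonnegative mixtures of concave increasing functions are concave increasing, so $h$ is too.
\item Continuity of $h$ follows from continuity of $D$ together with dominated convergence, since $D$ is bounded on compacts and the measures are finite.
\end{enumerate}

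It remains to show that $h'_+(0)<\infty$, and this is the main obstacle: exchanging the derivative with the integral. By concavity of each integrand in $t$, the difference quotients $D(t_1,t_2,t)/t$ increase as $t\downarrow 0$ to $\partial_{t^+}D(t_1,t_2,0)=\min\{t_1,t_2\}$, using \Cref{3.6}(2). Monotone convergence then gives
\[
h'_+(0)=\int\min\{t_1,t_2\}\,d\mu_f\,d\mu_g\le f'_+(0)\,g'_+(0)<\infty .
\]
One technical point needs care: the Riemann--Stieltjes integral of \Cref{3.3} must be identified with a Lebesgue--Stieltjes integral against genuine positive measures, including the atom at $1^+$. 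I would handle this by citing \Cref{3.3}, noting that the integrand is continuous, and extending $f'$ monotonically so that the measures are honest Borel measures on a compact interval. With that identification, the monotone and dominated convergence arguments above apply, and this completes the proof that $f*g\in\mathcal{F}^+$.
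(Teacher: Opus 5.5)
Your proposal is correct and follows the paper's proof essentially step for step. Like the paper, you check the conditions of \Cref{3.1} for $f*g$ using the same properties of $D$ from \Cref{3.6}: vanishing for $t\le 0$, $D(t_1,t_2,t)=t_1t_2$ for $t\ge 1$, concavity and monotonicity in $t$, and $\partial_{t^+}D(t_1,t_2,0)=\min\{t_1,t_2\}$. You then get $*:\mathcal{F}\times\mathcal{F}\to\mathcal{F}$ by bilinearity and commutativity from the symmetry of $D$, exactly as the paper does. Your extras only make rigorous steps the paper performs formally: the Frobenius argument for $D(t_1,t_2,t)=t_1t_2$ when $t\ge 1$, the continuity check, and the monotone-convergence justification for differentiating under the integral, which gives the bound $f'_+(0)g'_+(0)$ in place of the paper's $f(1)g'_+(0)$.
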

\begin{proof}
We check the conditions in \Cref{3.1}.

(1) If $t \leq 0$, then $D(t_1,t_2,t)=0$, so its integral is $0$.

(2) If $t \geq 1$,
\begin{align*}
f*g(t)=\int_{[0,1^+]^2}D(t_1,t_2,1)d(-f_1'(t_1))d(-f_2'(t_2))\\
=\int_{[0,1^+]^2}t_1t_2d(-f_1'(t_1))d(-f_2'(t_2))\\
=\int_{[0,1^+]^2}f_1'(t_1)f_2'(t_2)dt_1dt_2=f_1(1)f_2(1),
\end{align*}
which is a constant.

(3)We see
$$\frac{d}{dt^+}(f*g(0))=\int_{[0,1^+]}\frac{\partial}{\partial t^+}D(t_1,t_2,0)d(-f'(t_1))d(-g'(t_2)).$$
But $\frac{\partial}{\partial t^+}D(t_1,t_2,0)=\min\{t_1,t_2\}$ by \Cref{3.6} and $-f'(t_1),-g'(t_2)$ are increasing, so it suffices to prove
$$\int_{[0^+,1^+]^2}t_1d(-f'(t_1))d(-g'(t_2))=f(1)(-g'_{+}(0))<\infty.$$

(4)For $f,g \in \mathcal{F}^+$, $-f'(t_1),-g'(t_2)$ are increasing, so the Riemann-Stieltjes integral of positive functions with respect to $-f'\cdot-g'$ is still nonnegative. From the fact that $t \to D(t_1,t_2,t)$ is increasing and concave, so is its integral, which is $f*g$.

So $f*g \in \mathcal{F}^+$, that is, $*$ maps $\mathcal{F}^+\times \mathcal{F}^+$ to $\mathcal{F}^+$. By bilinearity, $*$ maps $\mathcal{F}\times \mathcal{F}$ to $\mathcal{F}$. The commutativity of $*$ comes from symmetry of $D$ with respect to $t_1,t_2$.
\end{proof}
\begin{lemma}\label[lemma]{3.9}
Let $f_e,g_e$ be a sequence of elements in $\mathcal{F}^+$ with uniformly bounded derivatives. Then the sequence $f_e*g_e$ has uniformly bounded derivatives.    
\end{lemma}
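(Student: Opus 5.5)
Since $f_e*g_e\in\mathcal{F}^+$ by \Cref{3.8}, it is concave and increasing on $[0,\infty)$. Its two-sided derivatives are therefore bounded by the right derivative at $0$ (and they vanish outside $[0,1]$). So the statement reduces to a uniform bound on $\frac{d}{dt^+}(f_e*g_e)(0)$. Assume $|f_e'|,|g_e'|\le C$ for all $e$; by monotonicity of the derivatives this amounts to $0\le f_e'\le f'_{e,+}(0)\le C$, and likewise for $g_e$.

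We follow step (3) of the proof of \Cref{3.8}. The right derivative can be passed under the Riemann--Stieltjes integral: $t\mapsto D(t_1,t_2,t)$ is concave with difference quotients increasing to the derivative as $t\to0^+$, and the measures $d(-f_e')$, $d(-g_e')$ are positive, so monotone convergence applies. By \Cref{3.6}(2) this gives
$$\frac{d}{dt^+}(f_e*g_e)(0)=\int_{[0,1^+]^2}\min\{t_1,t_2\}\,d(-f_e'(t_1))\,d(-g_e'(t_2)).$$
Bounding $\min\{t_1,t_2\}\le t_1$ and integrating in $t_2$ first, the total mass of $d(-g_e')$ on $[0,1^+]$ is $g'_{e,+}(0)-0\le C$. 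Integrating by parts in $t_1$ gives $\int_{[0,1^+]}t_1\,d(-f_e'(t_1))=\int_0^1 f_e'(t_1)\,dt_1=f_e(1)\le C$. Hence the right derivative at $0$ is at most $C^2$, uniformly in $e$.

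The main care point is the endpoints. The mass of $d(-g_e')$ at $0$ (which comes from the jump $g'_{e,+}(0)$ versus $0$ on the left) and at $1^+$ (where the derivative drops to $0$) must be counted correctly. Both masses are included in the convention $[0,1^+]$, so the total mass is exactly $g'_{e,+}(0)$. The boundary term $[-t_1f_e'(t_1)]$ also vanishes at both ends, since $f_e'=0$ past $1$. Combining the three steps, all derivatives of $f_e*g_e$ are bounded by $C^2$.
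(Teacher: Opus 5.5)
Your proof is correct and is essentially the paper's argument: reduce by concavity to the right derivative at $0$, pass it under the integral via \Cref{3.6}(2), and bound $\min\{t_1,t_2\}\le t_1$ to get $f_e(1)\,g'_{e,+}(0)\le C^2$. The paper's ``$f(1)(-g'_+(0))$'' is a sign slip that you avoid, and your monotone-convergence justification of the interchange is a welcome addition.

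One correction to your last paragraph. Relative to the value $0$ on the left, $-g_e'$ jumps \emph{down} by $g'_{e,+}(0)$ at $0$, so that atom is negative and must be excluded; use a monotone extension as in \Cref{3.3}, or integrate over $(0,1^+]$ as the paper does with $[0^+,1^+]$. This is harmless for the original integrand, since $\min\{t_1,t_2\}$ vanishes at $t_2=0$, but it is needed before you replace $\min\{t_1,t_2\}$ by $t_1$. Including the atom would make the total mass $0$ rather than $g'_{e,+}(0)$.
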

\begin{proof}
By \Cref{3.8}, $f_e*g_e \in \mathcal{F}^+$. So it suffices to prove that the right derivative of $f_e*g_e$ at $0$ is uniformly bounded. Using the same proof of \Cref{3.8} (3), we see
$$\frac{d}{dt^+}(f_e*g_e(0))\leq f_e(1)(-g'_{e,+}(0)).$$
But we have $f_e(1)\leq f'_{e,+}(0)$, so it is uniformly bounded in terms of $e$.
\end{proof}
\begin{lemma}\label[lemma]{3.10}
Let $f_e,g_e$ be a sequence of elements in $\mathcal{F}^+$ with uniformly bounded derivatives. Assume $f_e \to f \in \mathcal{F}$ and $g_e \to g \in \mathcal{F}$ pointwisely. Then $f_e*g_e \to f*g$ pointwisely.     
\end{lemma}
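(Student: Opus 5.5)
The plan is to read each factor as a finite positive measure and use weak convergence. For $f\in\mathcal{F}^+$, write $\mu_f$ for the Stieltjes measure $d(-f')$ on $[0,1^+]$: it has distribution function $F=-f'_+$ on $[0,1)$ and jumps to $0$ just past $1$. Its total mass is at most $2f'_+(0)$, so under the hypothesis the masses of $\mu_{f_e}$ and $\mu_{g_e}$ are uniformly bounded. By bilinearity (write $f=f^+-f^-$, and use that $f_e\in\mathcal F^+$ converge to $f$, which is then concave, increasing and Lipschitz with the same bound), we may assume $f,g\in\mathcal{F}^+$. Fix $t$ and set $K(t_1,t_2)=D(t_1,t_2,t)$. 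This is continuous and vanishes when $t_1=0$ or $t_2=0$, as in \Cref{3.3}.

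\textbf{Step 1 (convergence of derivatives).} Concave functions converging pointwise on $[0,\infty)$ have one-sided derivatives converging at every point where the limit is differentiable. Hence $f'_e(s)\to f'(s)$ for all $s\in(0,1)$ outside a countable set. The endpoint $s=1^+$ is automatic, since all derivatives are $0$ there. This gives weak convergence $\mu_{f_e}\to\mu_f$ on every interval $[\delta,1^+]$ whose left endpoint $\delta$ is a continuity point of $f'$. We get no control at $0$, because $f'_{e,+}(0)$ need not converge; that is exactly why the vanishing of $K$ on the axes matters.

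\textbf{Step 2 (uniform smallness near the axes).} From \Cref{3.6} (concavity in $t$ and $\partial_{t^+}D|_{t=0}=\min\{t_1,t_2\}$), together with $D(t_1,t_2,0)=0$, we get
$$0\le K(t_1,t_2)\le t\min\{t_1,t_2\}.$$
So the contribution of the region $\{t_1<\delta\}$ is bounded by
$$t\cdot\mu_{g_e}([0,1^+])\int_{(0,\delta)}t_1\,d\mu_{f_e}(t_1).$$
Integrating by parts, this last integral equals $f_e(\delta)-\delta f'_e(\delta)\le f_e(\delta)\le L\delta$, where $L$ is the uniform derivative bound. The same bound holds for $\{t_2<\delta\}$ and for the limit measures. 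Hence the part of the integral near the axes is $O(\delta)$, uniformly in $e$.

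\textbf{Step 3 (conclude).} Choose $\delta$ to be a continuity point of both $f'$ and $g'$. On $[\delta,1^+]^2$ the product measures $\mu_{f_e}\otimes\mu_{g_e}$ converge weakly to $\mu_f\otimes\mu_g$, because they are products of weakly convergent finite measures with bounded mass. Since $K$ is continuous, the integrals over this square converge. Letting $\delta\to0$ and using Step 2 gives $f_e*g_e(t)\to f*g(t)$.

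The main obstacle is Step 2 together with the bookkeeping at the endpoints $0$ and $1^+$ in the Stieltjes integrals: the possible atom at $1$ and the non-convergent mass near $0$. The key inequality to secure is the bound $K\le t\min\{t_1,t_2\}$, which makes the mass near $0$ harmless.
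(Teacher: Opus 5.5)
Your proposal is correct and follows essentially the paper's route. The paper also uses concavity (via \cite[Lemma 4.43]{arxivAnalysis}) to get $f'_e\to f'$ and $g'_e\to g'$ outside a countable set and the point $0$, and then cites a convergence theorem for Riemann--Stieltjes integrals (\cite[Theorem 2.28]{arxivAnalysis}); your Steps 2--3 prove that convergence directly, using weak convergence of the product measures plus a uniform estimate near the axes, where $D$ vanishes. Your bound $K\le t\min\{t_1,t_2\}$ is valid but stronger than needed: uniform continuity of $K$ on $[0,1]^2$, $K=0$ on the axes, and the uniform bound on the masses of $d(-f'_e)$ and $d(-g'_e)$ already make the near-axis contribution uniformly small.
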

\begin{proof}
The proof is the same as \cite[Lemma 5.17]{arxivAnalysis}. We use concavity of $f,g,f_e,g_e$ and \cite[Lemma 4.43]{arxivAnalysis} to deduce that $f'_e \to f'$ and $g'_e \to g'$ outside countably many points where $f'$ or $g'$ does not exist or $x=0$. Then apply \cite[Theorem 2.28]{arxivAnalysis} to get the result.   
\end{proof}

\begin{definition}
For $e \geq 1$ and $1 \leq a \leq p^e$, define
$$\alpha_{a,e}=\max\{\min\{t,a/p^e\},0\} \in \mathcal{F}^+.$$
Let $\mathcal{F}_e$ be the $\mathbb{R}$-linear subspace of $\mathcal{F}$ spanned by $\alpha_{a,e}$, $1 \leq a \leq p^e$. This is exactly the space of functions that are piecewise constant on $[(a-1)/p^e,a/p^e]$ for all $1 \leq a \leq p^e$, equal to $0$ on $(-\infty,0]$ and constant on $[1,\infty)$.
\end{definition}
\begin{lemma}
For $f \in \mathcal{F}$, there exists a sequence of functions $f_e \in \mathcal{F}_e$ such that $f_e \xrightarrow[]{e \to \infty} f$ pointwisely, and $f'_{e,\pm}$ is uniformly bounded on $[0,1]$.
\end{lemma}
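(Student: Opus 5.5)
Since $\mathcal{F}=\mathcal{F}^+-\mathcal{F}^+$ and the conclusion is stable under taking differences (pointwise limits and uniform bounds on one-sided derivatives both pass to differences), I will assume $f\in\mathcal{F}^+$. Note also that $\mathcal{F}_e$ as described is the space of functions that are \emph{piecewise linear} on the intervals $[(a-1)/p^e,a/p^e]$, vanish on $(-\infty,0]$ and are constant on $[1,\infty)$. The natural candidate is therefore the piecewise linear interpolant: let $f_e$ be the function that agrees with $f$ at the grid points $a/p^e$, $0\le a\le p^e$, is linear on each $[(a-1)/p^e,a/p^e]$, is $0$ on $(-\infty,0]$, and equals $f(1)$ on $[1,\infty)$.

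First I check that $f_e\in\mathcal{F}_e$. Write $f_e=\sum_{a=1}^{p^e}c_a\alpha_{a,e}$. On $[(a-1)/p^e,a/p^e]$ the slope of $\alpha_{b,e}$ is $1$ if $b\ge a$ and $0$ otherwise. Hence the slope of $f_e$ there is $s_a=\sum_{b\ge a}c_b$. This triangular system is solved by $c_b=s_b-s_{b+1}$ with $s_{p^e+1}=0$, so every such piecewise linear function lies in $\mathcal{F}_e$. Moreover, concavity of $f$ makes the secant slopes $s_a=p^e\bigl(f(a/p^e)-f((a-1)/p^e)\bigr)$ nonincreasing in $a$, and monotonicity makes them nonnegative. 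So $c_a\ge 0$, and in fact $f_e\in\mathcal{F}^+$, which I note for later use with \Cref{3.10}.

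Second, the uniform derivative bound. Every one-sided derivative of $f_e$ is one of the secant slopes $s_a$ (or $0$ outside $[0,1]$). By concavity and monotonicity of $f$ on $[0,\infty)$,
\[
0\le s_a\le s_1=p^e f(1/p^e)\le f'_+(0),
\]
and $f'_+(0)<\infty$ by condition (4) of \Cref{3.1}. So $|f'_{e,\pm}|\le f'_+(0)$ uniformly in $e$.

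Third, pointwise convergence. Outside $[0,1]$ we have $f_e=f$. For $t\in[0,1]$, pick the grid interval containing $t$; both $f$ and $f_e$ lie between the values of $f$ at its two endpoints, which differ by at most $L/p^e$ with $L=f'_+(0)$ the Lipschitz constant of $f$. Hence $|f_e(t)-f(t)|\le L/p^e\to0$, so the convergence is even uniform. The one point needing care is that the index set $e\ge1$ uses the grid $p^{-e}\mathbb{Z}$, which is handled automatically by this construction. I expect no real obstacle; the only mildly delicate step is identifying $\mathcal{F}_e$ as the piecewise linear functions via the triangular change of basis, which I carried out in the first step.
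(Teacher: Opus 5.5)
Your proof is correct and follows the paper's own argument: reduce to $f\in\mathcal{F}^+$, then take the piecewise linear interpolant of $f$ on the grid $p^{-e}\mathbb{Z}$. The paper only asserts that this interpolant works, and you supply the missing details: membership in $\mathcal{F}_e$ via the triangular change of basis (where you rightly read the paper's ``piecewise constant'' as piecewise linear), the bound $|f'_{e,\pm}|\le f'_+(0)$, and uniform convergence at rate $f'_+(0)/p^e$; your observation that $f_e\in\mathcal{F}^+$ is exactly what the proof of \Cref{3.15} later uses.
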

\begin{proof}
Since $\mathcal{F}=\mathcal{F}^+-\mathcal{F}^+$, we may assume $f \in \mathcal{F}^+$. Let $f_e$ be the unique function which is piecewise linear on $[a/p^e,(a+1)/p^e]$ and coincides with $f$ on $1/p^e\mathbb{Z}$, then $f_e$ satisfies the desired condition.
\end{proof}
In the following part, $\delta_a,a \in \mathbb{N}$ refers to a class of $k$-objects while $\bm{\delta}_a,a \in\mathbb{R}$ refers to the Dirac delta function, which is a generalized function.
\begin{lemma}
Let $n \geq 1$ and $1 \leq a,b \leq p^e$. Then
$$\alpha_{a,e}*\alpha_{b,e}=\frac{1}{p^e}\sum_{1 \leq c \leq p^e} B(a,b,c)\alpha_{c,e}.$$
\end{lemma}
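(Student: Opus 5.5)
We compute both sides directly from the definition of the $*$-product, using that the derivatives of $\alpha_{a,e}$ are step functions. On $[0,1]$ the function $\alpha_{a,e}$ has derivative $1$ on $(0,a/p^e)$ and $0$ on $(a/p^e,1]$. Extending $-\alpha'_{a,e}$ monotonically, as permitted by \Cref{3.3}, the measure $d(-\alpha'_{a,e})$ is the Dirac mass $\bm{\delta}_{a/p^e}$ when $a<p^e$. When $a=p^e$ the jump sits at $1^+$, which is why the integral is taken over $[0,1^+]$. The contribution at $0$ is irrelevant because $D(0,\cdot,\cdot)=D(\cdot,0,\cdot)=0$. Hence
$$\alpha_{a,e}*\alpha_{b,e}(t)=D(a/p^e,b/p^e,t).$$
By the homogeneity in \Cref{3.6}(5),
$$D(a/p^e,b/p^e,t)=p^{-2e}D(a,b,p^et).$$
The left-hand side is therefore known; it remains to identify the right-hand side with it.

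For the right-hand side, evaluate $\sum_c B(a,b,c)\alpha_{c,e}(t)$ at $t=r/p^e$ with $r\in\mathbb{Z}$, $0\le r\le p^e$. Then $\alpha_{c,e}(r/p^e)=\min\{r,c\}/p^e$. By \Cref{5.2 e_M and l_M relation}(1), applied to the $k$-object $k[T_1,T_2]/(T_1^a,T_2^b)$ with respect to $T_1+T_2$, we get
$$\sum_c B(a,b,c)\min\{r,c\}=D(a,b,r).$$
Here we use that this $k$-object is annihilated by $(T_1+T_2)^{p^e}$, because $a,b\le p^e$ and we are in characteristic $p$, so only $c\le p^e$ occur. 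This is also the content of the identity $B_\phi(\mathbf{t},r)=2D_\phi-D_\phi(r+1)-D_\phi(r-1)$. The right-hand side at $r/p^e$ thus equals
$$\frac{1}{p^e}\cdot\frac{1}{p^e}D(a,b,r)=p^{-2e}D(a,b,r),$$
which matches the left-hand side, using that the integer-valued $D$ agrees with the limit-defined one by the remark after \Cref{3.4}.

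It remains to pass from the grid to all $t$. The right-hand side lies in $\mathcal{F}_e$, so it is linear on each $[(r-1)/p^e,r/p^e]$, constant on $[1,\infty)$, and zero on $(-\infty,0]$. The main obstacle is to show that $t\mapsto D(a,b,p^et)$ is also linear between consecutive integers when $a,b$ are integers. We would prove this from the definition of $D$ as a limit. Write $\ell(s)=\ell(k[T_1,T_2]/(T_1^{ap^{e'}},T_2^{bp^{e'}},(T_1+T_2)^{s}))$, where $e'$ is the auxiliary exponent in the limit. Since $T_1^{p^{e'}},T_2^{p^{e'}}$ commute with $(T_1+T_2)^{p^{e'}}=T_1^{p^{e'}}+T_2^{p^{e'}}$, the module $k[T_1,T_2]/(T_1^{ap^{e'}},T_2^{bp^{e'}})$ is free over $k[T_1^{p^{e'}},T_2^{p^{e'}}]/(T_1^{ap^{e'}},T_2^{bp^{e'}})$ with basis the monomials $T_1^iT_2^j$, $i,j<p^{e'}$, by the Frobenius/flatness argument. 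This splits the $k$-object, with respect to $T_1+T_2$, into a Frobenius-twisted copy of $V$ (the $k$-object for $(a,b)$) tensored with the $p^{e'}$-fold structure. From this, $\ell(rp^{e'}+s)$ with $0\le s\le p^{e'}$ interpolates linearly, up to $o(p^{2e'})$, between the values at $r p^{e'}$ and $(r+1)p^{e'}$. Alternatively, and probably more simply, we would invoke the multiplicativity/Frobenius compatibility from \cite{arxivAnalysis}, namely that $D(a,b,\cdot)$ for integers $a,b$ is piecewise linear with integer breakpoints, and cite it. Either way, two piecewise-linear functions with the same breakpoints that agree at all breakpoints are equal, and outside $[0,1]$ both sides are $0$ or constant by \Cref{3.8}, which completes the proof.
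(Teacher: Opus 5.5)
\textbf{Where the proof has a gap.} Most of your reduction is correct. The identity $\alpha_{a,e}*\alpha_{b,e}(t)=p^{-2e}D(a,b,p^et)$ is right. So is the agreement at the grid points $t=r/p^e$ via \Cref{5.2 e_M and l_M relation}(1), and so is the final gluing of piecewise linear functions. The gap is the step you yourself call the main obstacle: that $s\mapsto D(a,b,s)$ is linear on each $[r,r+1]$.

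Write $q=p^{e'}$ and let $W_q=k[T_1,T_2]/(T_1^{aq},T_2^{bq})$ be the $k$-object for $T=T_1+T_2$. By \Cref{5.2 e_M and l_M relation}(2), linearity amounts to the Jordan blocks of $T$ on $W_q$ having lengths divisible by $q$. Your argument does not establish this.

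\begin{itemize}
\item Freeness of $W_q$ over $A_q=k[T_1^q,T_2^q]/(T_1^{aq},T_2^{bq})$ is true. However, the monomial decomposition $W_q=A_q\otimes_k\operatorname{span}\{T_1^iT_2^j\}$ is not stable under $T$: multiplying $T_1^{q-1}$ by $T$ produces $T_1^q$, which carries into $A_q$. So it does not split $W_q$ as a $k$-object.
\item Since $T^q=T_1^q+T_2^q\in A_q$, freeness only gives the Jordan type of $T^q$. That does not determine the Jordan type of $T$: for $q=2$, $T$-blocks of sizes $4,2$ and of sizes $3,3$ both give $T^2$-blocks of sizes $2,2,1,1$.
\item Your hedge ``up to $o(p^{2e'})$'' reflects that the splitting was never actually derived.
\item The fallback citation of piecewise linearity from \cite{arxivAnalysis} is not supported by anything stated in the paper. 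That piecewise linearity is essentially the content of this lemma.
\end{itemize}

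\textbf{The missing ingredient.} What you need is the exact decomposition $W_q\cong\bigoplus_{c}V_{cq}^{\oplus qB(a,b,c)}$ of $k[T]$-modules. The paper obtains it from $V_{aq}=\operatorname{Ind}^G_HV_a$, with $H=\mathbb{Z}/p^e\mathbb{Z}\subset G=\mathbb{Z}/p^eq\mathbb{Z}$, together with the projection formula:
\[
\operatorname{Ind}V_a\otimes\operatorname{Ind}V_b\cong\operatorname{Ind}(V_a\otimes\operatorname{Res}\operatorname{Ind}V_b)\cong\operatorname{Ind}(V_a\otimes V_b)^{\oplus q}.
\]
You could instead stay with your Frobenius idea and change variables.
\begin{itemize}
\item Let $A=k[S_1,S_2]/(S_1^a,S_2^b)$ with $S_i\mapsto T_i^q$. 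Substitute $T_2=T-T_1$ and use $T_2^q=T^q-T_1^q$.
\item This gives $W_q\cong A[T_1,T]/(T_1^q-S_1,\,T^q-S_1-S_2)$, which is $q$ copies of $k[T]\otimes_{k[U]}A$. Here $U\mapsto T^q$, and $U$ acts on $A$ by $S_1+S_2$.
\item Since $k[T]$ is flat over $k[U]$, the decomposition $A\cong\bigoplus_c(k[U]/U^c)^{B(a,b,c)}$ becomes the one above.
\end{itemize}

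Once you have this decomposition, the grid-point step and the interpolation are unnecessary. For every $t$,
\[
p^{-2e}q^{-2}\,\ell(W_q/T^{\lceil p^etq\rceil}W_q)=\frac{1}{p^{2e}q}\sum_cB(a,b,c)\min\{\lceil p^etq\rceil,cq\},
\]
and this tends to $\frac{1}{p^e}\sum_cB(a,b,c)\alpha_{c,e}(t)$. That is essentially how the paper concludes.
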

\begin{proof}
Recall that in $\Gamma_e$ we have
$$\delta_a\delta_b=\sum_{1 \leq c \leq p^e} B(a,b,c)\delta_c,V_a\otimes V_b=\oplus V_{c}^{B(a,b,c)}.$$
Let $q=p^{e'}$ be a power of $p$ where $e'$ is a parameter in $\mathbb{N}$. We see $-\alpha''_{a,e}=\bm\delta_{a/p^e}-\bm\delta_0$ and $D(0,*,*)=D(*,0,*)=0$. So
\begin{align*}
\alpha_{a,e}*\alpha_{b,e}(t)=\int_{[0,1^+]^2}D(t_1,t_2,t)\bm\delta_{a/p^e}(t_1)\bm\delta_{b/p^e}(t_2)dt_1dt_2\\
=D(a/p^e,b/p^e,t)=1/p^{2e}D(a,b,p^et)\\
=\lim_{e' \to \infty}\frac{\ell(k[T_1,T_2]/(T_1^{aq},T_2^{bq},(T_1+T_2)^{\lceil p^etq \rceil}))}{p^{2e}q^2}. 
\end{align*}
It suffices to prove
$$\frac{1}{p^e}\sum_{1 \leq c \leq p^e} B(a,b,c)\alpha_{c,e}=\lim_{e' \to \infty}\frac{\ell(k[T_1,T_2]/(T_1^{aq},T_2^{bq},(T_1+T_2)^{\lceil p^etq \rceil}))}{p^{2e}q^2}.$$
We observe that both sides are increasing functions supported on $[0,1]$ and the left side is continuous, so it suffices to verify the equality when $t \in \mathbb{Z}[1/p]$. Moreover when $tq_0 \in \mathbb{Z}$, the right side is constant for $q \geq q_0$, so it suffices to check that when $tq \in \mathbb{Z}$,
$$\frac{1}{p^e}\sum_{1 \leq c \leq p^e} B(a,b,c)\alpha_{c,e}=\frac{\ell(k[T_1,T_2]/(T_1^{aq},T_2^{bq},(T_1+T_2)^{p^etq}))}{p^{2e}q^2}.$$
Let $G=\mathbb{Z}/p^eq\mathbb{Z}$, $H=\mathbb{Z}/p^e\mathbb{Z}$. We have
\begin{align*}
V_{aq}\otimes V_{bq}=\operatorname{Ind}^G_H(V_a)\otimes \operatorname{Ind}^G_H(V_b)=\operatorname{Ind}^G_H(V_a\otimes \operatorname{Res}^H_G\operatorname{Ind}^G_H(V_b))\\
=\operatorname{Ind}^G_H(V_a\otimes V_b)^{\oplus [G:H]}=\operatorname{Ind}^G_H(\oplus_c V_{c}^{B(a,b,c)})^{\oplus [G:H]}=\oplus_c V_{cq}^{qB(a,b,c)}.   
\end{align*}
So in $\Gamma$ we have
$$\delta_{aq}\delta_{bq}=\sum_{1 \leq c \leq p^e} qB(a,b,c)\delta_{cq}.$$
As $k[T]=k[T_1+T_2]$-modules, we have
$$k[T_1,T_2]/(T_1^{aq},T_2^{bq})\cong \oplus_{1 \leq c \leq p^e}k[T]/(T^{cq})^{\oplus qB(a,b,c)}.$$
So
$$\frac{\ell(k[T_1,T_2]/(T_1^{aq},T_2^{bq},(T_1+T_2)^{p^etq}))}{p^{2e}q^2}=\frac{1}{p^{2e}q}\sum_{1 \leq c \leq p^e}B(a,b,c)\ell(k[T]/(T^{p^etq},T^{cq})).$$
Now it suffices to verify
$$\frac{1}{p^eq}\ell(k[T]/(T^{p^etq},T^{cq}))=\alpha_{c,e}(t)=\max\{\min\{t,c/p^e\},0\},$$
which is straightforward.
\end{proof}
\begin{corollary}\label[corollary]{3.14}
Let $Z_e=\sum_{1 \leq a \leq p^e} \mathbb{Z}p^e\alpha_{a,e} \subset \mathcal{F}$. Then the map
$$\varphi_e:\Gamma_e \to Z_e,\delta_a \to p^e\alpha_{a,e}$$
is an isomorphism of abelian groups commuting with products. Therefore, $(Z_e,+,*)$ is an associative and commutative ring isomorphic to $\Gamma_e$. Also, $\mathcal{F}_e=Z_e\otimes_\mathbb{Z}\mathbb{R}=\Gamma_e\otimes_\mathbb{Z}\mathbb{R}$ is an $\mathbb{R}$-algebra.
\end{corollary}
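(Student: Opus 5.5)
The plan is to read everything off the preceding lemma, which gives $\alpha_{a,e}*\alpha_{b,e}=\frac{1}{p^e}\sum_{c}B(a,b,c)\alpha_{c,e}$. The proof then has three steps: bijectivity of $\varphi_e$, compatibility with products, and transport of the ring axioms.

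First, $\varphi_e$ is an isomorphism of abelian groups. Since $\Gamma_e=\oplus_{1\le a\le p^e}\mathbb{Z}\delta_a$ is free with basis $\delta_a$, it suffices to show that the functions $\alpha_{a,e}$, $1\le a\le p^e$, are $\mathbb{R}$-linearly independent. Their right derivatives at $t$ just to the right of $(a-1)/p^e$ form a triangular system: $\alpha_{a,e}'=1$ on $(0,a/p^e)$ and $0$ afterwards. Equivalently, evaluate at the points $b/p^e$ and observe that the matrix $(\min\{b,a\}/p^e)_{a,b}$ is invertible, since its successive differences give the identity. Hence $\varphi_e$ is injective. It is surjective onto $Z_e$ by the definition of $Z_e$.

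Second, $\varphi_e$ commutes with products. By bilinearity of $*$ (established in \Cref{3.8}) it suffices to check basis elements. For $1\le a,b\le p^e$ the lemma gives
\begin{align*}
\varphi_e(\delta_a)*\varphi_e(\delta_b)&=p^{2e}\,\alpha_{a,e}*\alpha_{b,e}=p^{e}\sum_c B(a,b,c)\alpha_{c,e}\\
&=\sum_c B(a,b,c)\varphi_e(\delta_c)=\varphi_e(\delta_a\delta_b).
\end{align*}
In particular $Z_e$ is closed under $*$, because the $B(a,b,c)$ are integers. Transporting structure along the bijection, $(Z_e,+,*)$ is isomorphic to $\Gamma_e$, and therefore inherits associativity, commutativity and the unit $\varphi_e(\delta_1)=p^e\alpha_{1,e}$ from the commutative ring $\Gamma_e$ of \Cref{2.12}.

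Third, tensor with $\mathbb{R}$. By definition $\mathcal{F}_e$ is the $\mathbb{R}$-span of the $\alpha_{a,e}$. Because these functions are $\mathbb{R}$-linearly independent, the natural map $Z_e\otimes_{\mathbb{Z}}\mathbb{R}\to\mathcal{F}_e$ is an isomorphism. The product $*$ is $\mathbb{R}$-bilinear and agrees with the ring structure on $Z_e$, so $\mathcal{F}_e\cong\Gamma_e\otimes_\mathbb{Z}\mathbb{R}$ as $\mathbb{R}$-algebras, with associativity following by bilinear extension.

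The only step needing any care is the $\mathbb{R}$-linear independence of the $\alpha_{a,e}$. This is needed both for injectivity and for identifying $\mathcal{F}_e$ with the tensor product, and the triangularity argument above settles it. Everything else is formal.
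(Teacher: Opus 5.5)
Your proposal is correct and is essentially the paper's argument: the paper gives no separate proof and treats the corollary as immediate from the preceding lemma $\alpha_{a,e}*\alpha_{b,e}=\frac{1}{p^e}\sum_c B(a,b,c)\alpha_{c,e}$, combined with bilinearity of $*$ and the fact that the $B(a,b,c)$ are the structure constants of $\Gamma_e$ (\Cref{2.12}). You additionally make explicit the $\mathbb{R}$-linear independence of the $\alpha_{a,e}$, which the paper leaves tacit and which is needed both for injectivity and for $\mathcal{F}_e\cong Z_e\otimes_\mathbb{Z}\mathbb{R}$; one minor point is that bilinearity of $*$ comes directly from its definition as a Stieltjes integral, linear in each integrator, rather than from \Cref{3.8}.
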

\begin{theorem}\label{3.15}
$*$ is associative on $\mathcal{F}$. Therefore, $(\mathcal{F},+,*)$ is an associative and commutative $\mathbb{R}$-algebra, and $\Gamma_e$ embeds into $\mathcal{F}$ with image $Z_e$.    
\end{theorem}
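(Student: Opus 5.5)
The plan is to prove associativity by approximation, reducing to the finite-level rings $Z_e\cong\Gamma_e$, where associativity is known from \Cref{3.14}. By bilinearity and $\mathcal{F}=\mathcal{F}^+-\mathcal{F}^+$, it suffices to show $(f*g)*h=f*(g*h)$ for $f,g,h\in\mathcal{F}^+$.

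First, apply the approximation lemma to obtain $f_e,g_e,h_e$ with uniformly bounded one-sided derivatives converging pointwise to $f,g,h$. The approximants should lie in $\mathcal{F}^+\cap\mathcal{F}_e$. The piecewise-linear interpolation of a concave increasing function at the points $1/p^e\mathbb{Z}$ is again concave and increasing, vanishes at $0$, is constant on $[1,\infty)$, and has derivatives bounded by $f'_+(0)$. So the approximants are indeed in $\mathcal{F}^+\cap\mathcal{F}_e$, with bounds uniform in $e$.

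Second, on each $\mathcal{F}_e=Z_e\otimes_\mathbb{Z}\mathbb{R}$ the product $*$ is associative. By the lemma computing $\alpha_{a,e}*\alpha_{b,e}$, the product on the basis $\{\alpha_{a,e}\}$ matches the multiplication in $\Gamma_e$ (rescaled by $p^e$), and $\Gamma_e$ is associative as a representation ring. Since $*$ is bilinear, associativity extends from the basis to real linear combinations. Hence
$$(f_e*g_e)*h_e=f_e*(g_e*h_e)\quad\text{for every } e.$$

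Third, pass to the limit using \Cref{3.9} and \Cref{3.10}.
\begin{enumerate}
\item By \Cref{3.10}, $f_e*g_e\to f*g$ pointwise. By \Cref{3.8} these products lie in $\mathcal{F}^+$, and by \Cref{3.9} their derivatives are uniformly bounded.
\item Applying \Cref{3.10} again to the pair $(f_e*g_e,\,h_e)$ gives $(f_e*g_e)*h_e\to(f*g)*h$ pointwise.
\item Symmetrically, $f_e*(g_e*h_e)\to f*(g*h)$ pointwise.
\end{enumerate}
Equating the two limits gives associativity on $\mathcal{F}^+$, and bilinearity then gives it on all of $\mathcal{F}$. Commutativity is already in \Cref{3.8}. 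The embedding statement $\Gamma_e\hookrightarrow\mathcal{F}$ with image $Z_e$ is \Cref{3.14}.

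The main obstacle is checking the hypotheses of \Cref{3.10} at the second stage. Its limits are required to lie in $\mathcal{F}$, and its proof uses concavity to get convergence of derivatives, so the intermediate products $f_e*g_e$ must be in $\mathcal{F}^+$ with uniformly bounded derivatives. This is exactly what \Cref{3.8} and \Cref{3.9} provide, and it is why I insist that the approximants themselves are in $\mathcal{F}^+$ rather than merely in $\mathcal{F}$.
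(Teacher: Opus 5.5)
Your proposal is correct and matches the paper's proof: reduce to $f,g,h\in\mathcal{F}^+$, approximate by piecewise-linear interpolants in $\mathcal{F}_e\cap\mathcal{F}^+$ where associativity comes from $\Gamma_e$ via \Cref{3.14}, and pass to the limit using \Cref{3.8}, \Cref{3.9} and \Cref{3.10}. Your extra checks spell out what the paper leaves implicit: that the interpolants stay in $\mathcal{F}^+$ with uniform derivative bounds, and that \Cref{3.10} is applied twice with the intermediate products in $\mathcal{F}^+$.
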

\begin{proof}
By bilinearity of $*$ and $\mathcal{F}=\mathcal{F}^+-\mathcal{F}^+$, it suffices to prove for $f,g,h \in \mathcal{F}^+$,
$$(f*g)*h=f*(g*h).$$
We choose sequences $f_e,g_e,h_e \in \mathcal{F}_e\cap \mathcal{F}^+$ such that $f_e \to f,g_e \to g,h_e\to h$ pointwisely and the derivatives of $f_e,g_e,h_e$ are uniformly bounded. The associativity on $\mathcal{F}_e\cap \mathcal{F}^+$ leads to
$$(f_e*g_e)*h_e=f_e*(g_e*h_e).$$
Now $f_e*g_e$ and $g_e*h_e$ are families in $\mathcal{F}^+$ with bounded derivatives by \Cref{3.9}. So letting $e \to \infty$, we get the associativity for $f,g,h$ by \Cref{3.10}.
\end{proof}
\subsection{The functional counterpart of the induction and the restriction of representations.}
In this subsection, we express the induction and restriction of representations as maps between $Z_e$ and $Z_{e+1}$. Let $H=\mathbb{Z}/p^e\mathbb{Z} \hookrightarrow{}G=\mathbb{Z}/p^{e+1}\mathbb{Z}$. Here $\operatorname{Ind}=\operatorname{Ind}^G_H$ and $\operatorname{Res}=\operatorname{Res}^H_G$.
\begin{proposition}
Let $R_e$ be the map which sends $f \in \mathcal{F}$ to the unique function $R_e(f) \in \mathcal{F}$ which is piecewise linear on $[a/p^e,(a+1)/p^e]$ and coincides with $f$ on $1/p^e\mathbb{Z}$. Then the following two diagrams commute.
\begin{center}
\begin{tikzcd}
\Gamma_e\arrow[r, "\operatorname{Ind}"] \arrow[d,"\varphi_e"]& \Gamma_{e+1} \arrow[d,"\varphi_{e+1}"] \\
Z_e \arrow[r,"p"]& Z_{e+1}
\end{tikzcd}    
\end{center}
and
\begin{center}
\begin{tikzcd}
\Gamma_{e+1}\arrow[r, "\operatorname{Res}"] \arrow[d,"\varphi_{e+1}"]& \Gamma_e \arrow[d,"\varphi_e"] \\
Z_{e+1} \arrow[r,"R_e"]& Z_e
\end{tikzcd}    
\end{center}
\end{proposition}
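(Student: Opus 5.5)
Both diagrams are $\mathbb{Z}$-linear, and $\Gamma_e,\Gamma_{e+1}$ have $\mathbb{Z}$-bases $\{\delta_a\}$, so it suffices to check commutativity on each basis element $\delta_a$. We use the explicit formulas for induction and restriction recalled earlier, with $q=p$.

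\emph{First diagram.} For $1\le a\le p^e$ we have $\operatorname{Ind}\delta_a=\delta_{ap}$. Hence
$$\varphi_{e+1}(\operatorname{Ind}\delta_a)=p^{e+1}\alpha_{ap,e+1}.$$
On the other side, $p\,\varphi_e(\delta_a)=p\cdot p^e\alpha_{a,e}$. The two agree because
$$\alpha_{ap,e+1}(t)=\max\{\min\{t,ap/p^{e+1}\},0\}=\alpha_{a,e}(t).$$
So this square commutes, and it is essentially a one-line identity.

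\emph{Second diagram.} Take $1\le a\le p^{e+1}$ and write $a=bp+r$ with $0\le r\le p-1$. Then
$$\operatorname{Res}\delta_a=(p-r)\delta_b+r\delta_{b+1},$$
where $\delta_0=0$, which happens when $b=0$. Applying $\varphi_e$ gives $p^e\big((p-r)\alpha_{b,e}+r\alpha_{b+1,e}\big)$. We must compare this with
$$R_e(p^{e+1}\alpha_{a,e+1})=p^{e+1}R_e(\alpha_{a,e+1}).$$
Since both sides are piecewise linear on the intervals $[c/p^e,(c+1)/p^e]$, it suffices to compare their values at the grid points $c/p^e$.

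At $t=c/p^e$ the left side equals $p^{e+1}\min\{c,a/p\}/p^e=p\min\{c,b+r/p\}$, clamped at $0$ for $c\le 0$. The right side equals $(p-r)\min\{c,b\}+r\min\{c,b+1\}$, again clamped for $c\le0$. We check the cases:
\begin{itemize}
\item for $c\le b$, both sides equal $pc$;
\item for $c\ge b+1$, both sides equal $pb+r$;
\item for $c\le 0$, both sides vanish.
\end{itemize}
Hence the two functions agree on $\frac{1}{p^e}\mathbb{Z}$. Both are also piecewise linear there, and both are constant beyond $1$, so they coincide. This shows $R_e$ maps $Z_{e+1}$ into $Z_e$ compatibly with $\operatorname{Res}$.

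The main point needing care is the edge cases: $b=0$, where $\delta_0=0$ matches $\alpha_{0,e}=0$, and $a=p^{e+1}$, where $b=p^e$ and $r=0$. Both fit the same computation.
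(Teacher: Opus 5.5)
Your proof is correct and follows essentially the same route as the paper. The first square reduces to the identity $\alpha_{ap,e+1}=\alpha_{a,e}$, and the second is checked on basis elements by comparing values at the grid points $c/p^e$, splitting into $c\le b$ (both sides $pc$) and $c\ge b+1$ (both sides $a$); your explicit treatment of the edge cases $b=0$ and $a=p^{e+1}$ is a small addition that the paper leaves implicit.
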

\begin{proof}
The induction maps $\delta_a$ to $\delta_{ap}$, thus $\delta_a$ is mapped to $p^{e+1}\alpha_{ap,e+1}$ from both directions, so the commutativity of the first diagram is proved.

Take $0 \leq a \leq p^{e+1}$ and write $a=bp+r$ with $0 \leq b \leq p^e,0 \leq r \leq p-1$. We want to show
$$R_e(\varphi_{e+1}\delta_a)=\varphi_e(\operatorname{Res}(\delta_a))=\varphi_e((p-r)\delta_b+r\delta_{b+1}).$$
In other words,
$$R_e(p^{e+1}\alpha_{a,e+1})=p^e(p-r)\alpha_{b,e}+p^er\alpha_{b+1,e}.$$
Since the image of an element under $\varphi_e$ is already piecewise linear on $[c/p^e,(c+1)/p^e]$, it suffices to show for any integer $0 \leq c \leq p^e$,
$$p^{e+1}\alpha_{a,e+1}(c/p^e)=p^e(p-r)\alpha_{b,e}(c/p^e)+p^er\alpha_{b+1,e}(c/p^e).$$
It is staightforward to verify that for $c \leq b$ both sides are equal to $cp$ and for $c \geq b+1$ both sides are equal to $a$. So the commutativity of the second diagram is proved.
\end{proof}
We see $\lim_{e \to \infty} R_e$ is the identity map on $\mathcal{F}$. The Frobenius reciprocity for functions in $\mathcal{F}$ takes the form
$$\int_{[0,1^+]^2}D(t_1,t_2,t)d(-h'_1(t_1))d(-(ph_2)'(t_2))=p\cdot\int_{[0,1^+]^2}D(t_1,t_2,t)d(-h'_1(t_1))d(-h'_2(t_2)),$$
which is a trivial identity.
\subsection{A norm on the real algebra $\mathcal{F}$}
We get the convergence result and associativity of the $*$-product from pointwise convergence in $\mathcal{F}$, which does not come from any norm. Now we want to introduce a norm on $\mathcal{F}$. 
\begin{proposition}
Let $\|\cdot\|:\mathcal{F}^+ \to \mathbb{R}_{\geq 0},f \to \int_0^1|f'(t)|dt$.
\begin{enumerate}
\item For $f \in \mathcal{F}^+$, $\|f\|=f(1)$.
\item For $f,g \in \mathcal{F}^+$, $\|f+g\|=\|f\|+\|g\|$.
\item For $f,g \in \mathcal{F}^+$, $\|f*g\|=\|f\|\cdot\|g\|$.
\end{enumerate}
\end{proposition}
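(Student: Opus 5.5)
For (1), take $f \in \mathcal{F}^+$. On $[0,\infty)$ the function $f$ is increasing, so $f' \geq 0$ almost everywhere. It is also Lipschitz, so the fundamental theorem of calculus applies, and since $f(0)=0$ we get
$$\|f\| = \int_0^1 f'(t)\,dt = f(1)-f(0) = f(1).$$
For (2), note that $\mathcal{F}^+$ is a cone, so $f+g \in \mathcal{F}^+$. Applying (1) three times gives
$$\|f+g\| = (f+g)(1) = f(1)+g(1) = \|f\|+\|g\|.$$

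For (3), \Cref{3.8} gives $f*g \in \mathcal{F}^+$, so by (1) it suffices to show $(f*g)(1)=f(1)g(1)$. This is exactly the computation in part (2) of the proof of \Cref{3.8}. By \Cref{3.6}(4), $D(t_1,t_2,1)=t_1t_2$ for $t_1,t_2 \in [0,1]$. Hence
$$(f*g)(1)=\int_{[0,1^+]^2} t_1t_2 \, d(-f'(t_1))\,d(-g'(t_2)).$$
This double Riemann–Stieltjes integral factors as a product of one-variable integrals $\int_{[0,1^+]} t\,d(-f'(t))$. We evaluate each factor by integration by parts, using the monotone extension of $f'$ that equals $0$ just to the right of $1$ (allowed by \Cref{3.3}). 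This gives
$$\int_{[0,1^+]} t\,d(-f'(t)) = \bigl[-t f'(t)\bigr]_{0}^{1^+}+\int_0^1 f'(t)\,dt = f(1).$$
The boundary term vanishes: at $1^+$ the extension of $f'$ is $0$, and at $0$ we have $t=0$ while $f'_+(0)$ is finite.

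The only delicate point is this integration by parts. It requires the one-sided derivative to be bounded, which is condition (4) of \Cref{3.1}, and the mass of $-f''$ near $0$ must not contribute. The latter holds because the integrand $t$ vanishes at $0$, and this is also why $D(0,\cdot,\cdot)=0$ makes the integral in \Cref{3.3} independent of the choice of extension. Once this is settled, (3) follows immediately.
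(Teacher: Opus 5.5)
Your proof is correct and follows the paper's argument: (1) comes from the fundamental theorem of calculus with $f'\ge 0$, (2) follows from (1), and (3) reduces via \Cref{3.8} to computing $(f*g)(1)$, using $D(t_1,t_2,1)=t_1t_2$ and integration by parts to obtain $f(1)g(1)$. The only cosmetic difference is that you factor the double integral first and integrate by parts in one variable, whereas the paper integrates by parts in both variables at once to reach $\int f'(x)g'(y)\,dx\,dy$.
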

\begin{proof}
(1): We have $\|f\|=\int_0^1|f'(t)|dt=\int_0^1f'(t)dt=f(1)$.

(2): This is true by (1).

(3): Since $f,g \in \mathcal{F}^+$, $f*g \in \mathcal{F}^+$, so
\begin{align*}
\|f*g\|=f*g(1)=\int_{[0,1^+]^2}D(x,y,1)d(-f'(x))d(-g'(y))\\
=\int_{[0,1^+]^2}xyd(-f'(x))d(-g'(y))\\
\overset{\textup{integration by parts}}{=}\int_{[0,1^+]^2}f'(x)g'(y)dxdy\\
=f(1)g(1)=\|f\|\cdot\|g\|.
\end{align*}
\end{proof}
\begin{definition}
Let $\|\cdot\|:\mathcal{F}\to\mathbb{R}_{\geq 0}$ be the following map: $\|f\|$ is the infimum of $\|f_1\|+\|f_2\|$ where $f_1,f_2 \in \mathcal{F}^+$ such that $f=f_1-f_2$.    
\end{definition}
\begin{proposition}\label[proposition]{3.19}
\begin{enumerate}
\item The two definitions of $\|\cdot\|$ on $\mathcal{F}^+$ coincide.
\item $\|\cdot\|$ is a norm.
\item For $f,g \in \mathcal{F}$, $\|f*g\|\leq\|f\|\cdot\|g\|$. So $\mathcal{F}$ is a normed algebra.
\end{enumerate}    
\end{proposition}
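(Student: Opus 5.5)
For part (1), take $f\in\mathcal{F}^+$ and any decomposition $f=f_1-f_2$ with $f_1,f_2\in\mathcal{F}^+$. Then $f_1=f+f_2$. By additivity of the old norm on $\mathcal{F}^+$ (part (2) of the preceding proposition), the old norm gives $\|f_1\|+\|f_2\|=\|f\|+2\|f_2\|\ge\|f\|$. The trivial decomposition $f=f-0$ attains this bound, so the infimum equals $f(1)=\int_0^1|f'|$ and the two definitions agree.

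For part (2), the main obstacle is definiteness. I would first record the lower bound $\|f\|\ge\int_0^1|f'(t)|\,dt$ for every $f\in\mathcal{F}$. Indeed, for $f=f_1-f_2$ we have $|f'|\le f_1'+f_2'$ almost everywhere, since $f_i'\ge0$ on $[0,1]$ for $f_i\in\mathcal{F}^+$. Integrating gives $\int_0^1|f'|\le f_1(1)+f_2(1)=\|f_1\|+\|f_2\|$. Now if $\|f\|=0$, then $f'=0$ almost everywhere on $[0,1]$. Since $f$ is Lipschitz with $f(0)=0$ and is constant on $[1,\infty)$ and zero on $(-\infty,0]$, this forces $f=0$. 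The remaining properties are routine:
\begin{itemize}
\item $\|f\|\ge0$ holds trivially.
\item For homogeneity with $\lambda\ge0$, $f=f_1-f_2$ gives $\lambda f=\lambda f_1-\lambda f_2$, and the inverse scaling gives the reverse inequality. For $\lambda<0$, swap $f_1$ and $f_2$.
\item For the triangle inequality, add near-optimal decompositions: $f+g=(f_1+g_1)-(f_2+g_2)$, and use additivity on $\mathcal{F}^+$.
\end{itemize}

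For part (3), given $\varepsilon>0$ choose $f=f_1-f_2$ and $g=g_1-g_2$ with $\|f_1\|+\|f_2\|\le\|f\|+\varepsilon$, and similarly for $g$. By bilinearity,
\[
f*g=(f_1*g_1+f_2*g_2)-(f_1*g_2+f_2*g_1).
\]
Both bracketed terms lie in $\mathcal{F}^+$ by \Cref{3.8}, so this is an admissible decomposition. By additivity and multiplicativity of the norm on $\mathcal{F}^+$ (parts (2) and (3) of the preceding proposition),
\[
\|f*g\|\le \|f_1\|\|g_1\|+\|f_2\|\|g_2\|+\|f_1\|\|g_2\|+\|f_2\|\|g_1\|=(\|f_1\|+\|f_2\|)(\|g_1\|+\|g_2\|).
\]
The right side is at most $(\|f\|+\varepsilon)(\|g\|+\varepsilon)$. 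Letting $\varepsilon\to0$ gives $\|f*g\|\le\|f\|\,\|g\|$. Combined with \Cref{3.15}, this makes $\mathcal{F}$ a normed algebra.
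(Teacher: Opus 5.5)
Your proposal is correct and follows the paper's proof essentially step for step. Both arguments decompose elements into differences of elements of $\mathcal{F}^+$, use additivity and multiplicativity of $f\mapsto f(1)$ on $\mathcal{F}^+$, and pass to the infimum. Your explicit definiteness argument via $\|f\|\ge\int_0^1|f'(t)|\,dt$ and your cross terms $f_1*g_1+f_2*g_2$ in (3) are more careful than the written proof: that proof leaves definiteness implicit, and at the same step in (3) it has the typo $f_1*f_2+g_1*g_2$.
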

\begin{proof}
(1) If $f=f_1-f_2$, then $f'=f_1'-f_2'$, so $\int_0^1|f'(t)|dt\leq \int_0^1|f_1'(t)|dt+\int_0^1|f_2'(t)|dt$. Equality holds when $f_1=f,f_2=0$ and in this case $f_1,f_2 \in \mathcal{F}^+$. So the equality can be achieved by two elements $f_1,f_2 \in \mathcal{F}^+$ and the two definitions of $\|\cdot\|$ coincide on $\mathcal{F}^+$.

(2) First, $\|f\|=\|-f\|$ because $f=f_1-f_2$ implies $-f=f_2-f_1$. Then, for $\lambda>0$, $f=f_1-f_2$ implies $\lambda f=\lambda f_1-\lambda f_2$ and $\lambda f_1,\lambda f_2 \in \mathcal{F}^+$, so $\|\lambda f\|=\lambda\|f\|$. Finally, if $f=f_1-f_2,g=f_3-f_4$ for $f_1,f_2,f_3,f_4 \in \mathcal{F}^+$, then $f+g=(f_1+f_3)-(f_2+f_4), f_1+f_3,f_2+f_4 \in \mathcal{F}^+$. In this case
$$\int_0^1|f'_1(t)+f'_3(t)|+|f_2'(t)+f_4'(t)|dt\leq \int_0^1|f'_1(t)|+|f'_2(t)|+|f_3'(t)|+|f_4'(t)|dt.$$
Taking infimum over all $f_1,f_2,f_3,f_4$, we see
$$\int_0^1|f'_1(t)+f'_3(t)|+|f_2'(t)+f_4'(t)|dt\leq \|f\|+\|g\|.$$
Thus $\|f+g\|\leq \|f\|+\|g\|$. So $\|\cdot\|$ is a norm.

(3) We take $\epsilon>0$ and $f_1,f_2,g_1,g_2$ such that $f=f_1-f_2,g=g_1-g_2$, $\|f_1\|+\|f_2\|\leq \|f\|+\epsilon$ and $\|g_1\|+\|g_2\|\leq \|g\|+\epsilon$. We have
$f*g=(f_1-f_2)*(g_1-g_2)=(f_1*f_2+g_1*g_2)-(f_1*g_2+f_2*g_1)$ with $f_1*f_2+g_1*g_2,f_1*g_2+f_2*g_1\in\mathcal{F}^+$. Thus
\begin{align*}
\|f*g\|\leq \|f_1*f_2+g_1*g_2\|+\|f_1*g_2+f_2*g_1\|\\
=\|f_1\|\|f_2\|+\|g_1\|\|g_2\|+\|f_1\|\|g_2\|+\|f_2\|\|g_1\|\\
=(\|f_1\|+\|f_2\|)(\|g_1\|+\|g_2\|)\\
\leq (\|f\|+\epsilon)(\|g\|+\epsilon).
\end{align*}
Letting $\epsilon \to 0$, we get the desired inequality.
\end{proof}
The completion of $\mathcal{F}$ will be a Banach algebra, and $\Gamma_e\otimes_\mathbb{Z}\mathbb{R} \cong Z_e\otimes_\mathbb{Z}\mathbb{R}$ is a Banach subalgebra of finite dimension in its completion. However, at this stage we don't know if $\mathcal{F}$ itself is complete or not.
\begin{question}
What is the completion $\hat{\mathcal{F}}$, and what is the closure of $\cup_{e \geq 0}(Z_e\otimes_\mathbb{Z}\mathbb{R})$ in $\hat{\mathcal{F}}$? 
\end{question}

\subsection{The application to asymptotic behavior of tensor powers}
The product structure of $\mathcal{F}$ allows us to express powers of elements in $Z_e$ in terms of the kernel function $D$. We show this relationship in the following calculations.

Suppose $V=\oplus_{1 \leq i \leq p^e}V_i^{e_i}$ is a $k$-object annihilated by $T^{p^e}$. Its class in $\Gamma_e$ is $\gamma=[V]=\sum_{1 \leq i \leq p^e}e_i\delta_i$ and its image in $\mathcal{F}$ is $f=\varphi_e(\gamma)=p^e\sum_{1 \leq i \leq p^e}e_i\alpha_{i,e}$. Therefore, $\varphi_e(\gamma^m)=f^{*m}$. $f^{*m}$ is a sequence of elements satisfying $f^{*1}=f$ and $f^{*m}=f*f^{*(m-1)}$. Thus we have
$$f^{*m}(t)=f*f^{*(m-1)}(t)=\int_{[0,1^+]^2}D(t_1,t_2,t)d(-f'(t_1))d(-(f^{*(m-1)})'(t_2)).$$
We see $f'=p^e\sum_{1 \leq i \leq p^e}e_i\chi_{[0,a/p^e]}(t),f''=p^e\sum_{1 \leq i \leq p^e}e_i(\bm\delta_0(t)-\bm\delta_{a/p^e}(t))$, and $D(0,*,*)=0$. So
$$\int_{t_1 \in [0,1^+]}D(t_1,t_2,t)d(-f'(t_1))=p^e\sum_{1 \leq i \leq p^e}e_iD(i/p^e,t_2,t).$$
This means
\begin{align*}
f^{*m}(t)=\int_{t_2 \in [0,1^+]}p^e\sum_{1 \leq i \leq p^e}e_iD(i/p^e,t_2,t)d(-(f^{*(m-1)})'(t_2))\\
=\int_{t_2 \in [0,1^+]}p^e\sum_{1 \leq i \leq p^e}e_i(-\frac{\partial^2D}{\partial t_2^2})(i/p^e,t_2,t)f^{*(m-1)}(t_2)dt_2,
\end{align*}
where the last step uses integration by parts. We can write 
$$K(x,t)=p^e\sum_{1 \leq i \leq p^e}e_i(-\frac{\partial^2D}{\partial t_2^2})(i/p^e,x,t),$$ 
then
$$f^{*m}(x)=\int_0^{1^+}K(x,t)f^{*(m-1)}(t)dt.$$
Therefore, $f^{*m}$ is an iteration of a Fredholm integral transform. Here $K(x,t)$ is a generalized function.

If the eigenvalue and eigenfunction of the kernel $K(x,t)$ are clear, then $f^{*m}$ can be expressed in terms of these data. However, these data are hard to compute. On the other hand, we can compute $f^{*m}(x)$ for all $m$ using power series.

We denote $\Phi(\alpha,x)=\sum_{i \geq 1}\alpha^if^{*i}(x)$. Note that since $\max f^{*m}[0,1]\leq \|f^{*m}\|\leq \|f\|^m$, this series converges to a real function for $\alpha$ in a sufficiently small neighbourhood of $0$. Then
$$\Phi(\alpha,x)-\int_0^{1^+}\alpha K(x,t)\Phi(\alpha,t)dt=\alpha f(x).$$
This gives an integral equation of $\Phi(\alpha,x)$, and $f^{*m}(x)$ is the coefficient of $\alpha^m$ when we find the Taylor expansion of $\Phi$ with respect to $\alpha$ near $0$. When this equation can be solved explicitly, we have a way to compute $f^{*m}$ in terms of $m$, and hence $\gamma^m$ is computable. This is the idea behind the proof of \cite[Corollary 8.8]{arxivAnalysis}.
\section{Asymptotic behavior of syzygies and cosyzygies of $K$}
This section is devoted to the asymptotic behavior of syzygies and cosyzygies of the trivial representation in modular representation theory. In this section, we assume $k$ is a field and $R$ is a commutative, associative, finite-dimensional $k$-algebra throughout this section. We assume every $R$-module is finitely generated (hence finite-dimensional over $k$). If $M$ is an $R$-module, $M^*=\operatorname{Hom}_k(M,k)$ is also an $R$-module via premultiplication.
\subsection{Modules over symmetric algebras}
\begin{definition}
$R$ is called a symmetric algebra if $R^*=R$ as an $R$-module.    
\end{definition}
When $R$ is local, then for any module $M$, the submodule $\operatorname{Hom}_R(k,M)=0:_R\mathfrak{m}$ of $M$ is called the socle of $M$, denoted by $\operatorname{soc}(M)$. The number of generators of $M$ and the length of $\operatorname{soc}(M)$ are denoted by $\mu(M)$ and $r(M)$ respectively.
\begin{lemma}
Let $M,N$ be two $R$-modules. Then $\operatorname{Hom}_R(M,N)\cong \operatorname{Hom}_R(N^*,M^*)$, and $\mu(N)=r(N^*)$. Moreover if $R$ is symmetric, then
\begin{enumerate}
\item $\operatorname{Hom}_R(M,R)\cong M^*$. That is, the $R$-dual and $k$-dual coincide.
\item $\operatorname{Hom}_R(M,N)\cong (M\otimes_R N^*)^*$.
\end{enumerate}
\end{lemma}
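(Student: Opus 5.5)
The plan is to deduce everything from the tensor–Hom adjunction over $k$, the double dual $M^{**}\cong M$, and, for the symmetric case, the identification $R^*\cong R$. All modules are finite-dimensional, so $(-)^*$ is an exact contravariant involution on finitely generated $R$-modules. Throughout, the $R$-action on $M^*$ is $(r\phi)(m)=\phi(rm)$, which is unambiguous since $R$ is commutative.

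\emph{The general statements.} For $\operatorname{Hom}_R(M,N)\cong\operatorname{Hom}_R(N^*,M^*)$ I will use the transpose map $f\mapsto f^*$. It is $R$-linear in the appropriate sense because $(f^*(r\psi))(m)=\psi(rf(m))=\psi(f(rm))$. It is bijective with inverse $g\mapsto g^*$, using $M^{**}\cong M$ and $N^{**}\cong N$ naturally.

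For $\mu(N)=r(N^*)$, recall that when $R$ is local with residue field $k$ we have $\mu(N)=\dim_k N/\mathfrak m N$ by Nakayama. Dualizing the exact sequence $\mathfrak m N\to N\to N/\mathfrak m N\to 0$ gives $0\to (N/\mathfrak m N)^*\to N^*\to(\mathfrak m N)^*$. Hence $(N/\mathfrak m N)^*$ is the set of functionals on $N$ vanishing on $\mathfrak m N$, which is exactly $\{\phi\in N^*:\mathfrak m\phi=0\}=\operatorname{soc}(N^*)$. Taking dimensions gives the equality. Alternatively, this follows from the first statement applied to $\operatorname{Hom}_R(k,N^*)\cong\operatorname{Hom}_R(N,k)$, since $k^*\cong k$ and $\dim\operatorname{Hom}_R(N,k)=\dim (N/\mathfrak mN)^*$.

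\emph{The symmetric case.} For (1), apply the first statement and $R\cong R^*$:
\[
\operatorname{Hom}_R(M,R)\cong\operatorname{Hom}_R(R^*,M^*)\cong\operatorname{Hom}_R(R,M^*)\cong M^*.
\]
For (2), I will use the standard adjunction $(A\otimes_R B)^*=\operatorname{Hom}_k(A\otimes_R B,k)\cong\operatorname{Hom}_R(A,\operatorname{Hom}_k(B,k))=\operatorname{Hom}_R(A,B^*)$. Taking $A=M$ and $B=N^*$ gives
\[
(M\otimes_R N^*)^*\cong\operatorname{Hom}_R(M,N^{**})\cong\operatorname{Hom}_R(M,N).
\]
I note that (2) does not actually need symmetry, and that each isomorphism above is natural, so the chain composes cleanly.

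\emph{Main obstacle.} The only delicate point is bookkeeping: checking that the adjunction isomorphism respects $R$-balancedness. Concretely, a $k$-linear $\Phi:A\otimes_k B\to k$ factors through $A\otimes_R B$ exactly when the corresponding map $A\to B^*$ is $R$-linear, which uses the definition of the module structure on $B^*$ given above. Locality of $R$ is needed only for the socle statement.
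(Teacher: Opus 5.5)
Your proposal is correct. The transpose isomorphism and part (1) match the paper's argument exactly. Your alternative derivation of $\mu(N)=r(N^*)$ via $\operatorname{Hom}_R(k,N^*)\cong\operatorname{Hom}_R(N,k)$ is precisely the paper's route. Your primary argument instead identifies $\operatorname{soc}(N^*)$ with the functionals vanishing on $\mathfrak m N$; it is an equally short, more hands-on alternative.

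The genuine difference is in (2). The paper uses symmetry twice:
\[
\operatorname{Hom}_R(M,N)\cong\operatorname{Hom}_R(N^*,M^*)\cong\operatorname{Hom}_R(N^*,\operatorname{Hom}_R(M,R))\cong\operatorname{Hom}_R(N^*\otimes_R M,R)\cong(M\otimes_R N^*)^*.
\]
It first rewrites $M^*$ as $\operatorname{Hom}_R(M,R)$ by (1), then applies the $R$-linear tensor--Hom adjunction, and finally invokes (1) again to convert the $R$-dual into a $k$-dual. You instead apply the $k$-linear adjunction $\operatorname{Hom}_k(M\otimes_R N^*,k)\cong\operatorname{Hom}_R(M,N^{**})$ directly. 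This never passes through $R$-duals, so, as you observe, (2) holds for every finite-dimensional commutative $k$-algebra, not just symmetric ones. The paper's route only makes (2) appear as a formal consequence of (1), at the price of a hypothesis that is not actually needed.
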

\begin{proof}
Since $M\cong M^{**}$, $f \to f^*$ gives an isomorphism $\operatorname{Hom}_R(M,N)\cong \operatorname{Hom}_R(N^*,M^*)$. Apply this isomorphism to the case $M=k$, we see $\mu(N^*)=r(N)$, and replace $N$ by $N^*$, we get $\mu(N)=r(N^*)$. If $R$ is symmetric, then $\operatorname{Hom}_R(M,R)=\operatorname{Hom}_R(R^*,M^*)=\operatorname{Hom}_R(R,M^*)=M^*$. So $\operatorname{Hom}_R(M,N)=\operatorname{Hom}_R(N^*,M^*)=\operatorname{Hom}_R(N^*,\operatorname{Hom}_R(M,R))=\operatorname{Hom}_R(N^*\otimes_RM,R)=(M\otimes_R N^*)^*$. 
\end{proof}
We recall the definition of $n$-th syzygy and cosyzygy of a module, denoted by $\Omega^n,\Omega^{-n}$ respectively. In some literature, they are also called Heller shifts.
\begin{definition}
Assume $R$ is local. For $n>0$ and a module $M$, we define $\Omega(M)$ to be the syzygy module of $M$, that is, there is a projective cover $P\to M$ and $\Omega(M)$ is its kernel. We define $\Omega^n(M)=\Omega^{n-1}(\Omega(M))$ for $n>0$. We define $\Omega^{-1}(M)$ to be the cosyzygy module of $M$, that is, there is an injective hull $M\to E$ and $\Omega^{-1}(M)$ is its cokernel. We define $\Omega^{-n}(M)=\Omega^{1-n}(\Omega^{-1}(M))$ for $n>0$. Denote $\Omega^0(M)=M$.
\end{definition}
Let $\beta^R_i(M)$ be the $i$-th Betti number of $M$ with respect to $R$. Let $\gamma^R_i(M)=\sum_{0 \leq j \leq i}(-1)^{i-j}\beta^R_j(M)$ be the $i$-th truncated alternating sum of Betti numbers. We define $\gamma^R_{i}=0$ for $i<0$ by convention. We omit $R$ in the upper index of $\beta^R_i$ or $\gamma^R_i$ if $R$ is clear.
\begin{lemma}\label[lemma]{4.4}
Assume $n \in \mathbb{N}$ and $M$ is an $R$-module.
\begin{enumerate}
\item When $R$ is symmetric, $\Omega^{-n}(M)=(\Omega^n(M^*))^*$.
\item $\ell(\Omega^n(M))=\gamma_{n-1}(M)\cdot \ell(R)+(-1)^n\ell(M)$.
\item When $R$ is symmetric, $\ell(\Omega^{-n}(M))=\gamma_{n-1}(M^*)\cdot \ell(R)+(-1)^n\ell(M^*)$.
\item $\mu(\Omega^n(M))=r(\Omega^{-n}(M^*))=\beta_n(M)$, and when $R$ is symmetric and $n \geq 1$, $\mu(\Omega^{-n}(M))=r(\Omega^n(M^*))=\beta_{n-1}(M^*)$.
\end{enumerate}    
\end{lemma}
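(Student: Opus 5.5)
Throughout, $R$ is local, so $R$ is its own projective cover and injective hull of $k$. I would therefore work with minimal projective resolutions of $M$ and minimal injective resolutions of $M$. The four parts are handled in the order (1), (2), (4), (3). Part (3) is a formal consequence of (1) and (2), so the real content lies in (1), (2) and the socle/generator bookkeeping in (4).

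For (1), take a minimal projective presentation $0\to\Omega(M^*)\to P\to M^*\to 0$ with $P$ a projective cover. Dualizing with $(-)^*$, which is exact, gives
\[
0\to M\to P^*\to(\Omega(M^*))^*\to0 .
\]
Since $R$ is symmetric, $P^*$ is projective, hence injective, because $R^*\cong R$ is self-injective. It remains to see that $M\to P^*$ is an injective hull, i.e.\ that it is essential. This holds because surjectivity onto $\operatorname{soc}(P^*)$ dualizes to $P\to M^*$ inducing an isomorphism on tops; here I use the earlier lemma $\mu(N)=r(N^*)$ together with minimality of the cover. Hence $\Omega^{-1}(M)=(\Omega(M^*))^*$. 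For $\Omega^{-n}$ I induct on $n$, using $M^{**}\cong M$. This essentiality check is the only genuinely delicate point, and I expect it to be the main obstacle.

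For (2), use the minimal resolution $\cdots\to R^{\beta_1}\to R^{\beta_0}\to M\to 0$. Its truncation
\[
0\to\Omega^n(M)\to R^{\beta_{n-1}}\to\cdots\to R^{\beta_0}\to M\to0
\]
is exact, so the alternating sum of lengths vanishes. This gives
\[
\ell(\Omega^n M)=(-1)^n\ell(M)+\sum_{j\le n-1}(-1)^{n-1-j}\beta_j\,\ell(R)=\gamma_{n-1}(M)\ell(R)+(-1)^n\ell(M),
\]
with the case $n=0$ trivial. Part (3) then follows by combining (1) and (2), since $\ell(N^*)=\ell(N)$.

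For (4), minimality gives $\mu(\Omega^n M)=\beta_n(M)$, because the projective cover of $\Omega^nM$ is the next free module $R^{\beta_n}$. The earlier lemma says $\mu(N)=r(N^*)$. Combining this with (1) applied to $M^*$ gives $(\Omega^n M)^*=\Omega^{-n}(M^*)$, hence $r(\Omega^{-n}(M^*))=\mu(\Omega^nM)$. Note that the first equality as stated, without symmetry, needs the definition of $\Omega^{-n}$ via injective hulls. In that setting I argue directly: dualizing a minimal projective resolution of $M$ yields a minimal injective resolution of $M^*$ (in general $R^*$ is injective), so $\Omega^{-n}(M^*)=(\Omega^nM)^*$ holds without symmetry. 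The second chain then comes from applying the first to $M^*$ with $n-1$ and $\Omega^{-1}$ shifts. Concretely, $\mu(\Omega^{-n}M)=r(\Omega^{n}(M^*))$ by (1) and $\mu(N)=r(N^*)$. Also $r(\Omega^n(M^*))$ equals the socle of the submodule $\Omega^n(M^*)\subset R^{\beta_{n-1}(M^*)}$. When $n\ge1$ this is $\beta_{n-1}(M^*)$, because minimality puts $\Omega^n(M^*)\subseteq\mathfrak m R^{\beta_{n-1}}$ and it contains $\operatorname{soc}(R^{\beta_{n-1}})$. The socle of $R$ is simple, being the dual of the top of $R^*\cong R$, and it lies in every nonzero ideal. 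The remaining check is that no summand's socle is missed, which again follows from essentiality.
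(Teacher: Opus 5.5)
Your treatment of (1)–(3) and of $\mu(\Omega^n M)=r(\Omega^{-n}(M^*))=\beta_n(M)$ follows the paper: dualize minimal resolutions, take alternating sums of lengths, and use $\mu(N)=r(N^*)$. In two places you improve on it: the explicit essentiality check in (1), and the Matlis-duality remark that removes the symmetry hypothesis from the first chain in (4). The paper only asserts both points.

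\textbf{The gap: the last equality in (4).} The step that fails is $r(\Omega^n(M^*))=\beta_{n-1}(M^*)$. Let $G_{n-1}=R^{\beta_{n-1}(M^*)}$ be the $(n-1)$st term of the minimal resolution of $M^*$. You need $\operatorname{soc}(G_{n-1})\subseteq\Omega^n(M^*)$, and none of your reasons gives it.
\begin{enumerate}
\item That $\operatorname{soc}R$ lies in every nonzero ideal only controls submodules of a single copy of $R$.
\item The containment $\Omega^n(M^*)\subseteq\mathfrak m G_{n-1}$ says nothing about socles.
\item The inclusion $\Omega^n(M^*)\subseteq G_{n-1}$ is not one of the injective hulls whose essentiality you checked in (1). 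Dually, its essentiality says exactly that $G_{n-1}^*\to\Omega^{-n}(M)$ is a projective cover, which is the claim itself.
\end{enumerate}
The correct criterion is that the containment holds if and only if $\Omega^{n-1}(M^*)$ has no nonzero free summand. After a change of basis, any nonzero socle element of $G_{n-1}$ has the form $\sigma e$, with $e$ part of a basis and $\sigma$ spanning $\operatorname{soc}R$. If $\sigma e\notin\Omega^n(M^*)$, then $Re\cap\Omega^n(M^*)=0$. So $Re\cong R$ embeds into $G_{n-1}/\Omega^n(M^*)\cong\Omega^{n-1}(M^*)$, and it splits off because $R$ is injective.

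\textbf{Case $n\geq 2$.} Here the criterion holds automatically. A free summand of $\Omega^{n-1}(M^*)\subseteq\mathfrak m G_{n-2}$ would be an injective submodule, hence a direct summand of $G_{n-2}$ lying inside $\mathfrak m G_{n-2}$, contradicting Nakayama. The paper argues differently: the transpose of the minimal differential $G_{n-1}\to G_{n-2}$ still has entries in $\mathfrak m$, so $G_{n-2}^*\to G_{n-1}^*\to\Omega^{-n}(M)\to 0$ is a minimal presentation.

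\textbf{Case $n=1$.} Here your step fails, and so does the statement, whenever $M$ has a projective summand. For $M=R$ one gets $\mu(\Omega^{-1}(R))=r(\Omega(R))=0\neq 1=\beta_0(R^*)$. So the second chain in (4) needs either $n\geq 2$ or the hypothesis that $M$ has no projective summand. The paper's proof tacitly needs the same hypothesis, since for $n=1$ its ``minimal presentation'' reads $M\to G_0^*\to\Omega^{-1}(M)\to 0$. The hypothesis does hold in the paper's application $M=k$.
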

\begin{proof}
Let the minimal free resolution of $M$ over $R$ be
$$\cdots F_1 \to F_0 \to M \to 0,$$
then there is an exact sequence
$$0 \to \Omega^n(M)\to F_{n-1}\to\cdots F_1 \to F_0 \to M \to 0.$$
When $R$ is symmetric, finite projective modules and injective modules coincide and are preserved under $^*$. So, if $M^*$ has a minimal free resolution over $R$
$$\cdots G_1 \to G_0 \to M^* \to 0,$$
then $M$ has a minimal injective resolution
$$0 \to M \to G_0^* \to G_1^* \to \cdots,$$
which gives an exact sequence
$$0 \to M \to G_0^* \to G_1^* \to \cdots \to G_{n-1}^* \to \Omega^{-n}(M) \to 0$$
and hence there is an exact sequence
$$0 \to (\Omega^{-n}(M))^*\to G_{n-1}\to\cdots G_1 \to G_0 \to M^* \to 0.$$    
Hence we have $\Omega^{-n}(M)=(\Omega^n(M^*))^*$. This proves (1). Taking alternating sum of the above exact sequences yields (2) and (3). To prove $\mu(\Omega^n(M))=\beta_i(M)$, we observe that $F_{n+1} \to F_n \to \Omega^n(M) \to 0$ is a minimal presentation, so $\mu(\Omega^n(M))=\operatorname{rank}F_n=\beta_n(M).$ To prove $\mu(\Omega^{-n}(M))=\beta_{n-1}(M^*)$, observe that $G_{n-2}^* \to G_{n-1}^* \to \Omega^{-n}(M) \to 0$ is a minimal presentation, so $\mu(\Omega^{-n}(M))=\operatorname{rank}G_{n-1}^*=\beta_{n-1}(M^*).$ The other two equalities follow from (1) and the equality $\mu(N)=r(N^*)$.
\end{proof}
\begin{lemma}
Let $R$ be symmetric, $M$ be an $R$-module without free summand, then $M$ is indecomposable if and only if $\Omega(M)$ is indecomposable.   
\end{lemma}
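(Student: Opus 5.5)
The plan is to use the fact that $\Omega$ and $\Omega^{-1}$ commute with finite direct sums and are mutually inverse on modules without free summands. Throughout, $R$ is local and symmetric, as in the definition of $\Omega$. Then projective, free and injective $R$-modules all coincide: projective equals free because $R$ is local, and free equals injective because $R\cong R^*$ is injective.

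\textbf{Step 1: additivity and nonvanishing.} If $P_A\to A$ and $P_B\to B$ are projective covers, then $P_A\oplus P_B\to A\oplus B$ is a projective cover, because its rank is $\mu(A)+\mu(B)=\mu(A\oplus B)$. Hence $\Omega(A\oplus B)\cong\Omega(A)\oplus\Omega(B)$. Dually, the direct sum of injective hulls is an injective hull, so $\Omega^{-1}(A\oplus B)\cong\Omega^{-1}(A)\oplus\Omega^{-1}(B)$; alternatively this follows from \Cref{4.4}(1) together with additivity of $\Omega$ and of ${}^*$. Moreover, if $A\neq 0$ is not free then $\Omega(A)\neq 0$, and if $A\neq 0$ is not injective then $\Omega^{-1}(A)\neq 0$.

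\textbf{Step 2: ($\Rightarrow$ direction of the contrapositive).} Suppose $M=A\oplus B$ with $A,B\neq 0$. Both summands have no free summand, since $M$ has none. By Step 1, $\Omega(M)=\Omega(A)\oplus\Omega(B)$ with both pieces nonzero, so $\Omega(M)$ is decomposable. The degenerate case $M=0$ is excluded, since indecomposable modules are nonzero and $\Omega(0)=0$.

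\textbf{Step 3: $\Omega(M)$ has no free summand.} Take the minimal presentation $0\to\Omega(M)\to F\to M\to 0$ with $\operatorname{rank}F=\mu(M)$. Suppose $\Omega(M)=R'\oplus K$ with $R'\cong R$. Since $R'$ is injective, $R'$ is a direct summand of $F$. Then $M\cong (F/R')/\overline{K}$ is generated by $\operatorname{rank}F-1$ elements, which contradicts minimality.

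\textbf{Step 4: $\Omega^{-1}\Omega(M)\cong M$.} It suffices to show that $\Omega(M)\hookrightarrow F$ is an injective hull. Let $E$ be an injective hull of $\Omega(M)$ inside $F$. Then $E$ is a direct summand of $F$, say $F=E\oplus E'$. This gives $M\cong E/\Omega(M)\oplus E'$, and $E'$ is free. Since $M$ has no free summand, $E'=0$. Hence $\Omega^{-1}(\Omega(M))=F/\Omega(M)=M$.

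\textbf{Step 5: ($\Leftarrow$ direction of the contrapositive).} Suppose $\Omega(M)=A\oplus B$ with $A,B\neq0$. By Step 3, neither $A$ nor $B$ is free, hence neither is injective. By Steps 1 and 4, $M\cong\Omega^{-1}(A)\oplus\Omega^{-1}(B)$ with both summands nonzero, so $M$ is decomposable.

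The only delicate points are Steps 3 and 4. These are where we need that injective submodules split off and that free modules are injective, which is exactly the symmetric hypothesis. The rest is formal.
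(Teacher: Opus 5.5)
Your proposal is correct and follows the same route as the paper. Additivity of $\Omega$ turns a decomposition of $M$ into one of $\Omega(M)$, and the identity $M\cong\Omega^{-1}(\Omega(M))$ together with additivity of $\Omega^{-1}$ (which the paper gets from $\Omega^{-1}(N)=\Omega(N^*)^*$) gives the converse. Your Steps 1, 3 and 4 supply details the paper leaves implicit: that $\Omega^{\pm1}$ does not kill the summands, that $\Omega(M)$ has no free summand, and that $\Omega(M)\hookrightarrow F$ is an injective hull.
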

\begin{proof}
$\Omega$ commutes with direct sums, so $M$ is decomposable implies $\Omega(M)$ is decomposable. If $M$ has no free summands, then $M=\Omega^{-1}(\Omega(M))=\Omega(\Omega(M)^*)^*$, so $\Omega(M)$ is decomposable implies $M$ is decomposable.
\end{proof}
\subsection{Tate's resolution and length computation}

In this subsection, we compute some lengths concerning $\Omega^n(k)$. From now on till the end of this section, we assume the following: $R=kG$ where $G$ is a finite abelian $p$-group, and $k$ is a field of characteristic $p>0$. We assume $G=(\mathbb{Z}/p^{e_1})g_1\times (\mathbb{Z}/p^{e_2})g_2\times\ldots\times(\mathbb{Z}/p^{e_r})g_r$ for some $e_1,e_2,\ldots,e_r,r\geq 1$. Here $r$ is the number of generators of $G$. We see $R=k[g_1,\ldots,g_r]/(g_1^{p^{e_1}}-1,\ldots,g_r^{p^{e_r}}-1)=k[t_1,\ldots,t_r]/(t_1^{p^{e_1}},\ldots,t_r^{p^{e_r}})$ is a complete intersection ring. It is local with maximal ideal $\mathfrak{m}=(t_1,\ldots,t_r)$ and residue field $k$. In this case, every finite $R$-module $M$ satisfies $\ell(M)=\dim_kM$, so we will use these two notions interchangably. Let $I$ be an ideal $(f_1,\ldots,f_r)$ in $\mathfrak{m}$ generated by a regular sequence in $k[t_1,\ldots,t_r]$ such that $t_i^{p^{e_i}}$ lies in the preimage of $\mathfrak{m}I$. Then the minimal resolution of $R/I$ over $R$ is Tate's resolution, see \cite{Tate}. We have
$$\operatorname{Tor}^*_R(R/I,k)\cong k\langle y_1,y_2,\ldots,y_r\rangle[x_1^{(i)},x_2^{(i)},\ldots,x_r^{(i)}|i\geq 1],$$
where $x_1^{(i)}$ refers to the divided power. The homological degree of $y_j$ is $1$ and $x_j^{(i)}$ is $2i$. Since $k$ has characteristic $p$, we can also write
$$\operatorname{Tor}^*_R(R/I,k)\cong k\langle y_1,y_2,\ldots,y_r\rangle[x_1^{(p^e)},x_2^{(p^e)},\ldots,x_r^{(p^e)}|e\geq 0]$$
with $(x_j^{(p^e)})^p=0$. In particular, the length of graded pieces of $\operatorname{Tor}^*_R(R/I,k)$ is the same as that of $k\langle y_1,y_2,\ldots,y_r\rangle[x_1,\ldots,x_2,\ldots,x_r]$ where $x_i$'s are commuting free variables. We denote
$$\beta_M(z)=\sum_{i \geq 0}\beta^R_i(M)z^i,\gamma_M(z)=\sum_{i \geq 0}\gamma^R_i(M)z^i.$$
Then
$$\beta_{R/I}(z)=(1+z)^r(1-z^2)^{-r}=(1-z)^{-r}$$
and
$$\gamma_{R/I}(z)=\beta_{R/I}(z)(1+z)^{-1}=(1-z)^{-r}(1+z)^{-1},$$
so we have
$$\beta_i(R/I)={r+i-1\choose i},\gamma_i(R/I)=\sum_{0 \leq j \leq i}(-1)^{i-j}{r+j-1\choose j}.$$
In particular, the above result holds for $k=R/(t_1,\ldots,t_r)$. \Cref{4.6} deals with the asymptotic behavior of $\beta_i,\gamma_i$ in terms of $i$.
\begin{proposition}\label[proposition]{4.6}
We fix $r \geq 2$.
\begin{enumerate}
\item $\beta_i$ is a polynomial in $i$ with leading term $\frac{1}{(r-1)!}i^{r-1}$ for $i \geq 0$.
\item $\gamma_i$ is a quasi-polynomial of period $2$ for $i>r+1$. That is, there are two polynomials $P_1,P_2$ such that $\gamma_i=P_1(i)$ for odd $i>r+1$ and $\gamma_i=P_2(i)$ for even $i>r+1$.
\item We have
$$\lim_{i \to \infty}\frac{\gamma_i}{i^{r-1}}=\frac{1}{2(r-1)!}.$$
Therefore, $P_1,P_2$ are two polynomials with the same leading term $\frac{1}{2(r-1)!}i^{r-1}$. In particular, $\gamma_i=\frac{1}{2(r-1)!}i^{r-1}+O(i^{r-2})$.
\end{enumerate}    
\end{proposition}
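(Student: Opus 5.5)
The plan is to work directly with the generating functions computed just before the statement, namely $\beta_{R/I}(z)=(1-z)^{-r}$ and $\gamma_{R/I}(z)=(1-z)^{-r}(1+z)^{-1}$, and to extract the asymptotics of the coefficients by partial fractions.

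For (1), the closed form $\beta_i={r+i-1\choose i}={i+r-1\choose r-1}=\frac{(i+1)(i+2)\cdots(i+r-1)}{(r-1)!}$ is visibly a polynomial in $i$ of degree $r-1$. Its leading term is $\frac{1}{(r-1)!}i^{r-1}$, and the formula holds for all $i\geq 0$.

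For (2), I will decompose the rational function $(1-z)^{-r}(1+z)^{-1}$ into partial fractions:
$$\frac{1}{(1-z)^r(1+z)}=\sum_{j=1}^{r}\frac{A_j}{(1-z)^j}+\frac{B}{1+z},$$
with constants $A_j,B\in\mathbb{Q}$.
\begin{itemize}
\item The coefficient of $z^i$ in $(1-z)^{-j}$ is ${i+j-1\choose j-1}$, which is a polynomial in $i$ for all $i\geq 0$.
\item The coefficient of $z^i$ in $(1+z)^{-1}$ is $(-1)^i$.
\end{itemize}
Hence $\gamma_i=Q(i)+B(-1)^i$ for a single polynomial $Q$. Setting $P_1=Q-B$ and $P_2=Q+B$ gives a quasi-polynomial of period $2$. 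In fact this holds for every $i\geq 0$, so in particular for $i>r+1$. Since the numerator has degree $0<r+1$, there is no polynomial part in the decomposition, so no exceptional small $i$ occur.

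For (3), only the top term $A_r(1-z)^{-r}$ contributes to degree $r-1$. Its coefficient is $A_r{i+r-1\choose r-1}=\frac{A_r}{(r-1)!}i^{r-1}+O(i^{r-2})$. The lower terms are $O(i^{r-2})$ and $B(-1)^i=O(1)$, and $O(1)\subseteq O(i^{r-2})$ because $r\geq 2$. To compute $A_r$, multiply by $(1-z)^r$ and set $z=1$, which gives $A_r=\frac{1}{1+1}=\frac12$. Therefore $\gamma_i=\frac{1}{2(r-1)!}i^{r-1}+O(i^{r-2})$, the limit is $\frac{1}{2(r-1)!}$, and $P_1,P_2$ share this leading term.

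An alternative, more elementary route for (3) uses the recursion $\gamma_i+\gamma_{i-1}=\beta_i$. Since both $\gamma_i$ and $\gamma_{i-1}$ grow like the same polynomial, each is asymptotically half of $\beta_i$.

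There is no real obstacle. The only point needing care is that the $(-1)^i$ term is genuinely lower order, and this uses the hypothesis $r\geq 2$.
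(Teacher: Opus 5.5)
Your proof is correct and more self-contained than the paper's. The paper treats (1) as straightforward. For (2) it rewrites $\gamma_{R/I}(z)$ as $(f_1(z^2)+zf_2(z^2))/(1-z^2)^{r+1}$, with numerator of degree $r+1$, and reads off the even and odd coefficient sequences separately. For (3) it simply cites \cite[Lemma 2.2]{arxivLech} for the limit.

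You instead decompose $(1-z)^{-r}(1+z)^{-1}$ into partial fractions over the coprime factors $(1-z)^r$ and $1+z$. This buys you three things:
\begin{enumerate}
\item The sharper structural fact $\gamma_i=Q(i)+B(-1)^i$ for all $i\geq 0$, with $B=2^{-r}$. So $P_1-P_2=-2B$ is constant, and the range $i>r+1$ improves to $i\geq 0$.
\item The leading constant from the single evaluation $A_r=(1+z)^{-1}|_{z=1}=\tfrac12$.
\item The error term $O(i^{r-2})$, with no appeal to an outside result.
\end{enumerate}
The paper's version is shorter on the page but outsources the actual asymptotic.

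Your closing alternative via $\gamma_i+\gamma_{i-1}=\beta_i$ is only heuristic as written, because it presupposes $\gamma_i\sim\gamma_{i-1}$. It becomes rigorous once combined with (2). The two parity instances of the recursion hold for infinitely many $i$, so they give the polynomial identity $(P_1-P_2)(x)=(P_1-P_2)(x-1)$. This forces $P_1-P_2$ to be constant, and then $P_1(x)+P_1(x-1)$ equals $\binom{x+r-1}{r-1}$ up to a constant. Your main argument does not need this.
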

\begin{proof}
(1) is straightforward. (2) comes from the fact that $\gamma_{R/I}(z)=(f_1(z^2)+f_2(z^2)z)/(1-z^2)^{r+1}$ where $\deg_z(f_1(z^2)+f_2(z^2)z)=r+1$. (3) is a result by \cite[Lemma 2.2]{arxivLech}.    
\end{proof}
\begin{corollary}\label[corollary]{4.7}
For $n \in \mathbb{Z}$ and $r \geq 2$, $\ell(\Omega^n(k))=\frac{\ell(R)}{2(r-1)!}|n|^{r-1}+O(|n|^{r-2})$. It is a quasi-polynomial of period 2 for $|n|\gg 0$.   
\end{corollary}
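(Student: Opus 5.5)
We combine \Cref{4.4} with \Cref{4.6}, treating $n\ge 0$ and $n<0$ separately, and use throughout that $k^*\cong k$ as $R$-modules.

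For $n\ge 1$, \Cref{4.4}(2) with $M=k$ gives
$$\ell(\Omega^n(k))=\gamma_{n-1}(k)\,\ell(R)+(-1)^n.$$
The minimal resolution of $k=R/(t_1,\ldots,t_r)$ is Tate's resolution, so $\gamma_i(k)=\gamma_i$ is the quantity studied in \Cref{4.6}.

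For $n=-m<0$, $R=kG$ is a group algebra and hence symmetric. \Cref{4.4}(3) then gives
$$\ell(\Omega^{-m}(k))=\gamma_{m-1}(k^*)\,\ell(R)+(-1)^m\ell(k^*)=\gamma_{m-1}\,\ell(R)+(-1)^m.$$
So in both cases $\ell(\Omega^n(k))=\ell(R)\,\gamma_{|n|-1}+(-1)^{n}$, and the case $n=0$ is trivial.

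By \Cref{4.6}(3), $\gamma_{|n|-1}=\frac{1}{2(r-1)!}(|n|-1)^{r-1}+O(|n|^{r-2})$. Expanding $(|n|-1)^{r-1}=|n|^{r-1}+O(|n|^{r-2})$ gives $\frac{1}{2(r-1)!}|n|^{r-1}+O(|n|^{r-2})$. Multiplying by $\ell(R)$, and noting that the bounded term $(-1)^n$ is $O(1)\subseteq O(|n|^{r-2})$ since $r\ge 2$, yields the asymptotic formula.

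For quasi-polynomiality, \Cref{4.6}(2) says $\gamma_{|n|-1}$ agrees with $P_1(|n|-1)$ or $P_2(|n|-1)$ according to the parity of $|n|-1$, once $|n|-1>r+1$. The term $(-1)^n$ is also determined by parity, and $|n|$ has the same parity as $n$. Hence on each residue class of $n$ modulo $2$, and separately for $n\gg0$ and for $n\ll0$, $\ell(\Omega^n(k))$ is a polynomial in $|n|$, which is what the statement means by quasi-polynomial of period $2$ for $|n|\gg0$.

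There is no real obstacle here. The only points needing care are the identification $k^*\cong k$, so that the cosyzygy Betti data coincide with those of $k$, and absorbing the index shift $|n|-1$ and the $(-1)^n$ term into the error term.
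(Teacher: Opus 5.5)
Your proof is correct and is essentially the paper's argument: the case $n\ge 0$ follows from \Cref{4.4}(2) together with \Cref{4.6}, and the case $n<0$ is reduced to it using $k^*\cong k$. The only cosmetic difference is that you invoke \Cref{4.4}(3), whereas the paper uses the duality $\Omega^{n}(k)=(\Omega^{-n}(k^*))^*$ from \Cref{4.4}(1) to get $\ell(\Omega^n(k))=\ell(\Omega^{-n}(k))$; part (3) is just the length form of part (1).
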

\begin{proof}
For $n \geq 0$ this is a consequence of \Cref{4.4} and \Cref{4.6}; for $n<0$ we have $\ell(\Omega^n(k))=\ell(\Omega^{-n}(k^*))=\ell(\Omega^{-n}(k))$.    
\end{proof}
Now we study the direct sum decomposition of tensor products of $\Omega^n(k)$. Recall that the stable module category over a ring $R$ is the category whose objects are the same as module category, and whose homomorphisms are the group of $R$-linear maps modulo those maps factorizing through a projective object. If $M,N$ are two modules, then $M=N$ in the stable module category if and only if $M\oplus F=N\oplus G$ for some free modules $F,G$.
\begin{proposition}\label[proposition]{4.8}
In the stable module category, we have:
\begin{enumerate}
\item $M\otimes_kN,\Omega(M),\Omega^{-1}(M)$ are well-defined.
\item $\Omega^n(M)=M\otimes_k \Omega^n(k)$.
\item $\Omega^n(k)\otimes_k \Omega^m(k)=\Omega^{n+m}(k)$.
\item $\Omega^m(M)\otimes_k\Omega^n(N)=\Omega^{n+m}(M\otimes_kN)$.
\end{enumerate}    
\end{proposition}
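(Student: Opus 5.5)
The plan is to reduce everything to two facts about group algebras of finite groups: $kG$ is self-injective (indeed symmetric), so projective and injective modules coincide; and tensoring any module with a projective module over $k$ with diagonal action yields a projective module. With these in hand, each item becomes a short formal argument.

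\textbf{Item (1).} Projective covers and injective hulls are unique up to isomorphism, so $\Omega^{\pm1}(M)$ is determined up to isomorphism. If $M\oplus F\cong N\oplus F'$ with $F,F'$ free, then $\Omega(M\oplus F)=\Omega(M)$, because $\Omega$ commutes with direct sums and kills projectives. Hence $\Omega(M)\cong\Omega(N)$, and the same argument applies to $\Omega^{-1}$. For the tensor product, $(M\oplus F)\otimes_k N=M\otimes_kN\oplus F\otimes_kN$, and $kG\otimes_k N\cong kG^{\dim N}$ via the standard untwisting isomorphism $g\otimes n\mapsto g\otimes g^{-1}n$. Therefore $F\otimes_kN$ is free, and $M\otimes_k N$ is well-defined stably.

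\textbf{Item (2).} Tensor the short exact sequence $0\to\Omega(k)\to P\to k\to0$ with $M$ over $k$; it stays exact because $k$ is a field. This gives $0\to M\otimes\Omega(k)\to P\otimes M\to M\to0$ with $P\otimes M$ projective. By Schanuel's lemma, any kernel of a surjection from a projective onto $M$ agrees with $\Omega(M)$ up to projective summands, so $M\otimes\Omega(k)=\Omega(M)$ stably. The cosyzygy case is the dual argument, using $0\to k\to I\to\Omega^{-1}(k)\to0$ and the fact that injectives are projective (symmetric algebra), so $M\otimes I$ is injective. Iterating $n$ times, and using (1) so that $\Omega$ is well-defined on stable classes, gives $\Omega^n(M)=\Omega(\Omega^{n-1}(k)\otimes M)$. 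By the one-step case this equals $\Omega^{n-1}(k)\otimes M\otimes\Omega(k)$, and an induction combined with associativity and commutativity of $\otimes_k$ finishes.

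\textbf{Item (3).} The main point is that $\Omega$ and $\Omega^{-1}$ are mutually inverse on stable classes. This holds because over a self-injective algebra a module with no projective summand satisfies $\Omega^{-1}\Omega M\cong M$, as the lemma above uses. Given this, (2) applied with $M=\Omega^m(k)$ gives $\Omega^n(k)\otimes\Omega^m(k)=\Omega^n(\Omega^m(k))$. For $m,n$ of the same sign this is $\Omega^{n+m}(k)$ directly. For mixed signs, cancel using the inverse property.

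\textbf{Item (4).} Using (2) twice and (3),
$$\Omega^m(M)\otimes\Omega^n(N)=M\otimes\Omega^m(k)\otimes N\otimes\Omega^n(k)=(M\otimes N)\otimes\Omega^{n+m}(k),$$
and a final application of (2) identifies the right-hand side with $\Omega^{n+m}(M\otimes N)$.

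The only slightly delicate step is the mixed-sign bookkeeping in (3). It is handled entirely by the invertibility of $\Omega$ on the stable category.
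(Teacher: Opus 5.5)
Your proposal is correct and follows essentially the same route as the paper: tensor the defining short exact sequences for syzygies or cosyzygies of $k$ with $M$, compare with the corresponding sequences for $M$ via Schanuel's lemma, induct on $n$, and then take $M=\Omega^m(k)$ and use commutativity of $\otimes_k$ for (3) and (4). The one thing you add is making explicit that $\Omega$ and $\Omega^{-1}$ are mutually inverse on stable classes. The paper's one-line proof of (3) uses this implicitly whenever $m$ and $n$ have opposite signs.
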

\begin{proof}
(1) $M\otimes_kN$ is well-defined since $N$ is projective implies $M\otimes_kN$ is projective for any $M$. If $M$ is projective, $\Omega(M)=\Omega^{-1}(M)=0$ is projective, so $\Omega,\Omega^{-1}$ is well-defined.

(2) There are two exact sequences
$$0 \to M\otimes_k \Omega^n(k) \to M\otimes_kF \to M\otimes_k\Omega^{n-1}(k) \to 0$$
and
$$0 \to \Omega^n(M) \to G \to \Omega^{n-1}(M) \to 0,$$
where $M\otimes_kF,G$ are free. Apply Schanuel's Lemma and induct on $n$.

(3) This is the special case of (2) when $M=\Omega^{m}(k)$.

(4) This is true by (2) and (3) and commutativity of $\otimes$.
\end{proof}
We construct two rings. The first ring is $\Lambda$ whose elements correspond to $R$-modules and multiplication corresponds to the tensor product as follows. The second ring is an overring $\Lambda'$ whose ring structure is simpler.
\begin{definition}
\begin{enumerate}
\item Let $\Lambda$ be the free abelian group with basis $v_n=[\Omega^n(k)],n \in \mathbb{Z}$ and $f=[R]$ where $[\Omega^n(k)]$ and $[R]$ are symbols.
\item For modules of the form $M=\oplus_{n \in \mathbb{Z}}(\Omega^n(k))^{a_n}\oplus R^b$, denote $[M]=\sum_{n \in \mathbb{Z}}a_nv_n+bf$.
\item Define a multiplication structure on $\Lambda$ such that $[M][N]=[M\otimes_kN]$; $\Lambda$ is closed under product by \Cref{4.8}. This makes $\Lambda$ a commutative ring with identity $v_0=[k]$.
\item Let $D=|G|=\dim_kR=p^{e_1+e_2+\ldots+e_r}$, and let $\Lambda' \supseteq \Lambda$ be the free abelian group with basis $v_n,n \in \mathbb{Z}$ and $\frac{1}{D}f$.
\item The dimension function $\dim:\Lambda' \to \mathbb{Z}$ is a $\mathbb{Z}$-linear function sending $v_n$ to $\dim_k \Omega^n(k)$ and sending $f$ to $D$. Set $\bar{v_i}=v_i-\dim(v_i)/D\cdot f \in \Lambda'$.   
\end{enumerate} 
\end{definition}
Now we explore the ring structure of $\Lambda'$. We see for any $\lambda \in \Lambda'$, $\lambda\cdot f=(\dim \lambda)f$. Therefore, $\frac{1}{D}f$ is an idempotent of $\Lambda'$ and we have a ring isomorphism
$$\Lambda' \cong \Lambda_1'\times \Lambda_2'=(v_0-\frac{1}{D}f)\Lambda'\times \frac{1}{D}f\Lambda'$$
which sends $\lambda$ to $((v_0-\frac{1}{D}f)\lambda,\frac{\dim \lambda}{D}f)$. Direct computation shows that $\bar{v_i}=(v_0-\frac{1}{D}f)v_i$, so $\Lambda_1'=\oplus_{i \in \mathbb{Z}}\mathbb{Z}\bar{v_i}$ and $\Lambda_2'=\mathbb{Z}(\frac{1}{D}f)$. \Cref{4.8} says $\bar{v_i}\bar{v_j}=\overline{v_{i+j}}+cf$ for some $c \in \frac{1}{D}f$; by the ring structure the $\Lambda'_2$-component of $\bar{v_i}\bar{v_j}$ must vanish, so $c=0$ and $\bar{v_i}\bar{v_j}=\overline{v_{i+j}}$. Therefore, there is an isomorphism of rings
$$\Lambda' \to \mathbb{Z}[t,t^{-1}]\times \mathbb{Z}$$
sending $\bar{v_i}$ to $t^i$ and $\frac{1}{D}f$ to $1$ in the second copy of $\mathbb{Z}$.
\subsection{Core of a module and the asymptotic behavior of an example}
We recall the following definitions for modular representations of a finite group, which are studied in \cite{BensonBanach} and \cite{BensonGamma}.
\begin{definition}
Let $G$ be a finite group. Assume all representations are over a field $k$.
\begin{enumerate}
\item Let $M$ be a $G$-representation, then there is a decomposition $M=M_1\oplus M_2$ where $M_2$ is projective and $M_1$ has no projective summands. We say $M_1$ is the core of $M$, denoted by $\operatorname{core}(M)$.
\item For $n\geq 1$, we denote $c^G_n(M)=\dim_k \operatorname{core}(M^{\otimes n})$ and $\gamma_G(M)=\lim_{n \to \infty}\sqrt[n]{c^G_n(M)}$.
\end{enumerate}    
When $n=0$, we adopt the convention that $M^{\otimes 0}=k$ and $c^G_n(M)=1$.
\end{definition}
We use the notation $f(n)\sim g(n)$, which stands for $\lim_{n \to \infty}\frac{f(n)}{g(n)}=1$. According to the definition, $\log c^G_n(M) \sim n\log\gamma_G(M)$. The invariant $\gamma_G(M)$ is introduced in \cite{BensonGamma}, where the authors prove its existence and study many of its properties. A next thing to study is the asymptotic behavior of $c^G_n(M)/\gamma_G(M)^n$. We start with two examples. 
\begin{example}\label[example]{4.11}
Let $G=V_4=\mathbb{Z}/2\times\mathbb{Z}/2$. Then the syzygies and cosyzygies of $k$ satisfy $\dim \Omega^n(k)=2|n|+1$. Take $M=\Omega^1(k)\oplus\Omega^{-1}(k)$. Since $\Omega^m(k)\otimes\Omega^n(k)=\Omega^{m+n}(k)\oplus (\textup{projective})$, we see
$$\operatorname{core}(M^{\otimes n})=\oplus_{0 \leq i \leq n} (\Omega^{n-2i}(k))^{{n\choose i}}.$$
Thus
$$c^G_n(M)=\sum_{0 \leq i \leq n}{n\choose i}(|2n-4i|+1)=2^n+2\sum_{0 \leq i \leq n}{n\choose i}|n-2i|.$$

Case 1: $n=2m$. Then $c^G_n(M)=2^n+8\sum_{0 \leq i \leq m-1}{2m\choose i}(m-i)$. We set
$$A=\sum_{0 \leq i \leq m-1}{2m\choose i},B=\sum_{0 \leq i \leq m-1}i{2m\choose i}=\sum_{0 \leq i \leq m-1}2m{2m-1\choose i-1},$$
then
$$A=\frac{1}{2}(2^{2m}-{2m\choose m}),B=2m(\frac{1}{2}2^{2m-1}-{2m-1\choose m-1}),$$
$$c^G_n(M)=2^n+8(mA-B)=2^n+4m{2m\choose m}=2^n+2n{n\choose n/2}.$$

Case 2: $n=2m+1$. Then $c^G_n(M)=2^n+4\sum_{0 \leq i \leq m}{2m+1\choose i}(2m+1-2i)$. We set
$$A=\sum_{0 \leq i \leq m}{2m+1\choose i},B=\sum_{0 \leq i \leq m}i{2m+1\choose i}=\sum_{0 \leq i \leq m-1}(2m+1){2m\choose i-1},$$
then
$$A=2^{2m},B=(2m+1)\frac{2^{2m}-{2m\choose m}}{2},$$
$$c^G_n(M)=2^n+4((2m+1)A-2B)=2^n+4(2m+1){2m\choose m}=2^n+4n{n-1\choose (n-1)/2}.$$

We apply Stirling's formula $n!\sim \sqrt{2\pi n}(\frac{n}{e})^n$ to get
$$2n{n\choose n/2}\sim \sqrt{\frac{8n}{\pi}}2^n\sim4n{n-1\choose (n-1)/2}.$$
This example reveals the following behavior:
\begin{enumerate}
\item $\gamma_G(M)=2$, which is the number of direct summands of $M$;
\item $c^G_n(M)\sim\gamma_G(M)^n\cdot cn^{1/2}$ for some number $c$.
\end{enumerate}
\end{example}
\begin{example}\label[example]{4.12}
Let $G=V_4$, $M=k\oplus \Omega^1(k)$. Then  
$$\operatorname{core}(M^{\otimes n})=\oplus_{0 \leq i \leq n} (\Omega^{i}(k))^{{n\choose i}},$$
and
$$c^G_n(M)=\sum_{0 \leq i \leq n}{n\choose i}(2i+1)=2^n+2n\sum_{0 \leq i \leq n}{n-1\choose i-1}=(n+1)2^n\sim n2^n.$$
In this example $\gamma_G(M)=2$ and $c^G_n(M)\sim\gamma_G(M)^n\cdot cn$ for some $c$.
\end{example}
The above two examples indicate that in certain examples, $c^G_n(M)/\gamma_G(M)^n$ may asymptotically look like $cn^\alpha$ where $\alpha \in \frac{1}{2}\mathbb{Z}$. In the next subsection, we will prove that this phenomenon happens to direct sums of syzygies and cosyzygies of $k$ and express $c,\alpha$ in terms of data of the syzygy or cosyzygy and data of the underlying group.
\subsection{Limit behavior of core of tensor power of syzygies}
In this subsection, we will apply the language of probability theory to derive some asymptotic behavior of $c^G_n(M)$. In general, a random variable $X$ is a measurable function over certain measure space; in our application the space is $\mathbb{R}^n$ for some $n$ and the measure is a Borel measure. For other notations, such as the expectation, mean, variance and probability density function, we refer to standard textbooks in probability theory like \cite{Durrett}.

We denote $\Lambda_0=\Lambda_1\otimes_\mathbb{Z}\mathbb{R}=\oplus_{i \in \mathbb{Z}} \mathbb{R}v_i$, which is an $\mathbb{R}$-vector space over a basis indexed by $\mathbb{Z}$. $\Lambda_0$ is an $\mathbb{R}$-algebra whose multiplication is given by $v_iv_j=v_{i+j}$ and $\mathbb{R}$-linearity. The subset of $\Lambda_0$ of elements with nonnegative components is $\Lambda_0^+=\oplus_{i \in \mathbb{Z}} \mathbb{R}_{\geq 0}v_i$. For $\lambda=\lambda_iv_i \in \Lambda_0$, denote $\|\lambda\|=\sum_{i \in \mathbb{Z}} |\lambda_i|$. We say $\lambda$ is normalized if $\|\lambda\|=1$. In general, we can normalize a nonzero $\lambda$: $\lambda=\|\lambda\|\cdot \lambda^{nor}$ such that $\|\lambda^{nor}\|=1$. If $\|\lambda\|=1$ and $\lambda \in \Lambda_0^+$, then $\lambda$ corresponds to a discrete probablity measure. We denote the corresponding random variable by $X_\lambda$, where $P(X_\lambda=i)=\lambda_i$.

\begin{lemma}
Let $\lambda_1,\lambda_2\in \Lambda_0^+$ be two normalized elements. Then $X_{\lambda_1\lambda_2}=X_{\lambda_1}+X_{\lambda_2}$ where $X_{\lambda_1},X_{\lambda_2}$ are independent random variables. Consequently, if $\lambda \in \Lambda_0^+$ corresponds to a random variable $X$, then $X_{\lambda^n}=Y_n=X_1+\ldots+X_n$ which is the sum of $n$ independent variable with same probability distribution as $X$.
\end{lemma}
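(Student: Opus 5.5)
The plan is to compute coefficients of the product directly and recognise the convolution formula for the law of a sum of independent random variables. Write $\lambda_1=\sum_i a_iv_i$ and $\lambda_2=\sum_j b_jv_j$ with $a_i,b_j\geq 0$ and $\sum_i a_i=\sum_j b_j=1$.

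First I check that the product is again normalized and nonnegative. By the multiplication rule $v_iv_j=v_{i+j}$ in $\Lambda_0$,
$$\lambda_1\lambda_2=\sum_{m\in\mathbb{Z}}\Big(\sum_{i+j=m}a_ib_j\Big)v_m.$$
Each inner sum is finite because $\lambda_1,\lambda_2$ are finite sums, so all coefficients are nonnegative. Moreover $\|\lambda_1\lambda_2\|=\sum_m\sum_{i+j=m}a_ib_j=\big(\sum_i a_i\big)\big(\sum_j b_j\big)=1$. Hence $\lambda_1\lambda_2\in\Lambda_0^+$ is normalized, and $X_{\lambda_1\lambda_2}$ is defined.

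Next I compare with the sum of independent variables. Take $X_{\lambda_1},X_{\lambda_2}$ independent on a product space. Then
$$P(X_{\lambda_1}+X_{\lambda_2}=m)=\sum_{i+j=m}P(X_{\lambda_1}=i)P(X_{\lambda_2}=j)=\sum_{i+j=m}a_ib_j.$$
This is exactly the coefficient of $v_m$ in $\lambda_1\lambda_2$. Two integer-valued random variables with the same point masses have the same distribution, so $X_{\lambda_1\lambda_2}$ and $X_{\lambda_1}+X_{\lambda_2}$ agree in distribution, which is the sense of the equality.

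For the consequence, I induct on $n$. The case $n=1$ is trivial. For the step, apply the first part to $\lambda^{n-1}$ and $\lambda$; both are normalized elements of $\Lambda_0^+$ by what was just shown. This gives $X_{\lambda^n}=X_{\lambda^{n-1}}+X_n$ with $X_n$ independent of $X_{\lambda^{n-1}}$. By the induction hypothesis, $X_{\lambda^{n-1}}$ has the law of $X_1+\dots+X_{n-1}$ with the $X_i$ i.i.d.\ copies of $X$, which can be realised on a product space independent of $X_n$. So $X_{\lambda^n}$ has the law of $Y_n$.

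There is no real obstacle here. The only point requiring care is interpreting ``$=$'' as equality in distribution and realising the independent copies on a product probability space.
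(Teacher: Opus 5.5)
Your proposal is correct and follows essentially the same route as the paper: expand $\lambda_1\lambda_2$ via $v_iv_j=v_{i+j}$ and use independence to identify each coefficient $\sum_{i+j=m}a_ib_j$ with $P(X_{\lambda_1}+X_{\lambda_2}=m)$. You also supply two details the paper leaves implicit: that the product of normalized elements of $\Lambda_0^+$ is again normalized and nonnegative, and the induction on $n$ (with equality understood in distribution) for the $n$-fold statement.
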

\begin{proof}
We write $\lambda_1=\sum_i \lambda_{1,i}v_i$ and $\lambda_2=\sum_i \lambda_{2,i}v_i$, then $\lambda_1\lambda_2=\sum_i (\sum_j \lambda_{1,j}\lambda_{2,i-j})v_i$. So
\begin{align*}
P(X_{\lambda_1}+X_{\lambda_2}=i)=\sum_jP(X_{\lambda_1}=j)P(X_{\lambda_2}=i-j|X_{\lambda_1}=j)\\
=\sum_jP(X_{\lambda_1}=j)P(X_{\lambda_2}=i-j)=\sum_j \lambda_{1,j}\lambda_{2,i-j}=P(X_{\lambda_1\lambda_2}=i).    
\end{align*}
Here the second equality uses independence of $X_{\lambda_1},X_{\lambda_2}$.  
\end{proof}

Now assume $\lambda \in \Lambda_0^+$ is normalized, $X_\lambda$ has mean $\mu$ and variance $\sigma^2$. We set $X_{\lambda^n}=Y_n=X_1+\ldots+X_n$ where $X_i$'s are independent and have the same distribution as $X_\lambda$. The central limit theorem says:
$$Z_n=\frac{Y_n-n\mu}{\sigma\sqrt{n}}\to Z \sim N(0,1).$$
In other words, for any $\alpha<\beta$, the sum of coefficients of $v_i$ in $\lambda^n$ for $i \in [n\mu+\sigma\sqrt{n}\alpha,n\mu+\sigma\sqrt{n}\beta]$ is approximately $\int_\alpha^\beta f(x)dx$ where $f(x)=\frac{1}{\sqrt{2\pi}}e^{-\frac{x^2}{2}}$ is the probability density function of $N(0,1)$. Let $Z\sim N(0,1)$ be a random variable. If $\alpha\geq 0$, we have 
$$E(|Z|^\alpha)=\frac{2}{\sqrt{2\pi}}\int_0^\infty x^\alpha e^{-x^2/2}dx=\frac{2\cdot 2^{(\alpha-1)/2}}{\sqrt{2\pi}}\int_0^\infty u^{(\alpha-1)/2}e^{-u}du=2^{\alpha/2}\Gamma(\frac{\alpha+1}{2})/\sqrt{\pi}.$$

Now let $L$ be an $\mathbb{R}$-linear function $\Lambda_0 \to \mathbb{R}$. Let $F:\mathbb{Z} \to \mathbb{R},F(i)=\ell(v_i)$. Then $\ell(\lambda^n)=E(F(Y_n))$.
\begin{lemma}
Let $L:\Lambda_0 \to \mathbb{R}$ be an 
$\mathbb{R}$-linear map, $\lambda, X_\lambda, Y_n,\mu,\sigma$ are as above, $F:\mathbb{Z} \to \mathbb{R}$ be a map such that $\ell(v_i)=F(i)$, $Z \sim N(0,1)$. Assume $F(i)=C|i|^\alpha+O(|i|^{\alpha-1})$ where $\alpha \geq 1$ is a real number and $C \neq 0$ is a constant.
\begin{enumerate}
\item If $\mu\neq 0$, then
$$\lim_{n \to \infty}\frac{E(F(Y_n))}{n^{\alpha}}=C|\mu|^\alpha.$$
In other words, $\ell(\lambda^n)\sim C|\mu|^\alpha n^{\alpha}$.
\item If $\mu=0$, then
$$\lim_{n \to \infty}\frac{E(F(Y_n))}{n^{\alpha/2}}=C\sigma^{\alpha}E(|Z|^\alpha).$$
In other words, if $\sigma \neq 0$, $\ell(\lambda^n)\sim n^{\alpha/2}\cdot C\sigma^{\alpha}E(|Z|^\alpha)$.
\end{enumerate}
\end{lemma}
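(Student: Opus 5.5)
Note that $\lambda$ has finite support (it is a finite combination of the $v_i$), so $X_\lambda$ is bounded, say $|X_\lambda|\le K$. Then $|Y_n|\le Kn$, and every moment $E|Y_n|^\beta$ is finite. Write $F(i)=C|i|^\alpha+G(i)$ with $|G(i)|\le C'(|i|^{\alpha-1}+1)$; the $+1$ absorbs small $|i|$, including $i=0$. Then
$$E(F(Y_n))=C\,E|Y_n|^\alpha+E(G(Y_n)).$$
In both cases the plan is to show that $E|Y_n|^\alpha$, suitably normalized, converges to the stated limit, and that the error term $E(G(Y_n))$ is of strictly smaller order.

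\textbf{Case $\mu\neq0$.} Put $W_n=Y_n/n$. By the strong (or weak) law of large numbers, $W_n\to\mu$ almost surely (in probability suffices). Since $|W_n|\le K$, the variables $|W_n|^\alpha$ are uniformly bounded. Bounded convergence then gives $E|W_n|^\alpha\to|\mu|^\alpha$, that is, $E|Y_n|^\alpha/n^\alpha\to|\mu|^\alpha$. For the error term, $|E(G(Y_n))|\le C'(E|Y_n|^{\alpha-1}+1)\le C'(K^{\alpha-1}n^{\alpha-1}+1)=o(n^\alpha)$, using $\alpha\ge1$. This gives (1).

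\textbf{Case $\mu=0$.} Put $Z_n=Y_n/(\sigma\sqrt n)$ when $\sigma>0$. If $\sigma=0$, then $X_\lambda=0$ almost surely, so $Y_n=0$, $E(F(Y_n))=F(0)$ is constant, and dividing by $n^{\alpha/2}\to\infty$ gives the limit $0$, which is the claimed value since $\sigma^\alpha=0$. For $\sigma>0$, the central limit theorem gives $Z_n\to Z$ in distribution. By the continuous mapping theorem, $|Z_n|^\alpha\to|Z|^\alpha$ in distribution.

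The \textbf{main obstacle} is upgrading this to convergence of expectations. Convergence in distribution gives convergence of expectations once the family $|Z_n|^\alpha$ is uniformly integrable. For that it suffices to bound a higher moment uniformly in $n$, say $\sup_n E|Z_n|^{2m}<\infty$ for an integer $2m>\alpha$. This follows from expanding the even moment of a sum of i.i.d.\ centered bounded variables:
$$E(Y_n^{2m})=\sum E(X_{i_1}\cdots X_{i_{2m}}).$$
Any term in which some index appears exactly once vanishes, because the $X_i$ are centered and independent. So only index patterns in which each index appears at least twice survive. Such a pattern involves at most $m$ distinct indices, so there are $O(n^m)$ surviving terms, each bounded by $K^{2m}$. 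Hence $E(Y_n^{2m})=O(n^m)$, i.e.\ $E(Z_n^{2m})=O(1)$. (Alternatively one can cite the Marcinkiewicz–Zygmund inequality.) Therefore $E|Y_n|^\alpha/n^{\alpha/2}=\sigma^\alpha E|Z_n|^\alpha\to\sigma^\alpha E|Z|^\alpha$.

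For the error term in this case, use Jensen (or Lyapunov): $E|Y_n|^{\alpha-1}\le(E|Y_n|^{\alpha})^{(\alpha-1)/\alpha}=O(n^{(\alpha-1)/2})$. Hence $E(G(Y_n))=O(n^{(\alpha-1)/2}+1)=o(n^{\alpha/2})$, which gives (2). The restatements in terms of $\ell(\lambda^n)$ then follow from the identity $\ell(\lambda^n)=E(F(Y_n))$ noted before the lemma (reading $\ell$ there as $L$). When $\mu\ne0$, or when $\mu=0$ and $\sigma\ne0$, the limit is nonzero because $C\ne0$, so the limit statements can be rewritten as the asymptotic equivalences $\sim$ claimed in the lemma.
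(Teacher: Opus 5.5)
Your proof is correct. For $\mu=0$ it follows the paper's route, and for $\mu\neq0$ it takes a different one.

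\textbf{Case $\mu\neq0$.} The paper does not use the law of large numbers. It writes $Y_n=n\mu\bigl(1+\sigma Z_n/(\mu\sqrt n)\bigr)$ with the CLT normalization $Z_n$ and expands $|Y_n|^\alpha=|n\mu|^\alpha+O\bigl((1+|Z_n|^\alpha)n^{\alpha-1/2}\bigr)$. It then appeals to $E|Z_n|^\alpha\to E|Z|^\alpha$ to get $E(F(Y_n))=C|n\mu|^\alpha+O(n^{\alpha-1/2})$. That gives an explicit relative error $O(n^{-1/2})$. However, it tacitly assumes $\sigma\neq0$ and uniform moment bounds on $Z_n$, neither of which it proves. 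Your argument via $Y_n/n\to\mu$ and bounded convergence (using $|X_\lambda|\le K$) is shorter and needs neither moment control nor any assumption on $\sigma$. The cost is that you obtain only the limit, not a rate.

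\textbf{Case $\mu=0$.} Your argument is the paper's: CLT plus convergence of the $\alpha$-th absolute moment. The paper simply asserts that moment convergence from $Z_n\to Z$ in distribution. You justify it through uniform integrability, using the even-moment bound $E(Y_n^{2m})=O(n^m)$. You also bound the error term explicitly with Lyapunov's inequality, where the paper does so only implicitly. In addition, you treat the degenerate subcase $\sigma=0$, which the paper's normalization excludes. Your even-moment bound, applied to the centered variables $X_i-\mu$, is also exactly what would make the paper's rate in the case $\mu\neq0$ rigorous.
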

\begin{proof}
If $\mu \neq 0$, then $Y_n=\sigma\sqrt{n}Z_n+n\mu=n\mu(1+\frac{\sigma Z_n}{\mu\sqrt{n}})$. Therefore, $F(Y_n)=C|Y_n|^\alpha+O(|Y_n|^{\alpha-1})=C|n\mu|^\alpha+O((1+|Z_n|^\alpha)n^{\alpha-1/2})$. Since $E(Z_n^\alpha) \to E(Z^\alpha)<\infty$ for any $\alpha\in \mathbb{R}$, $E(F(Y_n))=C|n\mu|^\alpha+O(n^{\alpha-1/2})$. 

If $\mu=0$, then $Y_n=\sigma\sqrt{n}Z_n$,  $F(Y_n)=C|\sigma\sqrt{n}Z_n|^\alpha+O(|\sqrt{n}Z_n|^{\alpha-1})$, $E(F(Y_n))=n^{\alpha/2}C\sigma^{\alpha}E(|Z_n|^\alpha)+O(n^{(\alpha-1)/2})$ and $Z_n \to Z$. So we are done.
\end{proof}
Now we will state our main result in this section. First we set up the notations for this result.
\begin{enumerate}
\item Let $k$ be a field of characteristic $p>0$, $R=kG$ where $\dim_k R=|G|=p^{e_1+\ldots+e_r}=D$.
\item Let $M=\oplus_{i \in \mathbb{Z}}\Omega^i(k)^{a_i}\neq 0$, $c^G_n(M)=\dim_k \operatorname{core}(M^{\otimes n})$.
\item Let $\lambda_1=\sum_{i \in \mathbb{Z}}a_iv_i \in \Lambda_0^+$,  $\gamma=\|\lambda_1\|=\sum_{i \in \mathbb{Z}}a_i$, $\lambda_2=\frac{1}{\gamma}\lambda_1=\sum_{i \in \mathbb{Z}}\frac{a_i}{\gamma}v_i$.
\item Let $X$ be a random variable associated with $\lambda_2$, that is, $P(X=i)=a_i/\gamma$. The mean and variance of $X$ are $\mu$ and $\sigma^2$ respectively.
\item Let $Z\sim N(0,1)$ be a random variable.
\end{enumerate}
\begin{theorem}\label{4.15}
Under the above notations, if $\mu \neq 0$, then
$$c^G_n(M)\sim \gamma^n\cdot n^{r-1}\cdot |\mu|^{r-1}\cdot \frac{D}{2(r-1)!}.$$
If $\mu=0$ and $\sigma \neq 0$, then
$$c^G_n(M)\sim \gamma^n\cdot n^{(r-1)/2}\cdot \sigma^{r-1}\cdot E(|Z|^{r-1})\cdot \frac{D}{2(r-1)!}.$$
If $\mu=\sigma=0$, then
$$c^G_n(M)=\gamma^n.$$
\end{theorem}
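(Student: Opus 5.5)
The plan is to reduce $c^G_n(M)$ to an expectation over a random walk and then apply the preceding lemma with $F(i)=\ell(\Omega^i(k))$ and $\alpha=r-1$.

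First, I compute $\operatorname{core}(M^{\otimes n})$ explicitly. By \Cref{4.8}, in the stable module category $\Omega^i(k)\otimes\Omega^j(k)=\Omega^{i+j}(k)$. Expanding $M^{\otimes n}$ gives
$$\operatorname{core}(M^{\otimes n})=\bigoplus_{j\in\mathbb{Z}}\Omega^j(k)^{b_{n,j}},$$
where $\sum_j b_{n,j}v_j=\lambda_1^n$ in $\Lambda_0$. Here I use that each $\Omega^j(k)$ is indecomposable and non-projective, since $k$ is indecomposable and $\Omega$ preserves indecomposability by the lemma above. Hence
$$c^G_n(M)=\sum_j b_{n,j}\,\ell(\Omega^j(k))=\gamma^n\sum_j (\lambda_2^n)_j F(j)=\gamma^n E(F(Y_n)),$$
with $Y_n=X_1+\dots+X_n$ i.i.d. copies of $X$, by the lemma on $X_{\lambda_1\lambda_2}$. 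The linear map is $L(v_j)=F(j)=\ell(\Omega^j(k))$.

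Second, I feed in the asymptotics of $F$. By \Cref{4.7}, for $r\ge 2$,
$$F(i)=\frac{D}{2(r-1)!}|i|^{r-1}+O(|i|^{r-2}).$$
The preceding lemma requires $\alpha\ge1$, so $r\ge2$ is exactly the range where it applies; for $|i|$ small the error term absorbs the finitely many exceptional values. Applying case (1) when $\mu\neq0$ gives $E(F(Y_n))\sim C|\mu|^{r-1}n^{r-1}$, and case (2) when $\mu=0,\sigma\ne0$ gives $C\sigma^{r-1}E(|Z|^{r-1})n^{(r-1)/2}$, where $C=D/(2(r-1)!)$. Multiplying by $\gamma^n$ yields the two formulas.

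Third, the degenerate cases. If $\mu=\sigma=0$, then $X\equiv0$, so $M=k^{\gamma}$ and $M^{\otimes n}=k^{\gamma^n}$, giving $c_n=\gamma^n$. The case $r=1$ (cyclic $G$) must be treated separately, since there $\Omega^{2i}(k)=k$ and $\Omega^{2i+1}(k)=\Omega(k)$ of dimension $D-1$. So $F$ is bounded and takes two values, and one checks that $E(F(Y_n))$ does not converge to the stated constant $D/2$ in general (e.g. $M=k$ gives $c_n=1$). I would therefore restrict the statement to $r\ge2$, or interpret $r-1=0$ through the averaged value $\frac{1+(D-1)}{2}=D/2$ only when parity equidistributes. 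The main care point is verifying that the preceding lemma's error estimates are valid: the moments $E|Z_n|^{\alpha}$ are bounded uniformly in $n$. This holds because $X$ has finite support, so all moments of $Z_n$ converge to those of $Z$, via uniform integrability or a Hoeffding bound. I would state this explicitly, and it is the step I expect to be the only real obstacle.
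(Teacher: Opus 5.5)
Your proposal is correct and is essentially the paper's proof: write $c^G_n(M)=\gamma^n E(F(Y_n))$ with $F(i)=\dim_k\Omega^i(k)$, insert the asymptotics of \Cref{4.7}, apply the preceding lemma with $\alpha=r-1$, and handle $\mu=\sigma=0$ via $M=k^\gamma$. Your additional remarks are sound: the paper's argument only covers $r\ge 2$, and convergence of $E|Z_n|^{\alpha}$ does need the uniform integrability you cite. One correction: $M=k$ falls under the third case, so it is not a counterexample for $r=1$; use instead $M=\Omega(k)$ over $\mathbb{Z}/3$, where $\mu=1$ but $c^G_n(M)$ alternates between $1$ and $2$ rather than tending to $D/2=3/2$.
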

\begin{proof}
We define $L:\Lambda_0 \to \mathbb{R},F:\mathbb{Z} \to \mathbb{R}$ such that $F(i)=\ell(v_i)=\dim_k \Omega^i(k)$, then by \Cref{4.7},
$$F(i)=\ell(v_i)=\frac{D}{2(r-1)!}|i|^{r-1}+O(|i|^{r-2}).$$
Then 
$$c^G_n(M)=\ell(\lambda_1^n)=\ell((\gamma \lambda_2)^n)=\gamma^n\ell(\lambda_2^n).$$ 
Now $\lambda_2^n \in \Lambda_0^+$ is normalized which corresponds to random variable $Y_n=X_1+\ldots+X_n$ where $X_i$'s are independent and follow the same probability distribution as $X_{\lambda_2}$, so
$$\ell(\lambda_2^n)=E(F(Y_n)).$$
In the first case $\mu \neq 0$, so
$$E(F(Y_n))\sim\frac{D}{2(r-1)!}n^{r-1}|\mu|^{r-1}.$$
In the second case $\mu=0$ and $\sigma \neq 0$, so
$$E(F(Y_n))\sim\frac{D}{2(r-1)!}n^{(r-1)/2}\sigma^{r-1}E(|Z|^{r-1}).$$
Combining the above inequalities, we get the result in these two cases. Finally, if $\mu=\sigma=0$, then $M=k^\gamma$, so the last case is trivial.
\end{proof}
\begin{example}
We use the above theorem to verify \Cref{4.11} and \Cref{4.12}. In the setting of \Cref{4.11}, we have $r=2,e_1=e_2=1, D=2^{e_1+e_2}=4, \gamma=2$. The probability distribution of $X$ is $P(X=\pm 1)=1/2$, so its mean $\mu=0$ and variance $\sigma^2=1$. We see if $Z\sim N(0,1)$,
$$E(|Z|)=\int_\mathbb{R}|z|\frac{1}{\sqrt{2\pi}}e^{-z^2/2}dz=\sqrt{\frac{2}{\pi}}.$$
Therefore,
$$c^G_n(M)\sim 2^n\cdot n^{(2-1)/2}\cdot \sqrt{\frac{2}{\pi}}\cdot\frac{4}{2\cdot 1!}=\sqrt{\frac{8n}{\pi}}2^n.$$
which agrees with direct computation in \Cref{4.11}.

In the setting of \Cref{4.12}, $r,e_1,e_2,D,\gamma$ are the same, $P(X=0)=P(X=1)=1$, so $\mu=1/2,\sigma^2=1/4$. We see
$$c^G_n(M)\sim 2^n\cdot n^{2-1}\cdot \frac{1}{2}\cdot\frac{4}{2\cdot 1!}=n2^n.$$
\end{example}
\begin{remark}
From the result we see that when $k,G$ are fixed, the asymptotic behavior of $c^G_n(M)$ only depends on the number of direct summands and the mean and the variance of the associated probability distribution, and is independent of the concrete distribution.    
\end{remark}
The same method can be applied to any additive functor on $\operatorname{core}(M^{\otimes n})$ with good asymptotic behavior. For example, in \cite{CHU}, the authors have considered the following sequence
$$d^G_n(M)=\dim_k \operatorname{soc}\operatorname{core}(M^{\otimes n}).$$
We now derive its asymptotic behavior.
\begin{theorem}\label{4.18}
Under the same notations as \Cref{4.15}, if $\mu \neq 0$, then
$$d^G_n(M)\sim \gamma^n\cdot n^{r-1}\cdot |\mu|^{r-1}\cdot \frac{1}{(r-1)!}.$$
If $\mu=0$ and $\sigma \neq 0$, then
$$d^G_n(M)\sim \gamma^n\cdot n^{(r-1)/2}\cdot \sigma^{r-1}\cdot E(|Z|^{r-1})\cdot \frac{1}{(r-1)!}.$$
If $\mu=\sigma=0$, then
$$d^G_n(M)=\gamma^n.$$
\end{theorem}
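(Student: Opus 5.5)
The argument mirrors the proof of \Cref{4.15}; only the additive function changes. Define the $\mathbb{R}$-linear map $L:\Lambda_0\to\mathbb{R}$ by $L(v_i)=F(i)=\dim_k\operatorname{soc}\Omega^i(k)=r(\Omega^i(k))$. Then $\operatorname{core}(M^{\otimes n})=\oplus_i\Omega^i(k)^{b_i}$, where $\sum_i b_iv_i=\lambda_1^n$. This holds because $\Omega^i(k)\otimes\Omega^j(k)=\Omega^{i+j}(k)\oplus(\text{projective})$ by \Cref{4.8}, and because each $\Omega^i(k)$ is indecomposable and non-projective (by the lemma on $\Omega$ preserving indecomposability, starting from $k$). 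The socle is additive, so
\[
d^G_n(M)=L(\lambda_1^n)=\gamma^nL(\lambda_2^n)=\gamma^nE(F(Y_n)),
\]
with $Y_n$ a sum of $n$ independent copies of $X$, exactly as before.

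The main new input is the asymptotic of $F(i)$. For $n\ge 0$, \Cref{4.4}(4) with $M=k$ (so $M^*=k$) gives $r(\Omega^{-n}(k))=\mu(\Omega^n(k))=\beta_n(k)$. For $n\ge1$ it gives $r(\Omega^n(k))=\mu(\Omega^{-n}(k))=\beta_{n-1}(k)$. By \Cref{4.6}(1), $\beta_i$ is a polynomial in $i$ with leading term $\frac{1}{(r-1)!}i^{r-1}$. Hence $\beta_{|i|}$ and $\beta_{|i|-1}$ both equal $\frac{1}{(r-1)!}|i|^{r-1}+O(|i|^{r-2})$. So
\[
F(i)=\frac{1}{(r-1)!}|i|^{r-1}+O(|i|^{r-2})
\]
for all $i\in\mathbb{Z}$, matching \Cref{4.7} with $\frac{D}{2(r-1)!}$ replaced by $\frac{1}{(r-1)!}$.

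We then apply the preceding probabilistic lemma with $\alpha=r-1$ and $C=\frac{1}{(r-1)!}$. If $\mu\ne0$, this gives $E(F(Y_n))\sim C|\mu|^{r-1}n^{r-1}$. If $\mu=0$ and $\sigma\ne0$, it gives $E(F(Y_n))\sim C\sigma^{r-1}E(|Z|^{r-1})n^{(r-1)/2}$. If $\mu=\sigma=0$, then $M=k^\gamma$, $M^{\otimes n}=k^{\gamma^n}$, and the socle has dimension $\gamma^n$.

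The one point to check is that the lemma requires $\alpha\ge1$, i.e.\ $r\ge2$, which is also needed in \Cref{4.6}. The case $r=1$ (cyclic $G$) has to be handled separately. There $\Omega^i(k)$ has dimension $1$ or $|G|-1$, and is uniserial with one-dimensional socle, so $F\equiv1$ and $d^G_n(M)=\gamma^n$, consistent with the formulas for $r-1=0$ since $E(|Z|^0)=1$. The only step needing care is confirming that the error term from \Cref{4.6} is uniform enough for the lemma, which it is, since $\beta$ is an honest polynomial.
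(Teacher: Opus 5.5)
Your proposal is correct and follows the paper's proof: you obtain $r(\Omega^i(k))=\beta_{i-1}(k)$ for $i\ge 1$ and $\beta_{|i|}(k)$ for $i\le 0$ from \Cref{4.4}(4), read off the leading term $\frac{1}{(r-1)!}|i|^{r-1}$ from Tate's resolution, and rerun the argument of \Cref{4.15} with this socle-dimension function. Your separate treatment of the cyclic case $r=1$ is a correct addition that the paper leaves implicit: there every $\Omega^i(k)$ has one-dimensional socle, so $d^G_n(M)=\gamma^n$ exactly.
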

\begin{proof}
We see
$$\dim_k\operatorname{soc}(\Omega^n(k))=r(\Omega^n(k))=
\begin{cases}
\beta_{n-1}(k) & n\geq 1\\
\beta_{|n|}(k) & n \leq 0.
\end{cases}
$$
Therefore, from Tate's resolution we see
$$r(\Omega^i(k))=\frac{|i|^{r-1}}{(r-1)!}+O(|i|^{r-2})$$
and proceed as in \Cref{4.15}.
\end{proof}
\subsection{$\Omega$-algebraic modules, and a question by Benson and Symonds}
We recall the concepts of $\Omega$-algebraic and $\Omega^{\pm}$-algebraic modules introduced in \cite{BensonGamma} and \cite{CHU}.
\begin{definition}[\cite{CHU}]
A $kG$-module $M$ is called $\Omega$-algebraic if the non-projective indecomposable direct summands of the modules $M^{\otimes n}$
fall into finitely many orbits of both
functors $\Omega$ and $\Omega^{-1}$. If we replace $\Omega$ and $\Omega^{-1}$ by just $\Omega$, we say it is $\Omega^+$-algebraic, and if we replace $\Omega$ and $\Omega^{-1}$ by just $\Omega^{-1}$, we say it is $\Omega^-$-algebraic.  
\end{definition}
We say a function $\phi:\mathbb{N} \to \mathbb{N}$ is eventually recursive if there is $d,a_1,\ldots,a_d$ such that $\phi(n)+a_1\phi(n-1)+\ldots+a_n\phi(n-d)=0$ for $n\gg 0$. In \cite{BensonGamma}, Benson and Symonds ask the following question:
\begin{question}[\cite{BensonGamma}, Question 14.2]
If $M$ is $\Omega$-algebraic, is $c^G_n(M)$ eventually recursive in terms of $n$?   
\end{question}
This is a weaker version of \cite[Question 13.3]{BensonGamma} asking whether $c^G_n(M)$ is eventually recursive for any module $M$. The main results in \cite{CHU} are as follows.
\begin{enumerate}
\item If $M$ is $\Omega$-algebraic, then $\sum_{n \geq 0}c^G_n(M)t^n$ is algebraic.
\item If $M$ is $\Omega^+$ or $\Omega^-$-algebraic, then $\sum_{n \geq 0}c^G_n(M)t^n$ is rational and $c^G_n(M)$ is eventually recursive.
\end{enumerate}
It is easy to see $\Omega^n(k)$ for any $n$ and their direct sums are $\Omega$-algebraic, since any tensor power of such modules lies in the orbit of $k$. They are the ``simpliest" kind of $\Omega$-algebraic modules. Now we prove that even among direct sums of such kind of modules, there are examples where $c^G_n$ is not eventually recursive.
\begin{theorem}\label{4.21}
Let $G$ be an abelian $p$-group minimally generated by $r$ elements where $r$ is even. Then for $M=\oplus_{i \in \mathbb{Z}}\Omega^i(k)^{a_i}\neq 0$ with $\sum ia_i=0$, $M$ is $\Omega$-algebraic while $c^G_n(M)$ is not eventually recursive.
\end{theorem}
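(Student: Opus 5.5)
The plan is to read both halves off earlier results: $\Omega$-algebraicity is formal, and non-recursiveness comes from the half-integral exponent in \Cref{4.15}. One caveat first. If $a_i=0$ for all $i\neq 0$, then $M=k^{\gamma}$ and $c^G_n(M)=\gamma^n$, which is eventually recursive. So I will assume, as the statement implicitly must, that $a_i\neq 0$ for some $i\neq 0$. Together with $\sum ia_i=0$, this says that $X$ has mean $\mu=0$ and variance $\sigma^2>0$. Since $r$ is even and $r\geq 1$, we have $r\geq 2$, so \Cref{4.7} and \Cref{4.15} apply.

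\emph{$\Omega$-algebraicity.} By \Cref{4.8}, $\Omega^i(k)\otimes\Omega^j(k)=\Omega^{i+j}(k)$ in the stable module category. Hence every non-projective indecomposable summand of $M^{\otimes n}$ is some $\Omega^m(k)$, and all of these lie in the single orbit of $k$ under $\Omega^{\pm1}$. This is exactly the definition of $\Omega$-algebraic.

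\emph{Asymptotics.} By the second case of \Cref{4.15},
$$c^G_n(M)\sim C\gamma^n n^{(r-1)/2},\qquad C=\sigma^{r-1}E(|Z|^{r-1})\frac{D}{2(r-1)!}>0,$$
and $d_0:=(r-1)/2\notin\mathbb{Z}$ because $r$ is even.

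\emph{Non-recursiveness.} Suppose for contradiction that $c_n:=c^G_n(M)$ satisfies a linear recurrence with constant coefficients for $n\gg0$. Then for large $n$ we can write $c_n=\sum_j P_j(n)\theta_j^n$, with distinct $\theta_j\in\mathbb{C}^\times$ and nonzero polynomials $P_j$; the sum is nonempty since $c_n>0$. Let $m=\max|\theta_j|$, and let $d$ be the largest degree of a $P_j$ with $|\theta_j|=m$. Write $\theta_j=me^{i\varphi_j}$ for those $j$ with $|\theta_j|=m$ and $\deg P_j=d$, and let $b_j\neq 0$ be the leading coefficient of $P_j$. Then
$$\frac{c_n}{m^n n^d}=s_n+o(1),\qquad s_n=\sum_j b_je^{in\varphi_j}.$$
The sequence $s_n$ is a nonzero trigonometric polynomial with distinct frequencies. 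It is bounded, and $\frac1N\sum_{n<N}|s_n|^2\to\sum_j|b_j|^2>0$, so $\limsup|s_n|>0$.

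Comparing with the asymptotics above gives
$$\frac{C\gamma^n n^{d_0}}{m^n n^d}(1+o(1))=s_n+o(1).$$
The left side is eventually positive and tends to $0$, tends to $\infty$, or tends to $C$, according to how $(\gamma,d_0)$ compares lexicographically with $(m,d)$. The right side is bounded and does not tend to $0$. This forces $m=\gamma$ and $d=d_0$. That is impossible, since $d\in\mathbb{Z}$ while $d_0\notin\mathbb{Z}$.

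The main obstacle is this last step: making rigorous that a linearly recurrent sequence cannot grow like $\gamma^n n^{d_0}$ with non-integral $d_0$. The delicate point is that several dominant roots of modulus $m$ could in principle cancel along subsequences. The mean-square argument for the trigonometric polynomial $s_n$ handles this: it shows $s_n\not\to 0$, so no such cancellation can occur along the whole sequence.
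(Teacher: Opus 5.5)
Your proof is correct and follows the same route as the paper. \Cref{4.15} gives $c^G_n(M)\sim C\gamma^n n^{(r-1)/2}$ with $(r-1)/2\notin\mathbb{Z}$, and an eventually recursive sequence cannot grow this way. The paper only sketches that last step, via residue classes and ``oscillating bounded'' coefficients; your mean-square argument for $s_n$ makes it rigorous.

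Your caveat is also right. The paper's proof asserts $\sigma\neq 0$ without justification, but for $M=k^{\gamma}$ one has $\sigma=0$ and $c^G_n(M)=\gamma^n$, which is eventually recursive. So the statement genuinely needs the extra hypothesis that $a_i\neq 0$ for some $i\neq 0$.
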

\begin{proof}
We define the random variable $X$ associated with $M$ as in the notations for \Cref{4.15}, then $\mu=0$ and $\sigma \neq 0$. \Cref{4.15} says
$$c^G_n(M)\sim C\gamma^nn^{(r-1)/2}$$
where $C\neq 0$ and $(r-1)/2 \notin \mathbb{Z}$. However, for any eventually recursive function $\phi$ we have that there is a positive integer $d$ such that in each residue class modulo $d$, $\phi(n)\sim C_n\gamma^nn^k$ where $k$ is an integer and $C_n$ is either a constant or an oscillating bounded function. So $c^G_n(M)$ cannot be an eventually recursive function.
\end{proof}
For example, the module $M=\Omega(k)\oplus \Omega^{-1}(k)$ over $V_4$ appearing in \Cref{4.11} satisfies the condition of \Cref{4.21}.

\section{Computation of core series of $\Omega$-algebraic modules}
\subsection{Core series}
We begin with the following definition.
\begin{definition}
Let $G$ be a finite group and $M$ be a modular representation of $G$. \textbf{The core series} of $M$ is    
$$c_M(z)=\sum_{n \geq 0}c^G_n(M)z^n.$$
\end{definition}
This series will be the main object to study in the following two sections.

The core series is mainly studied in \cite{BensonGamma} and \cite{CHU}. In \cite{BensonGamma}, the invariant $\gamma_G(M)$ is just the radius of convergence of $c_M(z)$; the result in \cite{CHU} says the core series of $\Omega$-algebraic modules are algebraic, and the core series of $\Omega^+$ or $\Omega^-$-algebraic modules are rational. Actually, a series is rational if and only if it is a generating function of an eventually recursive sequence. So, it is reasonable to study the whole sequence as a power series instead of a sequence indexed by integers. \Cref{4.21} in last section just says some $\Omega$-algebraic modules have core series that are algebraic but not rational.

Now the following question is natural.
\begin{question}\label[question]{5.2}
What kind of properties does a core series have, if it is algebraic but not rational?    
\end{question}
The following result is due to Benson in a personal communication, which is a reformulation of \Cref{4.11}. 
\begin{example}\label[example]{5.3}
Let $G=V_4$ and $M=\Omega(k)\oplus \Omega^{-1}(k)$, then
$$c_M(z)=(1+2z)(2/(1-4z^2)+4/(1-4z^2)^{3/2}).$$
\end{example}
In this section, we will present a systematic way to compute the core series of $\Omega$-algebraic modules. One direction for an answer to \Cref{5.2} is the upper bound of the degree of an algebraic series over $\mathbb{C}(z)$, and we will give such an upper bound depending on the data coming from the decomposition of tensor products. This relies on an algebraic version of Cauchy's residue theorem, which will be introduced in the next subsection.

\subsection{Formal Cauchy's residue theorem}
The well-known Cauchy's residue theorem allows us to compute contour integral in terms of residue at the poles of a meromorphic function. In this subsection, we will introduce the formal Cauchy's residue theorem, which works over the field of formal Laurent series over any field that does not necessarily have an analytic structure.

The formal Cauchy's residue theorem has been widely used in analytic combinatorics to evaluate the generating function of some combinatorial series. For example, see \cite{Cauchy1}\cite{Cauchy2}\cite{Cauchy3}. The Lagrange inversion theorem for inverse function is a direct application of this theorem in characteristic $0$. However, most references do not give the full statement of formal Cauchy's residue theorem. Therefore, we present a detailed proof of one version we need in this subsection. 

In this subsection, we consider the following table of monomials in two variables $u,z$:
\begin{center}
\begin{tabular}{ccccc}
 & $\vdots$ & $\vdots$ & $\vdots$ &  \\
$\ldots$ & $uz^{-1}$ & $u$ & $uz$ & $\ldots$ \\
$\ldots$ & $z^{-1}$ & 1 & $z$ & $\ldots$ \\
$\ldots$ & $u^{-1}z^{-1}$ & $u^{-1}$ & $u^{-1}z$ & $\ldots$ \\
 & $\vdots$ & $\vdots$ & $\vdots$ &  \\
\end{tabular}    
\end{center}
Linear combinations of these terms over $\mathbb{C}$ which are possibly an infinite sum form different rings. For example, $\mathbb{C}((u))((z))$ is the linear combination such that in each column the nonzero terms are bounded from below, and all the nonzero terms are bounded from left uniformly in rows. Every nonzero polynomial is invertible in this ring, so we can realize $\mathbb{C}(u)$ and $\mathbb{C}(z)$ as subrings of $\mathbb{C}((u))((z))$.

We can also consider the Puiseux series with bounded denominator $\mathbb{C}((z^{1/d}))$. It is well-known that the algebraic closure of the field of Laurent power series is the field of Puiseux series; in particular, if there are finitely many algebraic elements over $\mathbb{C}((z))$, we may assume they lie in $\mathbb{C}((z^{1/d}))$ for one $d$. We will be considering the ring $\mathbb{C}((u))((z^{1/d}))$ and its subrings in the following part, which can be viewed as the rings of certain linear combinations of monomials in $u,z^{1/d}$.

\begin{definition}
Let $f(u,z)=\sum_{m,n}a_{m,n}u^mz^n \in \mathbb{C}((u))((z))$. Define
$$[u^i]f(u,z)=\sum_{n}a_{i,n}z^n \in \mathbb{C}((z)).$$
\end{definition}
The above definition is the analogue of contour integral.
\begin{definition}
Let $h(u,z) \in \mathbb{C}((z))[u]$. It can be viewed as a polynomial in $u$, and let $\rho$ be its root. We say $\rho$ is a small root if $\rho=0$ or $\operatorname{ord}_z\rho(z)>0$, otherwise we say it is a big root. Let $f(u,z) \in \mathbb{C}((u))((z))$. Suppose $f(u,z)=g(u,z)/h(u,z)$ such that $g(u,z) \in \mathbb{C}[[u,z]]$ and $h(u,z) \in \mathbb{C}((z))[u]$ with no common factor, or in other words, $g(\rho(z),z)\neq 0$ for any small root $\rho(z)$ of $h$, then we say a small root $\rho(z)$ of $h$ is a pole of $f$.  
\end{definition}
\begin{remark}
Let $\rho(z)$ be a big root with valuation $-\nu,\nu \in \mathbb{Q}_{\geq 0}$. Then $\rho(z)=z^{-\nu}\phi(z)$ with $\phi(z)$ invertible in $\mathbb{C}[[z^{1/d}]]$. Therefore,
$$1/(u-\rho(z))=1/(u-z^{-\nu}\phi(z))=-z^{\nu}\phi^{-1}(z)/(1-uz^\nu\phi^{-1}(z))=-z^{\nu}\phi^{-1}(z)\sum_{i \geq 0}(uz^\nu\phi^{-1}(z))^i$$
is an element in $\mathbb{C}[[u,z^{1/d}]]$. Therefore we may always assume $h$ has no big roots, so when we consider the poles, we only need to look at the small roots.
\end{remark}
\begin{proposition}[Partial fraction decomposition]
Let $f(u,z) \in \mathbb{C}((u))((z))$. Suppose $f(u,z)=g(u,z)/h(u,z)$ such that $g(u,z) \in \mathbb{C}[[u,z]]$ and $h(u,z) \in \mathbb{C}((z))[u]$. Let $u=\rho_1(z),\ldots,\rho_r(z)$ be all the poles of $f(u,z)$, which has multiplicity $d_1,\ldots,d_r$ respectively. Then we can write
$$f(u,z)=f_0(u,z)+\sum_{1 \leq i \leq r,1 \leq j \leq d_i} f_{ij}(z)/(u-\rho_i(z))^j$$
where $f_0(u,z) \in \mathbb{C}[[u]]((z^{1/d}))$ has no poles and $f_{ij}(z) \in \mathbb{C}((z^{1/d}))$. Moreover if $f(u,z) \in \mathbb{C}(u,z)$, then $f_{ij}(z) \in \mathbb{C}(z,\rho_i(z))$ and $f_0(u,z) \in \mathbb{C}(u,\rho_1(z),\ldots,\rho_r(z))$.
\end{proposition}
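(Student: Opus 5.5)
The plan is to factor $h$ over the algebraic closure of $\mathbb{C}((z))$ and split off its small roots, so that $f$ becomes a polynomial part plus principal parts at the poles. We work in $K=\mathbb{C}((z^{1/d}))$, where $d$ is chosen so that every root of $h$ lies in $K$. First we reduce to the case where $h$ has only small roots. By the preceding remark, for each big root $\rho$ the factor $1/(u-\rho(z))$ lies in $\mathbb{C}[[u,z^{1/d}]]$, so each such factor can be absorbed into the numerator. Likewise, the leading coefficient of $h$ is a unit of $K$. We therefore write $h(u,z)=c(z)\prod_{\text{big}}(u-\rho)\cdot h_s(u,z)$, where $h_s=\prod_i(u-\rho_i(z))^{d_i'}$ runs over the small roots. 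This gives $f=\tilde g/h_s$ with $\tilde g\in\mathbb{C}[[u]]((z^{1/d}))$ (the factor $c(z)^{-1}$ may introduce negative powers of $z$). If some small root $\rho_i$ is not a pole, meaning $\tilde g(\rho_i,z)=0$, we cancel the factor $u-\rho_i$ from $\tilde g$. This cancellation uses a Weierstrass-type division, described next.

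The key tool is a division lemma. For $\rho\in K$ with $\operatorname{ord}_z\rho>0$ and $F\in\mathbb{C}[[u]]((z^{1/d}))$, the substitution $F(\rho(z),z)$ converges in $K$, and we can write
$$F(u,z)=F(\rho,z)+(u-\rho)Q(u,z),\qquad Q\in\mathbb{C}[[u]]((z^{1/d})).$$
We take $Q=\sum_m a_m(z)\,(u^m-\rho^m)/(u-\rho)$, where $(u^m-\rho^m)/(u-\rho)=\sum_{j<m}u^j\rho^{m-1-j}$. Convergence in $\mathbb{C}[[u]]((z^{1/d}))$ holds because $\operatorname{ord}_z\rho>0$: for fixed $j$, the coefficient of $u^j$ is a sum whose $z$-orders tend to infinity. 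Iterating this lemma $d_i'$ times at $\rho_i$ produces the Taylor expansion of $F$ at $\rho_i$. Dividing by $(u-\rho_i)^{d_i'}$ then gives principal parts $\sum_j f_{ij}(z)/(u-\rho_i)^j$ plus a remainder $F_1/\prod_{k\neq i}(u-\rho_k)^{d_k'}$. Here $F_1\in\mathbb{C}[[u]]((z^{1/d}))$, and the coefficients $f_{ij}$ are the values at $\rho_i$ of the derivative-like quotients. Inducting over the poles yields the required $f_0$ and $f_{ij}$, with $d_i$ equal to the true pole order after cancellation. If instead we start from $f=g/h$ with $\gcd(g,h)=1$, then no cancellation occurs and the multiplicities are exactly the $d_i$.

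The hardest part is the rationality claim. When $f\in\mathbb{C}(u,z)$, we plan to redo the decomposition purely algebraically rather than with power series. We write $f=g/h$ with $g,h\in\mathbb{C}(z)[u]$ and factor $h=c\,h_{\text{big}}\,h_s$ over $L=\mathbb{C}(z,\rho_1,\dots,\rho_r)$. Note that $h_s=\prod(u-\rho_i)^{d_i}$ has coefficients in $L$, hence so does $h_{\text{big}}=h/(c\,h_s)$. Since $h_s$ and $h_{\text{big}}$ are coprime, the classical partial fraction decomposition over the field $L$ gives
$$\frac{g}{h}=\frac{A}{h_{\text{big}}}+\sum_{i,j}\frac{f_{ij}}{(u-\rho_i)^j}$$
with $A\in L[u]$ and $f_{ij}\in L$. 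We then observe that $A/h_{\text{big}}$ has no small poles, and its expansion lies in $\mathbb{C}[[u]]((z^{1/d}))$ by the remark on big roots. By uniqueness of principal parts at small roots, which follows from the division lemma, this agrees with the analytic decomposition. Hence $f_0=A/h_{\text{big}}\in\mathbb{C}(u,\rho_1,\dots,\rho_r)(z)$.

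Finally, to get the sharper statement $f_{ij}\in\mathbb{C}(z,\rho_i)$, we apply the same local computation. We have $f_{ij}=\frac{1}{(d_i-j)!}\partial_u^{\,d_i-j}\big[(u-\rho_i)^{d_i}f\big]_{u=\rho_i}$, and $(u-\rho_i)^{d_i}f=g/(h/(u-\rho_i)^{d_i})$. The polynomial $h/(u-\rho_i)^{d_i}$ has coefficients in $\mathbb{C}(z,\rho_i)$, because it is obtained by exact division of $h\in\mathbb{C}(z)[u]$ by $(u-\rho_i)^{d_i}$. Evaluating its derivatives at $\rho_i$ therefore stays in $\mathbb{C}(z,\rho_i)$. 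The main obstacle I expect is making the uniqueness and convergence bookkeeping rigorous, including the ring $\mathbb{C}[[u]]((z^{1/d}))$ in which the negative $z$-powers coming from $c(z)^{-1}$ live. That is where I would be most careful.
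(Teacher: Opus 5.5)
Your proposal is correct and is essentially the paper's argument with the missing justifications filled in. The paper also splits off the small roots and Taylor-expands the power-series numerator at each $\rho_i$; there this step is only asserted, and your division lemma $F=F(\rho,z)+(u-\rho)Q$ actually proves it. The paper then reads off $f_{ij}\in\mathbb{C}(z,\rho_i)$ by evaluating at $u=\rho_i$, and its inductive formula is your derivative formula. The field $\mathbb{C}(u,z,\rho_1,\dots,\rho_r)$ you obtain for $f_0$ is the intended one, since without $z$ the claim already fails for $f=z/(1-u)$.

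The one step to phrase more carefully is the passage to principal parts. The Taylor coefficients of $\tilde g$ at $\rho_i$ are not yet the $f_{ij}$, and $\tilde g/\prod_{l\neq i}(u-\rho_l)^{d_l'}$ does not lie in $\mathbb{C}[[u]]((z^{1/d}))$, so your lemma does not apply to it as stated. The cleanest fix is to iterate your lemma over all roots of $h_s$ counted with multiplicity. This gives $\tilde g=P+h_sQ$ with $P\in\mathbb{C}((z^{1/d}))[u]$ of degree less than $\deg h_s$ and $Q\in\mathbb{C}[[u]]((z^{1/d}))$. Ordinary partial fractions applied to $P/h_s$ then finish the job.
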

\begin{proof}
Using the partial fraction decomposition over $\mathbb{C}((z^{1/d}))$ for some $m$, we can write    
$$f(u,z)=f_0(u,z)+\sum_{1 \leq j \leq r} g_i(u,z)/(u-\rho_i(z))^{d_i}.$$
For $g_i(u,z) \in \mathbb{C}[[u]]((z^{1/d}))$. But we can always write $g_i(u,z)=\sum_{0\leq i \leq r-1}(u-\rho_i(z))^if_{r-i}(z)+h_i(u,z)$, so the equation holds. Moreover, for each fixed $i$, we can compute $f_{ij}(z)$ inductively: $f_{i,d_i}(z)=f(u,z)(u-\rho_i(z))^{d_i}|_{u=\rho_j(z)}$, $f_{i,d_{i-1}}=f(u,z)(u-\rho_i(z))^{d_{i-1}}-f_{d_i}(z)(u-\rho_i(z))^{-1}|_{u=\rho_i(z)},\ldots$. From these equations we deduce that if $f(u,z) \in \mathbb{C}(u,z)$ then $f_{ij}(z) \in \mathbb{C}(z,\rho_i(z))$ and $f_0(u,z) \in \mathbb{C}(u,\rho_1(z),\ldots,\rho_r(z))$.
\end{proof}
\begin{definition}
In above proof, define the residue of $f$ at $u=\rho_i(z)$ to be
$$\operatorname{Res}_{u=\rho_i(z)}f(u,z)=f_{i,-1}(z).$$
\end{definition}
\begin{remark}
The residue is easy to compute for simple poles: when $u=\rho(z)$ is a simple pole, we have
$$\operatorname{Res}_{u=\rho(z)}f(u,z)=f(u,z)(u-\rho(z))|_{u=\rho(z)}.$$
Moreover if $f(u,z)=g(u,z)/h(u,z)$ such that $g(u,z) \in \mathbb{C}[[u,z]]$ and $h(u,z) \in \mathbb{C}((z))[u]$, and $\rho(z)$ is a simple root of $h(u,z)$, then
$$\operatorname{Res}_{u=\rho(z)}f(u,z)=\frac{g(u,z)}{h'_u(u,z)}|_{u=\rho(z)}.$$
\end{remark}
\begin{theorem}[Formal Cauchy's residue theorem]\label{5.10}
Let $f(u,z) \in \mathbb{C}((u))((z))$. Suppose $f(u,z)=g(u,z)/h(u,z)$ such that $g(u,z) \in \mathbb{C}[[u,z]]$ and $h(u,z) \in \mathbb{C}((z))[u]$. Let $u=\rho_1(z),\ldots,\rho_r(z)$ be all the poles of $f(u,z)$, then
$$[u^{-1}]f(u,z)=\sum_{1 \leq i \leq r}\operatorname{Res}_{u=\rho_i(z)}f(u,z).$$
\end{theorem}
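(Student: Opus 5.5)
The plan is to reduce, via the partial fraction decomposition proved just above, to computing $[u^{-1}]$ of each individual term. Apply that proposition to $f=g/h$ and write
$$f(u,z)=f_0(u,z)+\sum_{1\le i\le r,\,1\le j\le d_i} f_{ij}(z)(u-\rho_i(z))^{-j},$$
with $f_0\in\mathbb{C}[[u]]((z^{1/d}))$ and each $\rho_i$ small. By the remark on big roots, factors $1/(u-\rho)$ with $\rho$ big expand into $\mathbb{C}[[u,z^{1/d}]]$. Hence we may absorb them, together with $g$, into the pole-free part.

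The next point is to make sense of each term inside the single ring $\mathbb{C}((u))((z^{1/d}))$, which contains $\mathbb{C}((u))((z))$. We must check that the expansion of $(u-\rho_i)^{-j}$ chosen there is the one compatible with $f$. For a small root, $\operatorname{ord}_z\rho_i>0$ or $\rho_i=0$, so
$$\frac{1}{u-\rho_i(z)}=u^{-1}\sum_{k\ge 0}\rho_i(z)^k u^{-k}$$
converges $z$-adically: each $z$-degree receives finitely many contributions, since $\operatorname{ord}_z\rho_i^k\to\infty$. Its powers $(u-\rho_i)^{-j}=u^{-j}(1-\rho_i/u)^{-j}$ expand binomially in the same way. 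Because the field $\mathbb{C}((u))((z^{1/d}))$ has unique inverses, and the decomposition is an identity among elements of the rational-function field $\mathbb{C}((z^{1/d}))(u)$ embedded in it, the identity holds coefficientwise in this field. This embedding/uniqueness check is the step I expect to need the most care. One has to argue that $h$, a polynomial in $u$ over $\mathbb{C}((z))$, is invertible in the big ring, and that its inverse agrees with the product of the expansions of its linear factors. This holds simply because the big ring is a field containing all of these elements.

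Once the embedding is justified, $[u^{-1}]$ is $\mathbb{C}((z^{1/d}))$-linear, so we compute it term by term. First, $[u^{-1}]f_0=0$, since $f_0$ has only nonnegative powers of $u$. Second, $(u-\rho_i)^{-j}=\sum_{k\ge0}\binom{j+k-1}{k}\rho_i^k u^{-j-k}$, so its $u^{-1}$ coefficient is $1$ if $j=1$ and $0$ if $j\ge2$. Summing over all terms gives
$$[u^{-1}]f=\sum_i f_{i1}(z).$$
By definition $f_{i1}$ is the residue $\operatorname{Res}_{u=\rho_i}f$; the notation $f_{i,-1}$ in the definition refers to the coefficient of $(u-\rho_i)^{-1}$.

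Finally, the left side lies in $\mathbb{C}((z))$, so the resulting identity is the desired one. The sum of residues then automatically lies in $\mathbb{C}((z))$, even though the individual residues need only lie in $\mathbb{C}((z^{1/d}))$.
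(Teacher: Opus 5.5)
Your proposal is correct and follows the paper's argument: apply the partial fraction decomposition, expand each $(u-\rho_i(z))^{-j}$ as $u^{-j}\sum_{k\ge 0}\binom{j+k-1}{k}\rho_i(z)^k u^{-k}$, and observe that only the $j=1$ terms contribute to the $u^{-1}$ coefficient. You also make explicit details the paper leaves implicit: you work inside the field $\mathbb{C}((u))((z^{1/d}))$, so the expansions are the unique inverses there, you note that $[u^{-1}]f_0=0$, and you correctly read the paper's $f_{i,-1}$ as the coefficient of $(u-\rho_i)^{-1}$.
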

\begin{proof}
We calculate the expansion of $(u-\rho(z))^{-i}$ in $\mathbb{C}((u))((z))$. We have
$$\frac{1}{(u-\rho(z))^i}=\frac{1}{u^i}\cdot \frac{1}{(1-u^{-1}\rho(z))^i}=\frac{1}{u^i}\sum_{j \geq 0}{n+i-1\choose i-1}(u^{-1}\rho(z))^i.$$
This means in the partial fraction decomposition, only the terms $1/(u-\rho_i(z))^{-1}$ can contribute one $1/u$ in $[u^{-1}]f(u,z)$. Collecting all their coefficients which are exactly the residues, we get the theorem.
\end{proof}
\subsection{Core series of $\Omega$-algebraic modules}
We will show that formal Cauchy's residue theorem is essential in extracting the behavior of core series. For $x \in \mathbb{R}$, denote $[x]^+=\max\{x,0\},[x]^-=-\min\{x,0\}$. We have $|x|=2[x]^+-x=2[x]^-+x$.
\begin{lemma}
Let $f(z,u)=\sum_{m,n}a_{m,n}u^mz^n$. Then
\begin{enumerate}
\item $\sum_{m,n}ma_{m,n}u^mz^n=u\frac{\partial f}{\partial u}(u,z)$.
\item $\sum_n(\sum_{m}ma_{m,n})z^n=\frac{\partial f}{\partial u}(1,z)$.
\item $\sum_n(\sum_{m}[m]^-a_{m,n})z^n=[u^0]f(u,z)u/(1-u)^2$.
\item $\sum_n(\sum_{m}|m|a_{m,n})z^n=[u^0]2f(u,z)u/(1-u)^2+\frac{\partial f}{\partial u}(1,z)$.
\end{enumerate}
\end{lemma}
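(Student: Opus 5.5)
The plan is to verify the four identities coefficientwise in $z$, after fixing the setting in which they make sense. I will assume that $f(u,z)=\sum_{m,n}a_{m,n}u^mz^n$ lies in $\mathbb{C}[u,u^{-1}]((z))$, i.e. that for each fixed $n$ only finitely many $a_{m,n}$ are nonzero. This is the situation in the intended application, since the coefficient of $z^n$ records the finitely many syzygy classes in $\operatorname{core}(M^{\otimes n})$. Under this assumption, substituting $u=1$ and taking $[u^0]$ of a product with an element of $\mathbb{C}[[u]]$ are both well defined, $z$-degree by $z$-degree. Every identity is then an identity of Laurent polynomials in $u$ for each fixed $n$, extended $z$-linearly.

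For (1), I will differentiate termwise in $u$: $\frac{\partial}{\partial u}u^m=mu^{m-1}$, and multiplying by $u$ returns $mu^m$. For (2), I will substitute $u=1$ in $\frac{\partial f}{\partial u}=\sum_{m,n}ma_{m,n}u^{m-1}z^n$. This is legitimate because each $z^n$-coefficient is a Laurent polynomial in $u$, so the sum over $m$ is finite.

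For (3), the key step is to expand $u/(1-u)^2=\sum_{k\geq 1}ku^k$ in $\mathbb{C}[[u]]$. The product $f(u,z)\cdot u/(1-u)^2$ has $u^0z^n$-coefficient
$$\sum_{k\geq1}k\,a_{-k,n}=\sum_m [m]^-a_{m,n},$$
because only the terms with $m=-k<0$ pair with $u^k$ to give $u^0$, and $[m]^-=-m=k$ there. Finiteness in $m$ for fixed $n$ again makes this a finite sum, so the product is a well-defined element of $\mathbb{C}((u))((z))$.

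For (4), I will use $|m|=2[m]^-+m$ and add twice (3) to (2). The only point requiring care is the convergence and ambient-ring issue just described, namely that the expansion of $1/(1-u)^2$ is taken in $\mathbb{C}[[u]]$ rather than in $u^{-1}$. I expect this to be the main, though mild, obstacle. Everything else is a routine coefficient comparison.
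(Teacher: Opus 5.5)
Your proposal is correct and follows essentially the same route as the paper: (1) and (2) by termwise differentiation and evaluation at $u=1$, (3) by expanding $u/(1-u)^2=\sum_{k\geq 1}ku^k$ and reading off the $u^0$-coefficient, and (4) from $|m|=2[m]^-+m$. Your explicit assumption that each $z^n$-coefficient of $f$ is a Laurent polynomial in $u$ makes precise a well-definedness point the paper leaves implicit. It holds in the application, since $A$ has Laurent polynomial entries.
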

\begin{proof}
Here (1) and (2) are obvious and (4) is a consequence of (3) and the equation $|x|=2[x]^-+x$. So we prove (3). We have $u/(1-u)^2=\sum_{i \geq 0}iu^i$, so the $u^0$-coefficient of $\sum_{m,n}a_{m,n}u^mz^n\cdot\sum_{i \geq 0}iu^i$ is exactly $\sum_{i \leq 0}\sum_n-ia_{i,n}z^n$, so we get the result.
\end{proof}
\begin{lemma}\label[lemma]{5.12}
Let $s \geq 1$ be a positive integer.
\begin{enumerate}
\item $\sum_{m,n}m(m-1)\ldots(m-s+1)a_{m,n}u^mz^n=u^s\frac{\partial^s f}{\partial u^s}(u,z)$.
\item $\sum_n(\sum_{m}[m]^-([m]^-+1)\ldots([m]^-+s-1)a_{m,n})z^n=(-1)^s[u^0]\frac{\partial^s f}{\partial u^s}(u,z)u/(1-u)$.
\end{enumerate}    
\end{lemma}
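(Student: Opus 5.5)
The plan is to compare coefficients directly, working with one power of $z$ at a time. Throughout, $f(u,z)=\sum_{m,n}a_{m,n}u^mz^n$ is taken in $\mathbb{C}((u))((z))$, as in the preceding lemma. For a fixed $n$, the coefficient of $z^n$ is a Laurent series in $u$ bounded from below. This is the finiteness fact that makes every coefficient extraction below legitimate.

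For (1), I will differentiate term by term in $u$. Since $\partial/\partial u$ acts coefficientwise on $\mathbb{C}((u))((z))$, we get
$$\frac{\partial^s f}{\partial u^s}(u,z)=\sum_{m,n}m(m-1)\cdots(m-s+1)a_{m,n}u^{m-s}z^n.$$
Multiplying by $u^s$ gives the claim. The only check is that termwise differentiation is well defined on this ring, which holds because it preserves the support conditions.

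For (2), I will expand $u/(1-u)=\sum_{i\geq 1}u^i$ and multiply it by the series for $\partial^s f/\partial u^s$ from (1). Fix $n$. The coefficient of $u^0z^n$ in the product is
$$\sum_{m-s\leq -1} m(m-1)\cdots(m-s+1)\,a_{m,n},$$
obtained by pairing the exponent $m-s$ with $i=s-m\geq 1$. This sum is finite because $a_{m,n}=0$ for $m$ small, and its terms are indexed by $m\leq s-1$.

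Next I split this range into two parts.

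When $0\le m\le s-1$, the falling factorial $m(m-1)\cdots(m-s+1)$ contains the factor $0$, so these terms vanish.

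When $m<0$, write $m=-[m]^-$. Each of the $s$ factors is then negative, and
$$m(m-1)\cdots(m-s+1)=(-1)^s[m]^-([m]^-+1)\cdots([m]^-+s-1).$$

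For $m\geq s$ the rising factorial on the left side of (2) is zero since $[m]^-=0$, so nothing is lost there either. Multiplying by $(-1)^s$ and summing over $n$ then gives the formula.

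I do not expect a genuine obstacle. The only care needed is the bookkeeping just described: the terms with $0\le m\le s-1$ cancel because the falling factorial vanishes, and the sign $(-1)^s$ comes out exactly. As a sanity check, taking $s=1$ should recover part (3) of the previous lemma, after rewriting $u/(1-u)$ against $u/(1-u)^2$.
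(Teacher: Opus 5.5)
Your proof is correct and follows the same route as the paper: expand $u/(1-u)=\sum_{i\ge 1}u^i$, read off the $u^0$-coefficient for each fixed power of $z$, and convert the falling factorial at negative $m$ into $(-1)^s$ times the rising factorial in $[m]^-$. Your bookkeeping is actually more faithful to the statement than the paper's. The paper pairs $u^m$ rather than $u^{m-s}$ with $u^i$, so it really computes with $u^s\,\partial^s f/\partial u^s$ and only sees $m\le -1$; your observation that the terms with $0\le m\le s-1$ vanish, because the falling factorial then contains the factor $0$, is exactly what shows the two versions agree.
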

\begin{proof}
(1) is obvious, so we only need to prove (2). We see $u/(1-u)=\sum_{i \geq 1}u^i$, so the right side is equal to the $u^0$-coefficient of  
$$(-1)^s\sum_{m,n}m(m-1)\ldots(m-s+1)a_{m,n}u^mz^n\cdot\sum_{i \geq 1}u^i,$$
which is equal to
$$(-1)^s\sum_{i\geq 1,n}-i(-i-1)\ldots(-i-s+1)a_{-i,n}z^n=(-1)^s\sum_{i\leq -1,n}i(i-1)\ldots(i-s+1)a_{i,n}z^n.$$
It suffices to verify
$$[m]^-([m]^-+1)\ldots([m]^-+s-1)=\begin{cases}
(-1)^sm(m-1)\ldots(m-s+1) & m \leq -1\\
0 & m \geq 0,
\end{cases}$$
and it holds, so (2) is true.
\end{proof}
\begin{lemma}\label[lemma]{5.13}
Let $P$ be any bivariate polynomial, $f(u,z)=\sum_{m,n}a_{m,n}u^mz^n \in \mathbb{C}((u))((z))$. Then
$$\sum_{m,n}P(m,[m]^-)a_{m,n}z^n$$
can be expressed from $f(u,z)$ using a linear combination of the composition of the following functionals
$$|_{u=1},\cdot g(u), g(u) \in \mathbb{C}[u,u^{-1},(1-u)^{-1}],\frac{\partial}{\partial u},[u^0].$$
In particular, for any polynomial $P$,
$$\sum_{m,n}P([m]^+,[m]^-)a_{m,n}z^n$$
can be expressed using these functionals.
\end{lemma}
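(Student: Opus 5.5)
The plan is to reduce everything to the two families of functionals already handled in \Cref{5.12}. We show that the falling factorials $m(m-1)\cdots(m-s+1)$ and the rising factorials $[m]^-([m]^-+1)\cdots([m]^-+s-1)$ in $[m]^-$ together span, as products, the space of all bivariate polynomials $P(m,[m]^-)$.

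\textbf{Reduction to products.} First note that the set of weight functions $w:\mathbb{Z}\to\mathbb{C}$ for which $\sum_{m,n}w(m)a_{m,n}z^n$ is expressible by the listed functionals is a $\mathbb{C}$-linear space. Indeed, sums and scalar multiples of expressible quantities are expressible. It therefore suffices to treat a basis of $\mathbb{C}[X,Y]$ evaluated at $(m,[m]^-)$.

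\textbf{Choice of basis.} We use the products $m^{\underline{a}}\cdot([m]^-)^{\overline{b}}$ for $a,b\geq 0$. Here $m^{\underline{a}}=m(m-1)\cdots(m-a+1)$ is the falling factorial, and $y^{\overline{b}}=y(y+1)\cdots(y+b-1)$ is the rising factorial. Since $X^{\underline{a}}$ ($a\ge0$) is a basis of $\mathbb{C}[X]$ and $Y^{\overline{b}}$ ($b\ge0$) is a basis of $\mathbb{C}[Y]$, the products form a basis of $\mathbb{C}[X,Y]$.

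\textbf{Composing the two operations.} For a single basis element we compose the two constructions of \Cref{5.12}.
\begin{itemize}
\item By \Cref{5.12}(1), the series $f_a(u,z)=u^a\frac{\partial^a f}{\partial u^a}(u,z)$ has coefficients $m^{\underline{a}}a_{m,n}$. It is obtained from $f$ by $\frac{\partial}{\partial u}$ and multiplication by $u^a\in\mathbb{C}[u,u^{-1},(1-u)^{-1}]$.
\item If $b=0$, apply $|_{u=1}$ to $f_a$ to get $\sum_{m,n} m^{\underline{a}}a_{m,n}z^n$.
\item If $b\ge1$, apply \Cref{5.12}(2) to $f_a$ in place of $f$. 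This gives $\sum_n\sum_m ([m]^-)^{\overline{b}}m^{\underline{a}}a_{m,n}z^n=(-1)^b[u^0]\frac{\partial^b f_a}{\partial u^b}\cdot\frac{u}{1-u}$, which uses only the allowed functionals.
\end{itemize}

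\textbf{Legitimacy of the operations.} The step needing care is that each operation makes sense on $\mathbb{C}((u))((z))$.
\begin{itemize}
\item $\frac{\partial}{\partial u}$ and multiplication by $u^{\pm1}$ preserve $\mathbb{C}((u))((z))$.
\item Multiplication by $(1-u)^{-1}=\sum_{i\ge0}u^i$ is fine, since each $z$-coefficient is a Laurent series in $u$ bounded below.
\item $[u^0]$ is well defined, since each $z^n$-coefficient is a Laurent series in $u$.
\item Evaluation $|_{u=1}$ is the problematic operation. It only makes sense when the $z^n$-coefficients are Laurent polynomials in $u$, which is the case in our application: there $f$ comes from finitely supported module classes. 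I would state this as a standing assumption, implicit already in \Cref{5.12}(1)-type uses such as the lemma preceding \Cref{5.12}.
\end{itemize}

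\textbf{The $([m]^+,[m]^-)$ variant.} We have $[m]^+=m+[m]^-$. Hence $Q([m]^+,[m]^-)=Q(m+[m]^-,[m]^-)=P(m,[m]^-)$ with $P(X,Y)=Q(X+Y,Y)$ a polynomial, and the first part applies.
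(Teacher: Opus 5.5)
Your proof is correct, and it is the deduction the paper intends. The paper gives this lemma no separate proof and treats it as an immediate consequence of Lemma 5.12. Your argument supplies that consequence: use the product basis $m^{\underline{a}}([m]^-)^{\overline{b}}$, apply part (1) to get $f_a=u^a\partial_u^a f$, then apply part (2) to $f_a$ (or $|_{u=1}$ when $b=0$), and handle the second form via $[m]^+=m+[m]^-$.

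Your caveat is also right: $|_{u=1}$ needs the $z^n$-coefficients to be Laurent polynomials in $u$. The paper assumes this tacitly, already in the lemma just before Lemma 5.12, and it holds for the series $f_j=v(1-zA)^{-1}v_j^T$ to which the lemma is applied.
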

\begin{lemma}\label[lemma]{5.14}
Let $f(u,z)=\sum_{m,n}a_{m,n}u^mz^n$. Let $Q \in \mathbb{N}$ be a positive integer and $0 \leq s <Q$ be an integer. Let $\xi_Q$ be the $Q$-th root of unity. Denote
$$[u^{Q\mathbb{Z}+s}]f(u,z)=\sum_{m\equiv s \operatorname{mod}Q,n}a_{m,n}u^mz^n.$$
Then
$$[u^{Q\mathbb{Z}+s}]f(u,z)=\frac{1}{Q}\sum_{0 \leq i \leq Q-1}\xi^{-is}_Qf(\xi^i_Qu,z).$$
\end{lemma}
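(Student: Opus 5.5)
The plan is to prove \Cref{5.14} by the standard roots-of-unity filter, applied coefficientwise in $z$. Both sides are linear in $f$. Both operations also act on each monomial $a_{m,n}u^mz^n$ separately: the left side simply selects monomials, and the substitution $u\mapsto \xi_Q^iu$ multiplies $u^mz^n$ by the scalar $\xi_Q^{im}$. So it suffices to check the identity on a single monomial $u^mz^n$.

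First I would make sure the substitution is legitimate in $\mathbb{C}((u))((z))$ (or whatever ring of formal sums $f$ lives in). The map $u\mapsto \xi_Q^i u$ rescales each coefficient $a_{m,n}$ by a nonzero constant and does not move any monomial. Hence it preserves the support conditions defining these rings, namely boundedness from below in each column and uniform boundedness from the left. It is also a well-defined ring automorphism, compatible with the infinite sums. The finite sum over $i$ is then unproblematic.

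Next, for a monomial $u^mz^n$ the right side equals
$$\frac{1}{Q}\sum_{0\leq i\leq Q-1}\xi_Q^{-is}\xi_Q^{im}u^mz^n=\Big(\frac{1}{Q}\sum_{0\leq i\leq Q-1}\xi_Q^{i(m-s)}\Big)u^mz^n.$$
The key identity is that $\sum_{i=0}^{Q-1}\omega^i$ equals $Q$ if $\omega=1$ and $0$ otherwise, for $\omega=\xi_Q^{m-s}$ a $Q$-th root of unity. The case $\omega\neq 1$ follows from the geometric series $(\omega^Q-1)/(\omega-1)=0$. Since $\xi_Q$ is a primitive $Q$-th root of unity, $\omega=1$ exactly when $m\equiv s \bmod Q$. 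So the bracket is $1$ or $0$ according to whether $m\equiv s\bmod Q$, which is precisely the monomial selection defining $[u^{Q\mathbb{Z}+s}]$.

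Summing over all $(m,n)$ gives the claim, because both sides are defined coefficientwise. The only point needing any care, and it is minor, is the implicit assumption that $\xi_Q$ denotes a \emph{primitive} $Q$-th root of unity. I would state that explicitly, together with the remark that substitution commutes with the formal infinite sum.
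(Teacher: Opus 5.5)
Your proof is correct. The paper states this lemma without proof, and your monomial-by-monomial roots-of-unity filter is exactly the standard argument it implicitly relies on. Your remarks that $\xi_Q$ must be a primitive $Q$-th root of unity, and that $u\mapsto\xi_Q^i u$ is a ring automorphism of $\mathbb{C}((u))((z))$ compatible with the formal sums, are the right points to make explicit.
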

Let $M$ be an $\Omega$-algebraic module such that $\operatorname{core}M^{\otimes n}$ lies in the $\Omega^{\pm}$-orbits of $M_1,\ldots,M_r$ for any $n \geq 1$. Let $V$ be a free $\mathbb{C}[u,u^{-1}]$-module with basis $v_1,\ldots,v_r$ which are row vectors. Let $A=(a_{ij}) \in \operatorname{Mat}_{r}(\mathbb{C}[u,u^{-1}])$ be the following matrix: if in the stable category
$$M_i\otimes M=\oplus_{j,m}\Omega^m(M_j)^{\oplus c_{ijm}},$$
then
$$a_{ij}=\sum_{m}c_{ijm}u^m.$$
In this case, the representation group of $\Omega$-twists of $M_1,\ldots,M_r$ corresponds to $\mathbb{Z}[u,u^{-1}]$-module with basis $v_1,\ldots,v_r$ under
$$[\Omega^i(M_j)] \to u^iv_j,$$
and the action of $\otimes [M]$ becomes right multiplication by $A$ on the row vectors.

We choose a module $N$ such that $v=[N] \in V$. For any $n \geq 0$, $[N\otimes M^{\otimes n}]=vA^n=(vA^nv_1^T,\ldots,vA^nv_r^T)$. Fix $v$ here, we write
$$vA^nv_j^T=\sum_{m}c_{n,j,m}u^m,$$
then
$$N\otimes M^n=\oplus_{j,m} \Omega^m(M_j)^{c_{njm}}.$$
We see for each $j$, $\ell(\Omega^m(M_j))$ is a quasi-polynomial. We fix a common period $Q$ for these polynomials. Then $\ell(\Omega^m(M_j))=P_{js}([m]^+,[m]^-)$ for some polynomial $P_{js}$, $|m|\gg 0$ and $m\equiv s \operatorname{mod} Q$. In other words.
$$\ell(\Omega^m(M_j))=P_{js}([m]^+,[m]^-)\mathbf{1}_{m\equiv s \operatorname{mod}Q}+\sum_{i \in S}R_{ij}\bm\delta_i(m),$$
where $\bm\delta_i(m)$ is the Kronecker symbol and $S$ is a finite set. So
\begin{align*}
\ell(\oplus_m \Omega^m(M_j)^{c_{njm}})=\sum_m c_{njm}P_{js}([m]^+,[m]^-)\mathbf{1}_{m\equiv s \operatorname{mod}Q}+\sum_{i,m}R_{ij}\bm\delta_{i}(m)c_{njm}\\
=\sum_m c_{njm}P_{js}([m]^+,[m]^-)\mathbf{1}_{m\equiv s \operatorname{mod}Q}+\sum_{i}R_{ij}c_{nji}.
\end{align*}
Therefore,
$$\ell(N\otimes M^n)=\sum_{j,m} c_{njm}P_{js}([m]^+,[m]^-)\mathbf{1}_{m\equiv s \operatorname{mod}Q}+\sum_{i,j}R_{ij}c_{nji}$$
and
\begin{equation}\label{equation 5.1}
\sum_{n \geq 0}\ell(N\otimes M^n)z^n=\sum_{n\geq 0,m \in \mathbb{Z},j}c_{njm}P_{js}([m]^+,[m]^-)\mathbf{1}_{m\equiv s \operatorname{mod}Q}z^n+\sum_{n \geq 0,i,j}R_{ij}c_{nji}z^n  
\end{equation}
such that
$$f_j(u,z)=\sum_{n \geq 0}vA^nv_j^T\cdot z^n=\sum_{n\geq 0,m \in \mathbb{Z}}c_{n,j,m}u^mz^n.$$
In particular,
$$f_j(u,z)=v(1-zA)^{-1}v_j^T=\frac{1}{\det(1-zA)}v(1-zA)^*v_j^T,$$
where $(1-zA)^*$ is the adjoint of $(1-zA)$.
\begin{theorem}
For any module $N$ which is a direct sum of $\Omega$-twists of $M_1,\ldots,M_r$, The series
$$f(z)=\sum_{n \geq 0}\ell(N\otimes M^n)z^n$$
is a linear combination of the composition of the following types of functionals
$$|_{u=1},\cdot g(u), g(u) \in \mathbb{C}[u,u^{-1},(1-u)^{-1}],\frac{\partial}{\partial u},[u^0]$$
applied to the bivariate functions
$$[u^{Q\mathbb{Z}+s}]f_j(u,z)=\frac{1}{Q}\sum_{0 \leq i \leq Q-1}\xi^{-is}_Qf_j(\xi^i_Qu,z),$$
where
$$f_j(u,z)=v(1-zA)^{-1}v_j^T=\frac{1}{\det(1-zA)}v(1-zA)^*v_j^T.$$
\end{theorem}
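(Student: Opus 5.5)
We start from \eqref{equation 5.1}, which writes the series as two groups of terms. The first is $\sum_{n,m,j}c_{njm}P_{js}([m]^+,[m]^-)\mathbf{1}_{m\equiv s \bmod Q}z^n$, summed over $j$ and over residues $s$. The second is the finite correction $\sum_{n,i,j}R_{ij}c_{nji}z^n$. We treat the two groups separately and check that each is obtained from the series $f_j(u,z)=\sum_{n,m}c_{njm}u^mz^n$ by the listed functionals.

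For the correction terms, note that $\sum_n c_{nji}z^n=[u^i]f_j(u,z)=[u^0]\bigl(u^{-i}f_j(u,z)\bigr)$. This is multiplication by $u^{-i}\in\mathbb{C}[u,u^{-1},(1-u)^{-1}]$ followed by $[u^0]$, and $S$ is finite, so these terms are a finite linear combination of allowed operations. To put them in the required form, replace $f_j$ by $\sum_{s}[u^{Q\mathbb{Z}+s}]f_j$, the sum over all residues $s$ modulo $Q$; by \Cref{5.14} this sum equals $f_j$.

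For the main terms, fix $j$ and $s$. Multiplying by $\mathbf{1}_{m\equiv s\bmod Q}$ amounts to replacing $f_j$ by $g_{js}:=[u^{Q\mathbb{Z}+s}]f_j(u,z)=\sum_{m\equiv s,\,n}c_{njm}u^mz^n$, and \Cref{5.14} gives the formula $\frac1Q\sum_i\xi_Q^{-is}f_j(\xi_Q^iu,z)$. The term is then $\sum_{m,n}P_{js}([m]^+,[m]^-)\cdot(\text{coefficient of }u^mz^n\text{ in }g_{js})\,z^n$. By the last assertion of \Cref{5.13}, this is a linear combination of compositions of $|_{u=1}$, multiplication by $g(u)$, $\partial/\partial u$ and $[u^0]$ applied to $g_{js}$. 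Summing over $j$ and $s$ gives the claim. The closed form $f_j=v(1-zA)^{-1}v_j^T=\det(1-zA)^{-1}v(1-zA)^*v_j^T$ comes from summing the geometric series $\sum_n z^nA^n$ in $\operatorname{Mat}_r(\mathbb{C}[u,u^{-1}][[z]])$ and applying Cramer's rule.

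The main obstacle is making sure every operation is well defined on the rings involved. Since $c_{njm}\neq0$ only for $|m|\le Cn$, each $f_j$ lies in $\mathbb{C}[u,u^{-1}][[z]]$: every $z^n$-coefficient is a Laurent polynomial in $u$. So evaluation at $u=1$ and $\partial/\partial u$ make sense coefficientwise in $z$. Multiplication by $(1-u)^{-1}$ has to be read as the expansion $\sum_{i\ge0}u^i$, exactly as in \Cref{5.12}; it is applied before $[u^0]$, and the product of a Laurent polynomial with $\sum_{i\ge0}u^i$ has a well-defined $u^0$-coefficient. We also need the expression in \Cref{5.13} to be exact, not only asymptotic. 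The exceptional values of $m$, where $\ell(\Omega^m(M_j))$ differs from the quasi-polynomial $P_{js}$, have already been moved into the finite correction $\sum R_{ij}\bm\delta_i(m)$, so the identity holds coefficient by coefficient.
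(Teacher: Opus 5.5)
Your proposal is correct and follows the paper's own proof. Like the paper, you split \Cref{equation 5.1} into the quasi-polynomial part, handled by \Cref{5.14} followed by \Cref{5.12} and \Cref{5.13}, and the finite correction $\sum R_{ij}c_{nji}z^n$, handled as $[u^0]\bigl(u^{-i}f_j\bigr)$; your extra checks (that $f_j\in\mathbb{C}[u,u^{-1}][[z]]$, that $(1-u)^{-1}$ is expanded in nonnegative powers of $u$ before taking $[u^0]$, and that $f_j=\sum_s[u^{Q\mathbb{Z}+s}]f_j$ puts the correction terms in the stated form) are sound and fill in details the paper leaves implicit.
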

\begin{proof}
$f(z)$ is given by the sum in \Cref{equation 5.1}. The first part of the sum satisfies the condition by \Cref{5.12}, \Cref{5.13} and \Cref{5.14}, and the second part of the sum $\sum_{n \geq 0,i,j}R_{ij}c_{nji}z^n$ is a linear combination of $[u^i]f_j(u,z)=[u^0]u^{-i}f_j(u,z)$.    
\end{proof}
Here we remark that $A$ is a matrix of Laurent series in $u$; so $\det(1-zA)$ is also a Laurent series in $u$ with coefficients in $\mathbb{C}(z)$. Now we give a bound on the degree of the core series over $\mathbb{C}(z)$.
\begin{theorem}\label{5.16}
Let $M,N$ be as above, and set $h(u,z)=\det(1-zA)$, which can be viewed as a Laurent polynomial in $u$. Assume all small roots of $h(u,z)$ are $\rho_{1}(z),\ldots,\rho_{r}(z)$. Then
$$\sum_{n \geq 0}\ell(N\otimes M^n)z^n \in \mathbb{C}(z,\rho_{i}(z)|1 \leq i \leq r).$$
If $h(u,z)=u^{-\nu}h_1(u,z)$ such that $h_1(u,z)$ is a polynomial in $u$ which is not divisible by $u$, $\deg_u h_1(u,z)=D$, then
$$[\mathbb{C}(z,f(z)):\mathbb{C}(z)]\leq D!.$$
If moreover $A$ is a $1*1$ matrix, then 
$$[\mathbb{C}(z,f(z)):\mathbb{C}(z)]\leq D(D-1)\ldots (D-\nu+1).$$
Also, the above holds for the core series of $M$.
\end{theorem}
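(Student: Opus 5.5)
The plan is to start from the preceding theorem, which writes $f(z)=\sum_{n\ge0}\ell(N\otimes M^n)z^n$ as a linear combination of compositions of the functionals $|_{u=1}$, multiplication by $g(u)\in\mathbb{C}[u,u^{-1},(1-u)^{-1}]$, $\partial/\partial u$ and $[u^0]$. These are applied to $[u^{Q\mathbb{Z}+s}]f_j(u,z)$, where $f_j=v(1-zA)^*v_j^T/h(u,z)$. I will track the field of definition through each operation.

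First, each $f_j(\xi_Q^iu,z)$ is rational in $u$ and $z$. Its denominator is $h(\xi_Q^iu,z)$, whose small roots are $\xi_Q^{-i}\rho_k(z)$. Multiplying by $g(u)$ or differentiating in $u$ preserves rationality and adds no new small poles: the possible new poles are at $u=0$ and $u=1$, and $u=1$ is a big root of order $0$.

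Second, I handle $[u^0]=[u^{-1}]\circ(u^{-1}\cdot)$ using \Cref{5.10}. Write $F=G/H$ with $H\in\mathbb{C}(z)[u]$, after clearing $u^{-\nu}$ and powers of $u$ and $1-u$. Big roots contribute elements of $\mathbb{C}[[u,z^{1/d}]]$ and so drop out. The result is a sum of residues at the small roots: $u=0$ and the $\xi_Q^{-i}\rho_k$. By the partial fraction proposition, these residues lie in $\mathbb{C}(z,\rho_k,\xi_Q)$, so $\xi_Q$ appears as a constant. Evaluation at $u=1$ keeps us in $\mathbb{C}(z,\rho_1,\dots,\rho_r)$. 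A small point to check is that the root $u=0$ gives a residue rational in $z$; this is clear from a Laurent expansion in $u$ with coefficients in $\mathbb{C}(z)$. The roots of unity are harmless because the final sum is Galois-invariant under $\xi_Q$, or alternatively the $\xi_Q$-averaging simply reindexes. This proves the first claim.

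For the degree bound, note that the small roots are among the $D$ roots of $h_1(u,z)\in\mathbb{C}(z)[u]$. Their compositum lies in the splitting field of $h_1$, which has degree at most $D!$, and hence $[\mathbb{C}(z,f):\mathbb{C}(z)]\le D!$.

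The $1\times1$ case is the main obstacle. Here $h=1-zA(u)$ with $A=\sum c_mu^m$. The small roots are exactly those $\rho$ for which $zA(\rho)=1$ with $\operatorname{ord}\rho>0$; by a Newton polygon count, the lowest term $c_{-\nu}u^{-\nu}z$ forces there to be exactly $\nu$ of them. Moreover, $f$ is a symmetric expression in these $\nu$ small roots: the residues at the $\rho_k$ are all given by the same rational formula $R(\rho_k,z)$, so $f$ is a sum $\sum_k R(\rho_k,z)$ plus rational terms. Any Galois element maps the small roots to small roots, because it acts on Puiseux series preserving valuations. Hence $f$ lies in the fixed field of the stabilizer of the set $\{\rho_1,\dots,\rho_\nu\}$. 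I would then bound the degree by the number of ordered $\nu$-tuples of distinct roots, $D(D-1)\cdots(D-\nu+1)$: the field $\mathbb{C}(z,\rho_1,\dots,\rho_\nu)$ has degree at most this, obtained by adjoining the roots one at a time.

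Finally, the core series is the case $N=k$. The constant $n=0$ term and the removal of projective summands are already accounted for, since $\Lambda$-bookkeeping happens in the stable category and $\ell$ measures $\operatorname{core}$.
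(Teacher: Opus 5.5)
Your proposal is correct and follows essentially the same route as the paper. You track the possible poles through the functionals of the preceding theorem, use the formal residue theorem and the partial-fraction proposition to land in $\mathbb{C}(z,\rho_1,\ldots,\rho_r)$, bound by the splitting field of $h_1$ to get $D!$, and in the $1\times 1$ case adjoin the $\nu$ small roots one at a time to get $D(D-1)\cdots(D-\nu+1)$.

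Two small caveats. First, your remark that every Galois element over $\mathbb{C}(z)$ sends small roots to small roots is false. It would force $f\in\mathbb{C}(z)$, contradicting the degree-$3$ example with $h_1=u-2z-zu^3$; fortunately you never actually use it. Second, $k$ need not be an $\Omega$-twist of the $M_i$, so for the core series either take $N=M$ and use $c_M(z)=1+z\sum_{n\ge0}\ell(M\otimes M^n)z^n$ as the paper does, or adjoin $k$ to the list of orbits, which leaves $\det(1-zA)$ unchanged.
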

\begin{proof}
We notice that $|_{u=1},\frac{\partial}{\partial u}$ does not change the irreducible factor of the denominator of the expression, $\cdot g(u)$ can only introduce a factor $1-u$ on the denominator, and $[u^0]$ just evaluates the residue at small roots of the denominator. So when we start with $f(z)$, the only possible poles are $0$, small roots of $h(\xi^j_Qu,z)$; if all small roots of $h(u,z)$ are $\rho_{1}(z),\ldots,\rho_{r}(z)$, then all small roots of $h(\xi^j_Qu,z)$ are $\xi^{-j}_Q\rho_{1}(z),\ldots,\xi^{-j}_Q\rho_{r}(z)$. Evaluating residue at $u=\xi_Q^j\rho_i(z)$ results in a series in $\mathbb{C}(z,\xi_Q^j\rho_i(z))$. So the resulting series lies in $\mathbb{C}(z,\rho_{i}(z)|1 \leq i \leq r)$ since $\xi_Q \in \mathbb{C}$. We see $\{\rho_{i}(z)\}$ is a subset of all roots of a polynomial of degree $D$, so their extension degree is at most $D!$, so $[\mathbb{C}(z,f(z)):\mathbb{C}(z)]\leq D!$. If the size of $A$ is $1*1$, $h(u,z)=u^{-\nu}h_1(u,z)$ such that $h_1(u,z)$ is a polynomial in $u$ which is not divisible by $u$, then $h_1(u)=u^\nu-z-zh_2(u)$ where $h_2(u) \in u\mathbb{C}[u]$. For such polynomial, it has $\nu$ small roots coming from one polynomial of degree $D$, so the degree of extension is at most $D(D-1)\ldots (D-\nu+1)$. For core series, we can take $M=N$ and note that
$$c_M(z)=1+z\sum_{n \geq 0}\ell(N\otimes M^n)z^n,$$
so the core series also satisfies the same degree bound.
\end{proof}
\begin{remark}
The result that $\Omega^+$-algebraic modules have rational series can be explained in the following way: in the settings of the above theorem $h(u,z)=\det(1-zA) \in 1+(u,z)\mathbb{C}[u,z]$, so $h(u,z)$ itself produces no poles. The only possible pole appearing will be $u=0$, so its residue at elements in $\mathbb{C}(u,z)$ will always be rational.   
\end{remark}
One might ask if the series is algebraic over smaller base field. Since core series have rational coefficients, this is true by the following lemma.
\begin{lemma}
If a series $f(z) \in \mathbb{C}[[z]]$ is algebraic over $\mathbb{C}(z)$ and lies in $\mathbb{Q}[[z]]$, then it is algebraic over $\mathbb{Q}(z)$ with same degree. In particular, this holds for core series.       
\end{lemma}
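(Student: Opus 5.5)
The plan is to treat algebraicity as a linear-algebra condition on the field of definition and then descend from $\mathbb{C}$ to $\mathbb{Q}$ through a $\mathbb{Q}$-linear projection. Let $f(z)=\sum_n a_n z^n\in\mathbb{Q}[[z]]$ be algebraic over $\mathbb{C}(z)$ of degree $d$. The set of polynomials $P(z,y)\in\mathbb{C}[z][y]$ with $P(z,f(z))=0$ is a nonzero ideal of $\mathbb{C}[z][y]$. Any such $P$ has $\deg_y P\le N$ and $\deg_z P\le N$ for some $N$. The coefficients of $P$, say $c_{ij}$ for $0\le i,j\le N$, satisfy the vanishing of every coefficient of $z^m$ in $\sum_{i,j}c_{ij}z^i f(z)^j$. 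Each such condition is a linear equation with coefficients in $\mathbb{Q}$, because $f^j\in\mathbb{Q}[[z]]$. So we get a homogeneous linear system, with countably many equations, in the finitely many unknowns $c_{ij}$, and all of its coefficients lie in $\mathbb{Q}$.

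The first key step is the standard fact that the solution space of a linear system defined over $\mathbb{Q}$ has a basis defined over $\mathbb{Q}$. More precisely, the $\mathbb{C}$-solution space equals $W\otimes_\mathbb{Q}\mathbb{C}$, where $W$ is the $\mathbb{Q}$-solution space. Only finitely many equations matter, since the rank stabilizes, and rank is unchanged by field extension. Applying this, a nonzero complex solution forces a nonzero rational solution. Hence $f$ is algebraic over $\mathbb{Q}(z)$.

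The second step is the degree comparison. Trivially $[\mathbb{C}(z,f):\mathbb{C}(z)]\le[\mathbb{Q}(z,f):\mathbb{Q}(z)]$, since a $\mathbb{Q}(z)$-minimal polynomial also annihilates $f$ over $\mathbb{C}(z)$. For the reverse inequality, take $N=d$, restrict to polynomials of $y$-degree at most $d$, and fix $\deg_z\le N'$ large enough that the minimal polynomial over $\mathbb{C}(z)$, with denominators cleared, appears. The $\mathbb{C}$-solution space is nonzero, so by step one there is a nonzero rational solution $P_0$. This $P_0$ annihilates $f$, is nonzero, and has $\deg_y P_0\le d$. Therefore $[\mathbb{Q}(z,f):\mathbb{Q}(z)]\le d$, and the two degrees are equal.

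The main subtlety is making sure $P_0$ is nonzero as a polynomial in $y$ after restricting the degree. This is automatic, because a nonzero vector of coefficients gives a nonzero polynomial. A second point to check is that a $\mathbb{Q}$-solution with $y$-degree $0$ cannot occur: such a polynomial would be a nonzero element of $\mathbb{Q}[z]$ vanishing, which is impossible. So $\deg_y P_0\ge1$, and the argument holds together.

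For the final assertion, the core series has coefficients $c^G_n(M)\in\mathbb{N}\subset\mathbb{Q}$. It is algebraic over $\mathbb{C}(z)$ by \Cref{5.16} in the $\Omega$-algebraic case. Applying the lemma then gives algebraicity over $\mathbb{Q}(z)$ with the same degree.
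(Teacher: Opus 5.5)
Your proof is correct, but it reaches algebraicity over $\mathbb{Q}(z)$ by a different route from the paper. The paper argues in two separate steps.
\begin{enumerate}
\item It spreads out and specializes. The coefficients of a complex relation $P(z,f(z))=0$ lie in a finitely generated $\mathbb{Q}$-algebra $R_0$. Reducing modulo a maximal ideal $\mathfrak{m}_0$, chosen so that $\bar P\neq 0$, gives a relation over a number field. Hence $f$ is algebraic over $\overline{\mathbb{Q}}(z)$, and therefore over $\mathbb{Q}(z)$.
\item Only then does it compare degrees. It notes that both degrees are determined by the vanishing or nonvanishing of minors of the infinite rational matrix built from the coefficients of $z^jf^i$.
\end{enumerate}

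You instead use one fact: the $\mathbb{C}$-solution space of a homogeneous linear system with rational coefficients is $W\otimes_{\mathbb{Q}}\mathbb{C}$. You apply it to relations of $y$-degree at most $d$. This gives algebraicity over $\mathbb{Q}(z)$ and the bound $[\mathbb{Q}(z,f):\mathbb{Q}(z)]\le d$ at the same time, so the specialization step is unnecessary. Your degree argument is essentially a precise version of the paper's brief remark about determinants.

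Your approach is also more elementary. It does not need a maximal ideal of $R_0$ avoiding a given nonzero coefficient, whose existence relies on the Jacobson property of finitely generated $\mathbb{Q}$-algebras. The specialization technique is the more robust tool when the condition to be descended is nonlinear in the unknowns. Here, though, $f\in\mathbb{Q}[[z]]$ makes the relation linear in the coefficients of $P$ with rational coefficients, so your linear-algebra descent is the natural choice.
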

\begin{proof}
We see $f(z)$ is algebraic if and only if there is a nonzero polynomial $P(x,y) \in \mathbb{C}[x,y]$ such that $P(z,f(z))=0 \in \mathbb{C}[[z]]$. By finiteness of coefficients we may assume all coefficients lie in a finitely generated field over $\mathbb{Q}$, say $L$. We may assume $L=\mathbb{Q}(x_1,\ldots,x_r,\alpha)$ where $x_1,\ldots,x_r$ is transcendental and $\alpha$ is algebraic over $x_1,\ldots,x_r$. After clearing denominators, we may assume the coefficients lie in $R_0=\mathbb{Q}[x_1,\ldots,x_r,\alpha]$. So $R_0$ is a hypersurface ring. Map $R_0$ to $R_0/\mathfrak{m}_0=K$ for some maximal ideal $\mathfrak{m}_0$ such that the image $\bar{P}(x,y)$ of $P(x,y)$ is not zero, then $K$ is a subfield of $\overline{\mathbb{Q}}$ and $\bar{P}(z,f(z))=0$. This says $f(z)$ is algebraic over $\overline{\mathbb{Q}}(z)$, hence algebraic over $\mathbb{Q}(z)$.

Let $z^jf^i(z)=\sum_{m \geq 0}a_{ijm}z^m$. Let $\phi: \mathbb{N}^2 \to \mathbb{N}$ be any bijection between these two countable sets, and $A=(a_{\phi^{-1}(n)m})_{m,n \in \mathbb{N}}$ be an infinite matrix over $\mathbb{Q}$. We see the degrees of $f(z)$ over $\mathbb{C}(z)$ and $\mathbb{Q}(z)$ are both given by some vanishing and nonvanishing condition on certain determinants of $A$, so they agree.
\end{proof}

\subsection{Computations}
In this subsection, we will compute some concrete examples of core series of $\Omega$-algebraic modules.

\begin{example}
Let $G=V_4$ be the Klein 4-group, $M=\Omega^2(k)\oplus (\Omega^{-1}(k))^2$. We see
$$\ell(\Omega^n(k))=2|n|+1$$
has period $1$, and $M$ lies in a single orbit $\{\Omega^n(k)\}_{n \in \mathbb{Z}}$. Set
$$f(u,z)=\frac{1}{1-z(u^2+2u^{-1})}=\sum_{n,m}c_{nm}u^mz^n.$$
We have
$$\sum_n(\sum_{m}ma_{m,n})z^n=\frac{\partial f}{\partial u}(1,z),$$
$$\sum_n(\sum_{m}[m]^-a_{m,n})z^n=[u^0]f(u,z)u/(1-u)^2=[u^{-1}]f(u,z)/(1-u)^2.$$
So the core series
\begin{align}\label{equation 5.2}
\sum_{n \geq 0}\ell(\operatorname{core}(M^{\otimes n}))z^n=\sum_{n,m}(2|m|+1)c_{nm}z^n=\sum_{n,m}(4[m]^-+2m+1)c_{nm}z^n
\end{align}
\begin{align*}
=4[u^{-1}]f(u,z)/(1-u)^2+2\frac{\partial f}{\partial u}(1,z)+f(1,z).    
\end{align*}
We can write
$$f(u,z)=\frac{g(u,z)}{h(u,z)}=\frac{u}{u-2z-zu^3}.$$
Note that $u-2z-zu^3$ has only one small root $u=\rho(z)$ satisfying $\rho(z)=2z+O(z^2)$, and it has degree $3$ since $u-2z-zu^3$ is irreducible in $\mathbb{C}(u,z)$. It is simple since $u-2z-zu^3$ and $\frac{\partial}{\partial u}(u-2z-zu^3)=1-3zu^2$ are coprime. Here $u=1$ is not a small root. So
\begin{align*}
[u^{-1}]f(u,z)/(1-u)^2=\operatorname{Res}_{u=\rho(z)}f(u,z)/(1-u)^2\\
=\frac{g(u,z)}{h'_u(u,z)(1-u)^2}|_{u=\rho(z)}=\frac{\rho}{(1-3z\rho^2)(1-\rho)^2}.
\end{align*}
The computation of the rest terms of \Cref{equation 5.2} is straightforward, so we get
\begin{align}\label{equation 5.3}
\sum_{n \geq 0}\ell(\operatorname{core}(M^{\otimes n}))z^n=\frac{4\rho}{(1-3z\rho^2)(1-\rho)^2}+\frac{1}{1-3z}
\end{align}
\begin{align*}
=\frac{4\rho}{(1+12z)-(8+6z)\rho+(4-3z)\rho^2}+\frac{1}{1-3z}.     
\end{align*}
Here the first term in the last line of \Cref{equation 5.3} cannot lie in $\mathbb{C}(z)$, otherwise $\rho$ has degree $2$ which is impossible. Therefore, the degree of the core series is $3$.
\end{example}
\begin{example}
Let $G=V_4$ be the Klein 4-group, $M=(\Omega^3(k))^2\oplus (\Omega^{-2}(k))^3$. The above computation is the same if we take $$f(u,z)=\frac{1}{1-z(2u^3+3u^{-2})}=\frac{u^2}{u^2-3z-2zu^5}.$$
In this case $f(u,z)$ has two simple poles since $u^2-3z-2zu^5$ has two small roots $u \sim \pm\sqrt{3}z^{1/2}$. Denote these roots by $\rho_\pm(z)$. Then
$$\operatorname{Res}_{u=\rho_\pm(z)}f(u,z)/(1-u)^{2}=\frac{\rho_\pm(z)}{(2-10z\rho_\pm(z)^3)(1-\rho_\pm(z))^2}.$$
So the final result is
$$\frac{2\rho_+}{(1-5z\rho_+^3)(1-\rho_+)^2}+\frac{2\rho_-}{(1-5z\rho_-^3)(1-\rho_-)^2}+\frac{1}{1-5z}.$$
\end{example}
\begin{example}
In \Cref{5.3}, we have got the core series of $M=\Omega(k)\oplus \Omega^{-1}(k)$, which is    
$$c_M(z)=(1+2z)(2/(1-4z^2)+4/(1-4z^2)^{3/2}).$$
Let $N=\operatorname{core} M^{\otimes 3}=\Omega^{-3}(k)\oplus (\Omega^{-1}(k))^3\oplus(\Omega(k))^3\oplus \Omega^{3}(k)$. Then letting $\omega=e^{2\pi \sqrt{-1}/3}$, we have
\begin{align*}
c_N(z)=\frac{1}{3}(c_M(z^{1/3})+c_M(\omega z^{1/3})+c_M(\omega^2z^{1/3}))\\
=\frac{2+16z}{1-64z^2} +\frac{4}{3}( \frac{1+2z^{1/3}}{(1-4z^{2/3})^{3/2}} +\frac{1+2\omega z^{1/3}}{(1-4\omega^2 z^{2/3})^{3/2}} +\frac{1+2\omega^2 z^{1/3}}{(1-4\omega z^{2/3})^{3/2}}).
\end{align*}
Set $x=z^{1/3}$. There are extension of fields as below:

\begin{center}
\begin{tikzcd}
\mathbb{C}(x,\sqrt{1-4x^2},\sqrt{1-4\omega x^2},\sqrt{1-4\omega^2x^2}) \arrow[d, no head] \arrow[dr, no head] & \\
\mathbb{C}(x^3,\sqrt{1-4x^2},\sqrt{1-4\omega x^2},\sqrt{1-4\omega^2x^2}) \arrow[d, no head] & \mathbb{C}(x) \arrow[dl, no head] \\
\mathbb{C}(x^3) &
\end{tikzcd}
\end{center}

The Galois group of $\mathbb{C}(x,\sqrt{1-4x^2},\sqrt{1-4\omega x^2},\sqrt{1-4\omega^2x^2})/\mathbb{C}(x^3)$ is $(\mathbb{Z}/2\mathbb{Z})^3\rtimes \mathbb{Z}/3\mathbb{Z}$,
where $\mathbb{Z}/3\mathbb{Z}$ acts on $(\mathbb{Z}/2\mathbb{Z})^3$ via permutation of lower indices. Here each $\mathbb{Z}/2\mathbb{Z}$ changes a sign of $\sqrt{1-\omega^i4x^2}$ and $\mathbb{Z}/3\mathbb{Z}$ permutes these $3$ square roots. We see
$$c_N(z)=c_N(x^3)$$
is not stable under any $\mathbb{Z}/2\mathbb{Z}$-action, so the Galois group of 
$$\mathbb{C}(x,\sqrt{1-4x^2},\sqrt{1-4\omega x^2},\sqrt{1-4\omega^2x^2})/\mathbb{C}(x^3,c_N(x^3))$$
is equal to $\mathbb{Z}/3\mathbb{Z}$. Therefore,
$$[\mathbb{C}(x^3,c_N(x^3)):\mathbb{C}(x^3)]=8.$$
Therefore $\deg_{\mathbb{C}(z)}c_N(z)=8$.
\end{example}
\begin{example}\label[example]{5.22}
Now we compute the core series in \cite[Example 15.1]{BensonGamma} and \cite[Section 5, Example (4)]{CHU}. Let $G=\langle g,h\rangle=\mathbb{Z}/3\times \mathbb{Z}/3,k=\mathbb{F}_3$, $R=kG=k[t_1,t_2]/(t_1^3,t_2^3)$, $M$ be given by the following diagram, where each point represents a $k$-basis element of $M$; multiplication by $t_1$ sends a basis element to the one at its lower left, and multiplication by $t_2$ sends a basis element to the one at its lower right.

\begin{center}
\begin{tikzpicture}[thick, line cap=round, shorten >=4pt, shorten <=4pt]

\coordinate (A) at (0,0);
\coordinate (B) at (1,1);
\coordinate (C) at (2,0);
\coordinate (D) at (-1,1);
\coordinate (E) at (-2,0);
\coordinate (F) at (-1,-1);

\draw (A) -- (B);
\draw (A) -- (D);
\draw (A) -- (F);
\draw (B) -- (C);
\draw (D) -- (E);
\draw (E) -- (F);

\fill (A) circle (2pt);
\fill (B) circle (2pt);
\fill (C) circle (2pt);
\fill (D) circle (2pt);
\fill (E) circle (2pt);
\fill (F) circle (2pt);

\node at (D) [below=6pt] {$a$};
\node at (B) [below=6pt] {$b$};

\end{tikzpicture}    
\end{center}

In other words, 
$$M=Ra\oplus Rb/(t_1^2a,t_2^2a,t_2a-t_1b,t_2^2b).$$
Denote $c=(t_1t_2a)^*,d=(t_2b)^*$. $M^*$ is given by the reflection of the diagram which is
\begin{center}
\begin{tikzpicture}[thick, line cap=round, shorten >=4pt, shorten <=4pt]

\coordinate (A) at (0,0);
\coordinate (B) at (-1,-1);
\coordinate (C) at (-2,0);
\coordinate (D) at (1,-1);
\coordinate (E) at (2,0);
\coordinate (F) at (1,1);

\draw (A) -- (B);
\draw (A) -- (D);
\draw (A) -- (F);
\draw (B) -- (C);
\draw (D) -- (E);
\draw (E) -- (F);

\fill (A) circle (2pt);
\fill (B) circle (2pt);
\fill (C) circle (2pt);
\fill (D) circle (2pt);
\fill (E) circle (2pt);
\fill (F) circle (2pt);

\node at (C) [above=6pt] {$d$};
\node at (F) [below=6pt] {$c$};

\end{tikzpicture}    
\end{center}
In other words, 
$$M^*=Rc\oplus Rd/(t_2^2c,t_1^2c-t_2d,t_2^2d,t_1d).$$
We can take the pullbacks of $M,M^*$ along $S=k[t_1,t_2]/(t_1^3) \to R=k[t_1,t_2]/(t_1^3,t_2^3)$. Viewing them as $S$-modules, we have
$$M=Sa\oplus Sb/(t_1^2a,t_2^2a,t_2a-t_1b,t_2^2b),$$
$$M^*=Sc\oplus Sd/(t_2^2c,t_1^2c-t_2d,t_2^2d,t_1d).$$

Let $N=k\uparrow_{\langle g \rangle}^G$. Working in the stable category, we see the $\Omega$-orbit of the following $3$ modules is closed under taking $\otimes M$:
$$M,M^*,N$$
and by \cite{BensonGamma} the matrix for the action of $\otimes M$ is
$$A=\begin{pmatrix}
u & u^{-1} & 1\\
u^{-1} & u & 1\\
0 & 0 & 3u
\end{pmatrix}.$$
We have
$$\ell(\Omega^n_R(N))=\begin{cases}
3 & n\equiv 0 \operatorname{mod}2\\
6 & n\equiv 1 \operatorname{mod}2.
\end{cases}$$
Now we compute
$$\ell(\Omega^n(M))=\ell(\Omega^{-n}(M^*)),\ell(\Omega^n(M^*))=\ell(\Omega^{-n}(M))$$
for $n \in \mathbb{N}$.

We notice that $t_2^2M=0$. Therefore, letting $S=k[t_1,t_2]/(t_1^3)$, we have $R=S/(t_2^3)$ where $t_2^3\in (t_1,t_2)\operatorname{ann}_{S}M$. Therefore, by \cite[Proposition 3.3.5]{elias1998six}, we have an equation of Poincare series
$$\beta^{R}_M(z)=\beta^{S}_M(z)/(1-z^2).$$
We use Macaulay 2 to find the resolution of $M$ over $S_1$, which is
$$\ldots S^4\xrightarrow[]{\begin{pmatrix}
-t_2^2 & t_1^2 & 0 & 0\\
t_1 & 0 & 0 & 0\\
0 & -t_2 & t_1 & -t_2^2\\
t_2 & 0 & 0 & t_1^2
\end{pmatrix}}S^4$$
$$\xrightarrow[]{\begin{pmatrix}
0 & t_1^2 & 0 & 0\\
t_1 & t_2^2 & 0 & 0\\
t_2 & 0 & t_1^2 & t_2^2\\
0 & -t_2 & 0 & t_1
\end{pmatrix}}S^4 \xrightarrow[]{\begin{pmatrix}
-t_2^2 & t_1^2 & 0 & 0\\
t_1 & 0 & 0 & 0\\
0 & -t_2 & t_1 & -t_2^2\\
t_2 & 0 & 0 & t_1^2
\end{pmatrix}}S^4 \xrightarrow[]{\begin{pmatrix}
0 & -t_1^2 & 0 & 0\\
t_1 & t_2^2 & 0 & 0\\
t_2 & 0 & t_1^2 & t_2^2\\
0 & -t_2 & 0 & t_1
\end{pmatrix}}S^4 $$
$$\xrightarrow[]{\begin{pmatrix}
0 & -t_2 & t_1 & -t_2^2\\
-t_2 & 0 & 0 & t_1^2\\
t_2^2 & t_1^2 & 0 & 0\\
t_1 & 0 & 0 & 0
\end{pmatrix}}S^4  \xrightarrow[]{\begin{pmatrix}
t_1^2 & t_2^2 & t_2 & 0\\
0 & 0 & -t_1 & t_2^2
\end{pmatrix}}S^2 \xrightarrow[]{(a,b)} M \to 0$$
where it becomes periodic afterwards. 
Similarly $M^*$ is annihilated by $t_2^2$, and the minimal resolution of $M^*$ over $S$ is given by
$$\ldots S^4\xrightarrow[]{\begin{pmatrix}
0 & t_1^2 & 0 & t_2\\
t_1 & t_2^2 & -t_2 & 0\\
0 & 0 & t_1^2 & t_2^2\\
0 & 0 & 0 & t_1
\end{pmatrix}}S^4 \xrightarrow[]{\begin{pmatrix}
-t_2^2 & t_1^2 & t_2 & 0\\
t_1 & 0 & 0 & -t_2\\
0 & 0 & t_1 & -t_2^2\\
0 & 0 & 0 & t_1^2
\end{pmatrix}}S^4 \xrightarrow[]{\begin{pmatrix}
0 & t_1^2 & 0 & t_2\\
t_1 & t_2^2 & -t_2 & 0\\
0 & 0 & t_1^2 & t_2^2\\
0 & 0 & 0 & t_1
\end{pmatrix}}S^4 $$
$$\xrightarrow[]{\begin{pmatrix}
-t_2^2 & t_1^2 & t_2 & 0\\
0 & 0 & 0 & t_1^2\\
t_1 & 0 & 0 & -t_2\\
0 & 0 & t_1 & -t_2^2
\end{pmatrix}}S^4  \xrightarrow[]{\begin{pmatrix}
0 & t_2^2 & 0 & t_1^2\\
t_1 & 0 & t_2^2 & -t_2
\end{pmatrix}}S^2 \xrightarrow[]{(c,d)} M \to 0.$$
Therefore,
$$\beta^{S}_M(z)=\beta^{S}_{M^*}(z)=2+4z+4z^2+4z^3+\ldots,$$
$$\beta^{R}_M(z)=\beta^{R}_{M^*}(z)=2+4z+6z^2+8z^3+\ldots,$$
$$\gamma^{R}_M(z)=\gamma^{R}_{M^*}(z)=2+2z+4z^2+4z^3+\ldots.$$
In other words,
$$\gamma^R_n(M)=\gamma^R_n(M^*)=\begin{cases}
n+2 & n=2n_0,n_0 \geq 0\\
n+1 & n=2n_0+1,n_0 \geq 0.
\end{cases}$$
So
\begin{align*}
\ell(\Omega^n_R(M))=\ell(\Omega^n_R(M^*))=\begin{cases}
9n+6 & n=2n_0,n_0 \geq 0\\
9n+3 & n=2n_0+1,n_0\geq 0.
\end{cases}
\end{align*}
Equivalently, for $n \in \mathbb{Z}$,
$$\ell(\Omega^n(M))=\ell(\Omega^{-n}(M^*))=9|n|+\frac{9}{2}+\frac{3}{2}(-1)^n.$$
Set $\ell_1(n)=\ell(\Omega^n(M)),\ell_2(n)=\ell(\Omega^n(M^*))=\ell_1(n),\ell_3(n)=\ell(\Omega^n(N))$.
We compute
$$\det(1-zA)=u^{-2}(1-3uz)(u-z-zu^{2})(u+z-zu^{2}).$$
We have
$$(1-zA)^*=\begin{pmatrix}
1-4uz+3u^2z^2 & zu^{-1}-3z^2 & z+z^2(u^{-1}-u)\\
zu^{-1}-3z^2 & 1-4uz+3u^2z^2 & z+z^2(u^{-1}-u)\\
0 & 0 & 1-2uz+(u^2-u^{-2})z^2
\end{pmatrix}.$$
Let $v=(1,0,0)$, then
$$v(1-zA)^*v_1^T=1-4uz+3u^2z^2,$$
$$v^T(1-zA)^*v_2^T=zu^{-1}-3z^2,$$
$$v(1-zA)^*v_3^T=z+z^2(u^{-1}-u),$$
$$f_i(u,z)=\frac{1}{\det(1-zA)}v(1-zA)^*v_i^T=\sum_{i,m,n}a_{i,m,n}u^mz^n,i=1,2,3.$$
Then the core series is given by
$$c_M(z)=1+zc_1(z),c_1(z)=\sum_{n \geq 0}\ell(\operatorname{core}M^{\otimes (n+1)})z^n$$
and
$$c_1(z)=\sum_{1 \leq i \leq 3,m,n}\ell_i(m)a_{i,m,n}z^n.$$
Now we proceed with computation. We have

\[
f_1(u,z)=\frac{1-4uz+3u^2z^2}{\det(1-zA)}
=\frac{u^2(1-uz)}{(u-z-zu^2)(u+z-zu^2)},
\]
\[
f_2(u,z)=\frac{zu^{-1}-3z^2}{\det(1-zA)}
=\frac{uz}{(u-z-zu^2)(u+z-zu^2)},
\]
and
\[
f_3(u,z)=\frac{z+z^2(u^{-1}-u)}{\det(1-zA)}
=\frac{uz}{(1-3uz)(u-z-zu^2)}.
\]

Now set
\[
J(u,z):=\frac{1}{1-z(u+u^{-1})}=\frac{u}{u-z-zu^2}=f_1+f_2,
\]
\begin{align*}
c_1(z)=\sum_m \ell_1(m)[u^m]f_1+\sum_m \ell_2(m)[u^m]f_2+\sum_m \ell_3(m)[u^m]f_3\\
=
\sum_m \ell_1(m)[u^m]J+\sum_m \ell_3(m)[u^m]f_3.
\end{align*}
Instead of applying formal Cauchy's residue theorem, we will try to expand $J$ as series in $u,z$. Let
\[
s=\sqrt{1-4z^2},
\qquad
\alpha=\frac{1-s}{2z}=z+z^3+2z^5+\cdots .
\]
Then $\alpha,\alpha^{-1}$ are two roots of $-zu^2+u-z=0$ and $\alpha^{-1}-\alpha=s/z$. So
\begin{align*}
J(u,z)=\frac{u}{-z(u-\alpha)(u-\alpha^{-1})}=\frac{u(\alpha^{-1}-\alpha)}{-s(u-\alpha)(u-\alpha^{-1})}\\
=\frac{u}{s}(\frac{1}{u-\alpha}-\frac{1}{u-\alpha^{-1}})=\frac{1}{s}(\frac{\alpha u}{1-\alpha u}+\frac{1}{1-\alpha u^{-1}}).
\end{align*}
Since \(\alpha\in z\mathbb C[[z]]\), in \(\mathbb C((u))((z))\) we have
\[
J(u,z)=\frac1s\sum_{m\in\mathbb Z}\alpha^{|m|}u^m.
\]
Hence
\[
J(1,z)=\sum_m [u^m]J=\frac{1}{1-2z},J(-1,z)=\sum_m (-1)^m[u^m]J=\frac{1}{1+2z},
\]
and
\[
\sum_m |m|[u^m]J
=
\frac{2}{s}\sum_{m\ge1}m\alpha^m
=
\frac{2\alpha}{s(1-\alpha)^2}
=
\frac{2z}{(1-2z)\sqrt{1-4z^2}}.
\]
Therefore,
\begin{align*}
\sum_m \ell_1(m)[u^m]J=\frac{9J(1,z)}{2}+\frac{3J(-1,z)}{2}+9\sum_m |m|[u^m]J\\
=\frac{9}{2(1-2z)}+\frac{3}{2(1+2z)}+\frac{18z}{(1-2z)\sqrt{1-4z^2}}\\
=\frac{6(1+z)}{(1-4z^2)}+\frac{18z}{(1-2z)\sqrt{1-4z^2}}.
\end{align*}
Since
\[
\ell_3(m)=
\begin{cases}
3,& m\equiv 0\pmod 2\\
6,& m\equiv 1\pmod 2
\end{cases}=\frac92-\frac32(-1)^m,
\]

\[
\sum_m \ell_3(m)[u^m]f_3
=
\frac92 f_3(1,z)-\frac32 f_3(-1,z)=
\frac{3z(1+10z+6z^2)}{(1-4z^2)(1-9z^2)}.
\]

Combining all equations above, we get
\begin{align*}
c_1(z)
=
\left(
\frac{6(1+z)}{(1-4z^2)}+\frac{18z}{(1-2z)\sqrt{1-4z^2}}
\right)
+
\frac{3z(1+10z+6z^2)}{(1-4z^2)(1-9z^2)}\\
=\frac{3(2+3z-8z^2-12z^3)}{(1-4z^2)(1-9z^2)}
+\frac{18z}{(1-2z)\sqrt{1-4z^2}}
\end{align*}
and $c_M(z)=1+zc_1(z)$. We see $\deg_{\mathbb
{C}(z)}c_M(z)=2$.
\end{example}

\section{Iterated Syzygies}
In the previous section, we focus on $\Omega$-algebraic modules. In this section, we compute a core-series of an iterated syzygy, which is not $\Omega$-algebraic.

\textbf{Settings.} In this section, let $H=(\mathbb{Z}/p^{e_1})g_1\times (\mathbb{Z}/p^{e_2})g_2\times\ldots\times(\mathbb{Z}/p^{e_r})g_r$ for some $e_1,e_2,\ldots,e_r\geq 1,r\geq 2$, $R=kH=k[t_1,\ldots,t_r]/(t_1^{p^{e_1}},\ldots,t_r^{p^{e_r}})$ where $t_i=g_i-1$. Let $G=(\mathbb{Z}/p^{f_1})g_1\times (\mathbb{Z}/p^{f_2})g_2\times\ldots\times(\mathbb{Z}/p^{f_r})g_r$ for some $f_i>e_i$, $S=kG=k[t_1,\ldots,t_r]/(t_1^{p^{f_1}},\ldots,t_r^{p^{f_r}})$ where $t_i=g_i-1$. There is a natural surjection $G \to H$ sending $g_i$ to $g_i$ which makes $R$ a quotient ring of $S$. So, any $R$-module is naturally an $S$-module. On the level of representations this identification is just the inflation of representations, and we will omit the inflation symbol here. In this sense, the following symbol
$$\Omega^j_S\Omega^i_R(k)$$
makes sense as an $S$-module for any $i,j \in \mathbb{Z}$. Let $P=k[t_1,\ldots,t_r]$ be a polynomial ring in $t_i$'s, which surjects onto $S$ and onto $R$. 

We see indecomposability is preserved under inflation and taking syzygies. Therefore, $\Omega^j_S\Omega^i_R(k)$ is indecomposable for all $i,j$. Also, $\otimes_k$ commutes with inflations, so we do not distinguish between tensor products as $R$-modules and as $S$-modules.
\begin{theorem}\label{6.1}
The set of isomorphic classes of direct sums of modules in the following classes 
$$\Omega^j_S\Omega^i_R(k),\Omega^j_S(R),S$$
is closed under $\otimes_k$.
\end{theorem}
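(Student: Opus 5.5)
Since $\otimes_k$ is additive in each variable, it suffices to check that the tensor product of any two of the basic modules $\Omega^j_S\Omega^i_R(k)$, $\Omega^j_S(R)$, $S$ is again a direct sum of such modules. Products with $S$ are handled directly: for any $S$-module $X$, the module $X\otimes_k S$ is free of rank $\dim_k X$, so it is a sum of copies of $S$. This leaves products of two modules of the first two types. Here we cannot just argue in the stable category: we need genuine isomorphisms of modules, with the projective part identified as a sum of copies of $S$. The plan is to compute stably first and then pin down the exact decomposition using Krull--Schmidt.

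\textbf{Stable step.} Apply \Cref{4.8}(4) over $S$. This gives, stably over $S$,
$$\Omega^j_S(X)\otimes_k\Omega^{j'}_S(Y)\simeq\Omega^{j+j'}_S(X\otimes_k Y).$$
So it suffices to understand $X\otimes_kY$ for $X,Y\in\{\Omega^i_R(k),R\}$, viewed as $S$-modules by inflation. Since tensor products commute with inflation, we can compute $X\otimes_k Y$ over $R$ and then inflate. Over $R$, \Cref{4.8} gives
$$\Omega^i_R(k)\otimes_k\Omega^{i'}_R(k)\cong\Omega^{i+i'}_R(k)\oplus R^{a}.$$
The cases involving $R$ are simpler: $\Omega^i_R(k)\otimes R\cong R^{\dim}$ and $R\otimes R\cong R^{|H|}$. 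Here the free $R$-modules are honest direct summands, not merely stable ones, since $R$ is self-injective. Inflating to $S$ and applying $\Omega^{j+j'}_S$ (which commutes with finite direct sums), we obtain that the product is stably isomorphic over $S$ to
$$\Omega^{j+j'}_S\Omega^{i+i'}_R(k)\oplus \Omega^{j+j'}_S(R)^{a},$$
or to a sum of copies of $\Omega^{j+j'}_S(R)$ in the cases involving $R$.

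\textbf{Upgrading to an isomorphism.} Stable isomorphism means $X\oplus F\cong Y\oplus F'$ with $F,F'$ projective, hence free, since $S$ is local. By Krull--Schmidt, the non-projective parts of the two sides agree. So the actual tensor product is
$$\operatorname{core}\bigl(\Omega^{j+j'}_S\Omega^{i+i'}_R(k)\oplus\Omega^{j+j'}_S(R)^a\bigr)\oplus S^b$$
for some $b$. It remains to see that each listed summand is its own core, or else is projective and hence a sum of copies of $S$. For $\Omega^m_S(Z)$ with $m\neq 0$, this holds because syzygies and cosyzygies over a self-injective algebra have no projective summands by construction. For $m=0$, the module $\Omega^i_R(k)$ has no $S$-projective summand, since it is annihilated by the $t_\ell^{p^{e_\ell}}$, which act nonzero on $S$ because $f_\ell>e_\ell$. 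The same argument applies to $R$.

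\textbf{Main obstacle.} The main point needing care is \Cref{4.8}(2)--(4) over $S$ with $M$ an arbitrary module, particularly negative $n$ and the dimension bookkeeping. In the proof, Schanuel's lemma and induction handle this: the relevant short exact sequences are sequences of $S$-modules, and tensoring with $M$ preserves projectivity. For negative $n$, we use the dual sequences together with $\Omega^{-n}(M)=(\Omega^n(M^*))^*$ from \Cref{4.4}, noting that $k^*=k$ and $R^*\cong R$. Indecomposability of each $\Omega^j_S\Omega^i_R(k)$ is stated in the setup and is not needed beyond the no-projective-summand check above.
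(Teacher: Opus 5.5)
Your proof is correct and follows the paper's argument. You decompose $\Omega^{i}_R(k)\otimes_k\Omega^{i'}_R(k)$ and the products involving $R$ over $R$, inflate to $S$, and apply \Cref{4.8}(4) over $S$ to get the stable decomposition. You then pass to an honest decomposition by splitting off a free summand $S^t$. The one thing you add is an explicit justification of that last passage, via Krull--Schmidt and the check that $\Omega^m_S\Omega^i_R(k)$ and $\Omega^m_S(R)$ have no $S$-projective summands, which the paper's ``in other words'' step leaves implicit.
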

\begin{proof}
From \Cref{4.8} we see in the $R$-module category, 
$$\Omega^{i_1}_R(k)\otimes_k \Omega^{i_2}_R(k)=\Omega^{i_1+i_2}_R(k)\oplus R^s$$
for some $s \in \mathbb{N}$. Therefore, in the stable module category of $S$ we have
$$\Omega^{j_1}_S\Omega^{i_1}_R(k)\otimes_k \Omega^{j_2}_S\Omega^{i_2}_R(k)=\Omega^{j_1+j_2}_S\Omega^{i_1+i_2}_R(k)\oplus \Omega^{j_1+j_2}_S(R^s).$$
In other words, in the module category we have
$$\Omega^{j_1}_S\Omega^{i_1}_R(k)\otimes_k \Omega^{j_2}_S\Omega^{i_2}_R(k)=\Omega^{j_1+j_2}_S\Omega^{i_1+i_2}_R(k)\oplus \Omega^{j_1+j_2}_S(R^s)\oplus S^t$$
for some $s,t \in \mathbb{N}$. Similarly we have
$$\Omega^{j_1}_S\Omega^{i_1}_R(k)\otimes_k \Omega^{j_2}_S(R)=\Omega^{j_1+j_2}_S(R^{\ell(\Omega^{i_1}_R(k))})\oplus S^t$$
and
$$\Omega^{j_1}_S(R)\otimes_k \Omega^{j_2}_S(R)=\Omega^{j_1+j_2}_S(R^{\ell(R)})\oplus S^t$$
which concludes the proof.
\end{proof}
\begin{theorem}
The indecomposable modules listed in \Cref{6.1} are pairwise non-isomorphic.     
\end{theorem}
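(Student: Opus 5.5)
The indecomposables in \Cref{6.1} are $\Omega^j_S\Omega^i_R(k)$ for $i,j\in\mathbb{Z}$, $\Omega^j_S(R)$ for $j\in\mathbb{Z}$, and $S$. Here $S$ is the only projective one; all the others are non-projective. This is clear because $\Omega_S$ of a non-projective indecomposable is again non-projective indecomposable, and neither $\Omega^i_R(k)$ nor $R$ is $S$-projective, since $\dim_k < |G|$ for $f_i>e_i$. The plan is to strip off the outer $\Omega_S$ and then separate the remaining $R$-level modules by invariants that see the $R$-structure.

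\textbf{Step 1 (reduce to $j=0$).} Suppose two non-projective indecomposables of the form $\Omega^{j_1}_S(X_1)$ and $\Omega^{j_2}_S(X_2)$ are isomorphic, where each $X_\nu$ is either $\Omega^{i}_R(k)$ or $R$. Since $\Omega_S$ is an auto-equivalence of the stable category and is the identity on non-projective indecomposables up to isomorphism, we can apply $\Omega^{-j_2}_S$. This gives $\Omega^{j}_S(X_1)\cong X_2$ with $j=j_1-j_2$. The task is then to show $j=0$ and $X_1\cong X_2$.

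\textbf{Step 2 (use the annihilator / support to detect $j$).} Each $X_\nu$ is an $R$-module, so it is annihilated by $I=(t_1^{p^{e_1}},\dots,t_r^{p^{e_r}})S$. I will show that for $j\ne 0$, the module $\Omega^j_S(X)$ is \emph{not} annihilated by $I$; this forces $j=0$.

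For $j>0$, $\Omega^j_S(X)$ is a submodule of a free $S$-module, and it contains the socle of that free module. Indeed, $\Omega_S(X)\subseteq \mathfrak{m}F$ for a minimal cover $F$, and any nonzero submodule of a free $S$-module meets its socle. Take a minimal generator $x\in\Omega^j_S(X)\subset F$. I claim $t_1^{p^{e_1}}x\neq 0$. One possible argument: if $I\cdot\Omega^j_S X=0$, then $\Omega^j_S X\subseteq (0:_F I)=(\prod_i t_i^{p^{f_i}-p^{e_i}})F$. This would force every generator of $\Omega^j_SX$ to have huge Loewy depth, which is incompatible with the minimal resolution maps having entries in $\mathfrak{m}\setminus\mathfrak m^2$. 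Such entries occur because $X$ has generators whose syzygies involve linear relations like $t_i$ on $k$, and $t_ie_i$-type relations on $\Omega_R$.

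A cleaner route, which I would try first, uses complexity: $X$ is an $R$-module, and the restriction of $\Omega^j_S(X)$ to the subgroup $K=\ker(G\to H)$ distinguishes cases. $X|_K$ is trivial-action, hence a direct sum of copies of $k$. By contrast, $\Omega^j_S(X)|_K\cong \Omega^j_K(X|_K)\oplus(\text{proj})$, which for $j\neq0$ is a sum of $\Omega^j_K(k)$'s and is not annihilated by the augmentation ideal of $kK$ (as $\Omega^j_K(k)$ is not trivial for $j\ne 0$). Since $I$ contains $K$'s augmentation generators $g_i^{p^{e_i}}-1$, this proves $j=0$.

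\textbf{Step 3 (separate the $R$-modules).} Now assume $X_1\cong X_2$ as $S$-modules with both $X_\nu$ $R$-modules. Then they are isomorphic as $R$-modules. Among $\{\Omega^i_R(k)\}_{i\in\mathbb Z}\cup\{R\}$, $R$ is the only $R$-projective module. The $\Omega^i_R(k)$ are pairwise non-isomorphic by dimension: \Cref{4.7} shows $\dim$ grows with $|i|$, and $i$ versus $-i$ is separated by the number of generators versus socle dimension via \Cref{4.4}(4), since $\mu(\Omega^i(k))=\beta_i$ but $\mu(\Omega^{-i}(k))=\beta_{i-1}$, which differ for $r\ge2$. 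Finally, $S$ differs from all the others by projectivity.

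The main obstacle is Step 2: rigorously justifying $\Omega^j_S(X)|_K\simeq\Omega^j_K(X|_K)$ stably, which follows because restriction commutes with $\Omega$ and $kG$ is free over $kK$. One must also check that the resulting non-triviality survives, i.e.\ that a nonzero non-projective $\Omega^j_K(k)$-summand genuinely appears. That holds since $X|_K$ is nonzero and trivial, so it is non-projective over the nontrivial $p$-group $K$.
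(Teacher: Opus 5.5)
Your argument is correct in outline and takes a different route from the paper. The paper reduces to $\Omega^{j}_S\Omega^{i_1}_R(k)\cong\Omega^{i_2}_R(k)$ and then tensors with $\Omega^{-i_2}_R(k)$. This gives a stable isomorphism $\Omega^{j}_S\Omega^{i_1-i_2}_R(k)\oplus\Omega^{j}_S(R)^{s_1}\cong k\oplus R^{s_2}$, whose summands it matches according to whether $\ell(R)$ divides their dimensions. The same divisibility separates the family $\Omega^{j_1}_S\Omega^{i_1}_R(k)$ from the family $\Omega^{j_2}_S(R)$. Non-projectivity comes from the fact that an $S$-module of finite projective dimension is free.

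You instead detect the outer syzygy degree directly. The augmentation ideal of $kK$, with $K=\ker(G\to H)$, generates $I=(t_1^{p^{e_1}},\dots,t_r^{p^{e_r}})S$. So inflated modules restrict to trivial $kK$-modules, while $\Omega^j_S$ of them restrict to $\Omega^j_K(k)^{\oplus d}\oplus(\text{proj})$. This treats all three families uniformly. It also covers $\Omega^{j_1}_S(R)\cong\Omega^{j_2}_S(R)$, which the paper does not treat explicitly. Moreover, it supplies the justification the paper omits for its last step, ``$\Omega^j_S\Omega^{i}_R(k)\cong k$ implies $j=i=0$''. The paper's divisibility trick, for its part, is a cheaper way to tell the two families apart.

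Three justifications need repair.

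First, the claim $\dim_k\Omega^i_R(k)<|G|$ is false for large $|i|$. For $H=V_4$ and $G=\mathbb{Z}/4\times\mathbb{Z}/4$ one has $\dim_k\Omega^i_R(k)=2|i|+1>16$ once $|i|\geq 8$. Argue instead, as the paper does, that a nonzero module annihilated by $I\neq 0$ cannot be $S$-free.

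Second, the non-triviality of $\Omega^j_K(k)$ for $j\neq 0$ is the crux of your Step 2, yet you only assert it. It fails for cyclic $K$, where $\Omega^2_K(k)\cong k$. So your closing check, that a nonzero non-projective summand appears, is not the relevant point. What makes the claim true here is that every $f_i>e_i$, so $K\cong\prod_{i=1}^r\mathbb{Z}/p^{f_i-e_i}$ has rank $r\geq 2$. Then, by \Cref{4.4} and the Tate resolution, $\dim_k\Omega^j_K(k)=\dim_k\Omega^{|j|}_K(k)=|K|\gamma_{|j|-1}+(-1)^{|j|}\geq |K|-1\geq 3$ for $j\neq 0$, using $\gamma_m\geq 1$ for $m\geq 0$. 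Hence $\Omega^j_K(k)\not\cong k$.

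Third, \Cref{4.7} is only asymptotic and does not show that $\dim_k\Omega^i_R(k)$ determines $|i|$. It is simpler to note that $\Omega^{i_1}_R(k)\cong\Omega^{i_2}_R(k)$ gives $\Omega^{i_1-i_2}_R(k)\cong k$. The same dimension bound, now over $H$, which also has rank $r\geq 2$, rules this out unless $i_1=i_2$.

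With these repairs your proof is complete.
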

\begin{proof}
Assume
$$\Omega^{j_1}_S\Omega^{i_1}_R(k)\cong \Omega^{j_2}_S\Omega^{i_2}_R(k).$$
We claim this implies $j_1=j_2$ and $i_1=i_2$.
After applying $\Omega^{-j_2}_S$ on both sides, for $j=j_1-j_2$ we have
$$\Omega^{j}_S\Omega^{i_1}_R(k)\cong \Omega^{i_2}_R(k).$$
Now we take the tensor product with $\Omega^{-i_2}_R(k)$, then in stable category we have
$$\Omega^{j}_S\Omega^{i_1-i_2}_R(k)\oplus \Omega^{j}_S(R)^{s_1} \cong k\oplus R^{s_2}.$$
We see
$$\Omega^{j}_S\Omega^{i_1-i_2}_R(k)\cong k,\Omega^{j}_S(R) \cong R$$
since $\ell(R)$ does not divide the dimensions of the first two modules, but divides the dimensions of the last two modules. This implies $j=i_1-i_2=0$.

We have
$$\Omega^{j_1}_S\Omega^{i_1}_R(k)\not\cong \Omega^{j_2}_S(R)$$
since $\ell(R)$ divides the length of the right side, but does not divide the dimension of the left side. $\Omega^j_S\Omega^i_R(k),\Omega^j_S(R)$ is not projective over $S$ since any finitely generated $S$-module of finite projective dimension over $S$ is free over $S$, so is not an $R$-module.
\end{proof}
\begin{corollary}
If $i \neq 0$, then the $S$-module $\Omega^j_S\Omega^i_R(k)$ is not $\Omega$-algebraic.    
\end{corollary}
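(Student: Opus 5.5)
We argue directly from the definition of $\Omega$-algebraic, using \Cref{6.1} and the preceding theorem on pairwise non-isomorphism. Fix $i\neq 0$ and $j$, and set $M=\Omega^j_S\Omega^i_R(k)$.

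The first step computes the core of the tensor powers. Iterating the displayed formula in the proof of \Cref{6.1} gives, in the $S$-module category,
$$M^{\otimes n}=\Omega^{nj}_S\Omega^{ni}_R(k)\oplus \Omega^{nj}_S(R)^{s_n}\oplus S^{t_n}.$$
The summand $\Omega^{nj}_S\Omega^{ni}_R(k)$ is indecomposable, since inflation and syzygies preserve indecomposability. It is non-projective over $S$, as shown at the end of the previous proof: a module of finite projective dimension over $S$ is $S$-free, hence not an $R$-module, while $\Omega^{ni}_R(k)$ is non-projective over $R$ because $ni\neq 0$ up to the $\Omega_S$-shift. So for every $n\ge 1$ the module $\Omega^{nj}_S\Omega^{ni}_R(k)$ is a non-projective indecomposable summand of $M^{\otimes n}$. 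We must be careful that Krull--Schmidt identifies it as a summand of $\operatorname{core}(M^{\otimes n})$, which it does.

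The second step locates these summands in $\Omega_S$-orbits. The $\Omega_S^{\pm}$-orbit of $\Omega^{nj}_S\Omega^{ni}_R(k)$ is exactly $\{\Omega^{m}_S\Omega^{ni}_R(k): m\in\mathbb{Z}\}$. By the pairwise non-isomorphism theorem, $\Omega^{m}_S\Omega^{ni}_R(k)\cong \Omega^{m'}_S\Omega^{n'i}_R(k)$ forces $ni=n'i$, and since $i\neq 0$ this gives $n=n'$. Hence the summands for different $n$ lie in pairwise distinct orbits.

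Combining the two steps, the non-projective indecomposable summands of the modules $M^{\otimes n}$, $n\ge 1$, meet infinitely many $\Omega$-orbits, so $M$ is not $\Omega$-algebraic.

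The only delicate point is the orbit-distinctness. It rests on the earlier argument that $\ell(R)$ divides the dimension of $\Omega^j_S(R)$ and of $R$ but not of $\Omega^j_S\Omega^{i}_R(k)$ or $k$. That argument has already been given, so we simply invoke the theorem.
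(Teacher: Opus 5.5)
Your argument is correct and is the deduction the paper intends; the corollary is stated without proof right after the pairwise non-isomorphism theorem. You iterate the tensor formula from the proof of \Cref{6.1} to find the non-projective indecomposable summand $\Omega^{nj}_S\Omega^{ni}_R(k)$ of $M^{\otimes n}$, then use pairwise non-isomorphism together with $i\neq 0$ to place these summands in infinitely many distinct $\Omega_S$-orbits.

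One small correction: the non-projectivity of $\Omega^{nj}_S\Omega^{ni}_R(k)$ over $S$ does not depend on $ni\neq 0$. It holds because $\Omega^{ni}_R(k)$ is a nonzero $R$-module, hence not $S$-free, and $\Omega_S^{\pm}$ preserve non-projectivity over the self-injective algebra $S$.
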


Since $\Omega^j_S\Omega^i_R(k)$ is not $\Omega$-algebraic, the direct summand of such modules may have transcendental core series. 

Now we compute the following example to show the core series can indeed be transcendental. Let $H=V_4=\mathbb{Z}/2\times \mathbb{Z}/2$ and $G=\mathbb{Z}/4\times \mathbb{Z}/4$. In this case, $\Omega^i_R(k)$ is a $P=k[t_1,t_2]$-module, which has 3 nonzero Betti numbers. Since it is torsion over $P$, its alternating sum of Betti numbers over $P$ is $0$.

$$\beta^P_0(\Omega^i_R(k))=\beta^R_0(\Omega^i_R(k)),\beta^P_2(\Omega^i_R(k))=\ell(\operatorname{soc}\Omega^i_R(k))=\beta^R_0(\Omega^{-i}_R(k)).$$
Therefore, we get
$$\beta^P_0(\Omega^i_R(k))=i+1,\beta^P_1(\Omega^i_R(k))=2i+1,\beta^P_2(\Omega^i_R(k))=i,i \geq 1$$
and
$$\beta^P_0(\Omega^{-i}_R(k))=i,\beta^P_1(\Omega^{-i}_R(k))=2i+1,\beta^P_2(\Omega^{-i}_R(k))=i+1,i \geq 1.$$
When $i=0$, we get
$$\beta^P_0(k)=1,\beta^P_1(k)=2,\beta^P_2(k)=1.$$
Now we compute some series as below. Note that $S$ is $P$ modulo a regular sequence which falls into $(t_1,t_2)\operatorname{ann}_P(\Omega^i_R(k))$. Therefore,
$$\sum_{j \geq 0}\beta^S_j(\Omega^i_R(k))z^j=\sum_{j \geq 0}\beta^S_j(\Omega^i_R(k))z^j\cdot \frac{1}{(1-z^2)^2}$$
and
$$\sum_{j \geq 0}\gamma^S_j(\Omega^i_R(k))z^j=\sum_{j \geq 0}\beta^S_j(\Omega^i_R(k))z^j\cdot \frac{1}{(1-z^2)^2(1+z)}$$
and
\begin{align*}
\ell(\Omega^j_S\Omega^i_R(k))z^j=\sum_{j \geq 0}(\ell(S)\gamma^S_{j-1}(\Omega^i_R(k))+(-1)^j\ell(\Omega^i_R(k)))z^j\\
=\frac{\ell(S)z\cdot\sum_{j \geq 0}\beta^S_j(\Omega^i_R(k))z^j}{(1-z^2)^2(1+z)}+\ell(\Omega^i_R(k))\cdot\frac{1}{1+z}.
\end{align*}
Set $$\ell(\Omega^j_S\Omega^i_R(k))=a_{ij},i,j \in \mathbb{Z}.$$
Then
$$\sum_{j \geq 0}a_{ij}z^j=\begin{cases}
\frac{16z(i+1+iz)}{(1-z^2)^2}+\frac{2|i|+1}{1+z}    & i\geq 1\\
\frac{16z(1+z)}{(1-z^2)^2}+\frac{1}{1+z}     & i=0\\
\frac{16z(|i|+(1+|i|)z)}{(1-z^2)^2}+\frac{2|i|+1}{1+z}     & i\leq -1.\\
\end{cases}$$
We also have
$$\Omega^j_S\Omega^i_R(k)^*=\Omega^{-j}_S\Omega^{-i}_R(k),a_{-i,-j}=a_{ij}.$$
This gives $\ell(\Omega^j_S\Omega^i_R(k))$ for $i,j \in \mathbb{Z}$. 

Then, from the fact that $\Omega^m_S(M)\otimes \Omega^n_S(N)=\Omega^{m+n}_S(M\otimes N)$ in the stable category, we see
$$P_1(\Omega_S)(M)\otimes P_2(\Omega_S)(N)=P_1P_2(\Omega_S)(M\otimes N)$$
for Laurent polynomials $P_1,P_2 \in \mathbb{N}[u,u^{-1}]$. In particular,
$$P(\Omega_S)(M)^{\otimes n}=P^n(\Omega_S)(M^{\otimes n}).$$
Let $N=\Omega^1_R(k)\oplus \Omega^{-1}_R(k)$. Then
$$N^{\otimes n}=\sum_{0 \leq \alpha \leq n}\Omega^{n-2\alpha}_R(k)^{\oplus {n\choose \alpha}}\oplus R^{s_n}$$
where 
$$s_n=\frac{1}{4}(6^n-c^H_n(N)).$$
Here
$$c^H_n(N)=2^n+2n{n\choose n/2}$$
for $n$ even and
$$c^H_n(N)=2^n+4n{n-1\choose (n-1)/2}$$
for $n$ odd.

Let $M=\Omega^1_S(N)\oplus \Omega^{-1}_S(N)$. Then in the stable module category over $S$ we have
\begin{align*}
M^{\otimes n}=\sum_{0 \leq \delta \leq n}\Omega^{n-2\delta}_S(\sum_{0 \leq \alpha \leq n}\Omega^{n-2\alpha}_R(k)^{\oplus {n\choose \alpha}}\oplus R^{s_n})^{\oplus {n\choose \delta}}\\
=\sum_{0 \leq \alpha,\delta \leq n}\Omega^{n-2\delta}_S\Omega^{n-2\alpha}_R(k)^{\oplus {n\choose \alpha}{n\choose \delta}}\oplus\sum_{0 \leq \delta \leq n}\Omega^{n-2\delta}_S(R)^{\oplus{n\choose \delta}s_n}.
\end{align*}
To compute the core series of $M$, we also need to compute the lengths of $\Omega^i_S(R)$. Instead of direct computation, we observe that there is an exact sequence
$$1 \to H \to G \to H \to 1,$$
and therefore as $S$-modules, $R \cong k\uparrow_{H}^G$, so
$$\Omega^i_S(R)\cong \Omega^i_R(k)\uparrow_{H}^G$$
and
$$\ell(\Omega^i_S(R))=[G:H]\ell(\Omega^i_R(k))=4(2|i|+1).$$
In sum, we have
$$c^G_n(M)=\sum_{0 \leq \alpha,\delta \leq n}{n\choose \delta}{n \choose \alpha}a_{n-2\alpha,n-2\delta}+(\sum_{0 \leq \delta \leq n}4{n\choose \delta}(2|n-2\delta|+1))\cdot s_n.$$
The rest part is just computation. Denote
$$A=\sum_{0 \leq \delta \leq n}{n \choose \delta}|n-2\delta|=\begin{cases}
n{n\choose n/2} & n \textup{ even}\\
(n+1){n\choose (n-1)/2} & n \textup{ odd,}
\end{cases}$$
$$B=\sum_{0 \leq \delta \leq n}{n\choose \delta}=2^n.$$
Note that
$$(\sum_{0 \leq \delta \leq n}4{n\choose \delta}(2|n-2\delta|+1))=4(2A+B)$$
$$c^H_n(N)=B+2A.$$
If $n$ is odd, so is $j=n-2\delta,i=n-2\alpha$. Therefore, we can verify
$$a_{ij}=\begin{cases}
8(|i|+1)(|j|+1)-(2|i|+1) & ij>0 \textup{ odd}\\
=8|ij|+8|j|+6|i|+7\\
8(|i|)(|j|+1)-(2|i|+1) & ij<0 \textup{ odd}\\
=8|ij|+6|i|-1
\end{cases}$$
or
$$a_{ij}=8|ij|+6|i|-1+\mathbf{1}_{ij>0}(8|j|+8).$$
So
$$\sum_{0 \leq \alpha,\delta \leq n}{n\choose \delta}{n \choose \alpha}a_{n-2\alpha,n-2\delta}=8A^2+6AB-B^2+\frac{1}{2}(8AB+8B^2)=8A^2+10AB+3B^2.$$
So
\begin{align*}
c^G_n(M)=8A^2+10AB+3B^2+4(2A+B)\cdot \frac{1}{4}(6^n-B-2A)\\
=4A^2+6AB+2B^2+6^n(2A+B).
\end{align*}
If $n$ is even, so is $j=n-2\delta,i=n-2\alpha$. Set
$$C={n\choose n/2}.$$
When $i,j$ are both even, we can verify
$$a_{ij}=\begin{cases}
8|i||j|+2|i|+1 & i>0,j\geq 0 \textup{ or }i<0,j\leq 0\\
8|i||j|+8|j|+2|i|+1 & i\leq 0,j \geq 0 \textup{ or } i \geq 0,j \leq 0
\end{cases}$$
or
$$a_{ij}=8|i||j|+8|j|+2|i|+1-\mathbf{1}_{(ij> 0)  }\cdot8|j|.$$
We have
\begin{align*}
\sum_{0 \leq \alpha,\delta \leq n, (n-2\delta)(n-2\alpha)> 0}{n\choose \delta}{n \choose \alpha}|n-2\delta|\\
=\frac{1}{2}\sum_{0 \leq \alpha,\delta \leq n, (n-2\delta)(n-2\alpha)\neq 0}{n\choose \delta}{n \choose \alpha}|n-2\delta|\\
=\frac{1}{2}\sum_{0 \leq \alpha,\delta \leq n, (n-2\alpha)\neq 0}{n\choose \delta}{n \choose \alpha}|n-2\delta|\\
=(B-C)\cdot\frac{1}{2}\sum_{0 \leq \delta \leq n}{n\choose \delta}|n-2\delta|\\
=\frac{1}{2}A(B-C).
\end{align*}
So
$$\sum_{0 \leq \alpha,\delta \leq n}{n\choose \delta}{n \choose \alpha}a_{n-2\alpha,n-2\delta}=8A^2+10AB+B^2-4A(B-C)=8A^2+6AB+B^2+4AC,$$
\begin{align*}
c^G_n(M)=8A^2+6AB+B^2+4AC+4(2A+B)\cdot \frac{1}{4}(6^n-B-2A)\\
=4A^2+2AB+4AC+6^n(2A+B).
\end{align*}
Here we see $A,C$ are two combinatorial numbers and $B$ is a power, so the generating function of $AB,B^2,6^nA,6^nB$ are all algebraic. However, the generating function of $A^2$ is transcendental. Actually the generating function of $A^2+AC$ is also transcendental since $C=\frac{1}{n}A$ when $n$ is even, and they do not cancel asymptotically.

One way of proving this fact is to check the asymptotic behavior of $4A^2$. The Stirling series says
$$n!\sim \sqrt{2\pi n}(\frac{n}{e})^n(1+\frac{1}{12n}+\frac{1}{288n^2}-\frac{139}{51840n^3}+O(n^{-4}))$$
which gives
$${2n\choose n}^2=\frac{16^n}{\pi n}(1-\frac{1}{4n}+\frac{1}{32n^2}+O(n^{-3})),$$
so for some $c_1,c_2,c_3,\alpha$ we will have
$$4A^2-c_1n\alpha^n-c_2\alpha^n=c_3\alpha^nn^{-1}+O(\alpha^nn^{-2}).$$ 
However, if a series $f(z)=\sum_{n \geq 0}a_nz^n$ is algebraic and has a singularity at $z=1/\alpha$, then near $1/\alpha$ it can be expanded as a Puiseux series
$$f(z)=\sum_{j \geq m}(1-\alpha z)^{j/d}$$
which gives
$$a_n\sim Cn^{\beta}\alpha^n,\beta \notin \mathbb{Z}_{<-1},$$
and if 
$$a_n\sim Cn^{-1}\alpha^n$$
this would give us $f(z) \sim -C\log(1-\alpha z)$, which is not algebraic. Relevant discussions can be seen in \cite{TRANS}.

Therefore, the generating function of $4A^2-c_1n\alpha^n-c_2\alpha^n$ is transcendental over $\mathbb{C}(z)$. And the generating function of $c_1n\alpha^n+c_2\alpha^n$ is rational over $\mathbb{C}(z)$. Therefore, the generating function of $4A^2$ is transcendental. We can prove the transcendence of the generating function of $4A^2+4AC$ in the same way.
\begin{theorem}\label{6.4}
Let
$$M=\Omega_S\Omega_R(k)\oplus \Omega_S\Omega^{-1}_R(k)\oplus\Omega_S^{-1}\Omega_R(k)\oplus \Omega_S^{-1}\Omega^{-1}_R(k).$$
Then the core series of $M$ is transcendental.
\end{theorem}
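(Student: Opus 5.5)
By additivity of $\Omega_S$ and $\Omega_S^{-1}$, the module $M$ in the statement equals $\Omega^1_S(N)\oplus\Omega^{-1}_S(N)$ with $N=\Omega^1_R(k)\oplus\Omega^{-1}_R(k)$. This is exactly the module whose core dimensions were computed just before the statement. So the plan is to start from the closed formulas obtained there:
\[
c^G_n(M)=\begin{cases}4A^2+6AB+2B^2+6^n(2A+B) & n\text{ odd},\\ 4A^2+2AB+4AC+6^n(2A+B) & n\text{ even},\end{cases}
\]
with $B=2^n$, $C={n\choose n/2}$, and $A=nC$ for even $n$.

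\textbf{Step 1: reduce to the even part.} Suppose $c_M(z)$ were algebraic over $\mathbb{C}(z)$. Then its even part
\[
\tfrac12\bigl(c_M(z)+c_M(-z)\bigr)=\sum_{m\ge0}c^G_{2m}(M)z^{2m}
\]
is also algebraic, and after substituting $w=z^2$ so is $E(w)=\sum_m c^G_{2m}(M)w^m$.

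\textbf{Step 2: strip off the algebraic pieces.} Write $n=2m$ and $C_m={2m\choose m}$. The sequences $AB=2m\,4^mC_m$, $B^2=16^m$, $6^{2m}A=2m\,36^mC_m$ and $6^{2m}B=144^m$ all have algebraic generating functions in $w$. Indeed, $\sum_m C_mw^m=(1-4w)^{-1/2}$, and rescaling $w$ and applying $w\frac{d}{dw}$ preserve algebraicity. Subtracting these from $E(w)$, it follows that
\[
g(w)=\sum_m (4A^2+4AC)w^m=\sum_m (16m^2+8m)C_m^2w^m
\]
would be algebraic.

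\textbf{Step 3: asymptotic contradiction; this is the main obstacle.} Using the expansion ${2m\choose m}^2=\frac{16^m}{\pi m}\bigl(1-\frac1{4m}+\frac1{32m^2}+O(m^{-3})\bigr)$ already recorded in the paper, I would write the coefficients as
\[
[w^m]g=16^m\bigl(c_1m+c_2+c_3m^{-1}+O(m^{-2})\bigr).
\]
Then I would compute $c_3$ explicitly:
\[
\frac1\pi\Bigl(16\cdot\tfrac1{32}+8\cdot(-\tfrac14)\Bigr)=-\frac{3}{2\pi}\neq0.
\]
Subtracting the rational generating function of $16^m(c_1m+c_2)$ leaves a series whose coefficients are $c_316^mm^{-1}+O(16^mm^{-2})$.

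For an algebraic series, each singularity on the circle of convergence has a Puiseux expansion there. The transfer theorems (Flajolet--Sedgewick; cf.~\cite{TRANS}) then give coefficient asymptotics that are finite sums of terms $\kappa\,\zeta^{-m}m^{\beta}$, where $\zeta$ ranges over the dominant singularities and $\beta\notin\mathbb{Z}_{<0}$. The step needing care is the possible presence of several dominant singularities, which could produce oscillating terms. I would handle this by noting that a nonzero sum $\sum_j\kappa_j\zeta_j^{-m}m^{\beta}$ with distinct $\zeta_j$ on one circle cannot be $o(m^{\beta})$ along all $m$. Comparing the leading exponent with $m^{-1}$ and a non-oscillating coefficient $c_3\neq0$ then yields a contradiction: any algebraic contribution at exponent $-1$ would be forced, but $-1$ is not an allowed exponent, and its leading behaviour would be logarithmic, $-c_3\log(1-16w)$.

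If this transfer argument becomes delicate, I would fall back on the differential-equation viewpoint. Let $F(w)=\sum_m C_m^2w^m$, a complete elliptic integral, which is classically transcendental with a logarithmic singularity at $w=1/16$. Then $g=(16\theta^2+8\theta)F$ with $\theta=w\frac{d}{dw}$. I would argue that the logarithmic term $\log(1-16w)$ in the local expansion of $F$ at $1/16$ survives this operator with a nonzero coefficient, so $g$ has a logarithmic singularity, which is impossible for an algebraic function. Either way $g$, hence $E$, hence $c_M(z)$, is transcendental.
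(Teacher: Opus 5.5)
Your proposal is correct and follows essentially the same route as the paper. You start from the closed formulas for $c^G_n(M)$, discard the pieces $AB$, $B^2$, $6^nA$, $6^nB$ whose generating functions are algebraic, and rule out algebraicity of the remaining $A^2$-type series because its coefficients carry a nonvanishing $16^m m^{-1}$ term, which no algebraic function can produce. You are more explicit than the paper in three places: you isolate the even-index part via $\tfrac12(c_M(z)+c_M(-z))$, you compute the offending coefficient $c_3=-\tfrac{3}{2\pi}\neq0$ (which justifies the paper's remark that $A^2$ and $AC$ do not cancel), and you handle the possibility of several dominant singularities.
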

\begin{remark}
The transcendence here comes from the fact that the Hadamard product of two algebraic series is not always algebraic-the generating function of $A$ is algebraic while the generating function of $A^2$ is not. One set of series which contains algebraic series and is closed under Hadamard product is the $D$-finite series, defined as below:

A series $f(z)$ is $D$-finite if it satisfies a differenial equation
$$P_r(z)f^{(r)}(z)+\ldots+P_1(z)f^{(1)}(z)+P_0(z)f(z)=0.$$
As a particular case, for
$$M=\Omega_S\Omega_R(k)\oplus \Omega_S\Omega^{-1}_R(k)\oplus\Omega_S^{-1}\Omega_R(k)\oplus \Omega_S^{-1}\Omega^{-1}_R(k),$$
its core series is $D$-finite. We may ask the following questions.
\end{remark}
\begin{question}
Let $M$ be a direct sum of iterated syzygies $\Omega_S^j\Omega^i_R(k)$ for any finite abelian $p$-group surjections $G \twoheadrightarrow H$. Is the core series of $M$ always $D$-finite?  
\end{question}
We may further consider syzygies iterated multiple times, that is, modules of the form
$$\Omega^{i_s}_{R_s}\Omega^{i_{s-1}}_{R_{s-1}}\ldots \Omega^{i_1}_{R_1}(k)$$
for a chain of surjection of groups
$$G_s \twoheadrightarrow G_{s-1}\twoheadrightarrow\ldots\twoheadrightarrow G_1.$$
\begin{question}
Is there a core series which is not $D$-finite?    
\end{question}
\section*{Acknowledgement}
The author would express his sincere thanks to Dave Benson for his valuable suggestions, which inspired the the calculations in Chapter 5. The author would like to thank Srikanth Iyengar and Josh Pollitz for helpful discussions.
\bibliographystyle{plain}
\bibliography{reference}
\end{sloppypar}
\end{document}